\crefname{hypothesis}{Hypothesis}{Hypotheses}
\Crefname{ALC@unique}{Line}{Lines}
\colorlet{texcscolor}{blue!50!black}
\colorlet{texemcolor}{red!70!black}
\colorlet{texpreamble}{red!70!black}
\colorlet{codebackground}{black!25!white!25}
\title{FAST HIERARCHICAL SOLVERS FOR SPARSE MATRICES USING EXTENDED SPARSIFICATION AND LOW-RANK APPROXIMATION%
  \thanks{Funding from the ``Army High Performance Computing Research Center'' (AHPCRC), sponsored by the U.S. Army Research Laboratory under contract No. W911NF-07-2-0027, at Stanford, supported in part this research. The second author is a post-doctoral fellow of the Research Foundation Flanders (FWO) and a Francqui Foundation fellow of the Belgian American Educational Foundation (BAEF). The financial support is gratefully acknowledged.}}
\author{HADI POURANSARI\footnotemark[2]\and PIETER COULIER\footnotemark[2] \footnotemark[3] \and ERIC DARVE\footnotemark[2] \footnotemark[4]}
\begin{document}
\maketitle

\renewcommand{\thefootnote}{\fnsymbol{footnote}}

\footnotetext[2]{Stanford University, Department of Mechanical Engineering, Stanford, CA 94305, USA (\email{[hadip,pcoulier,darve]@stanford.edu}).}
\footnotetext[3]{KU Leuven, Department of Civil Engineering, Kasteelpark Arenberg 40, 3001 Leuven, Belgium.}
\footnotetext[4]{Stanford University, Institute for Computational and Mathematical Engineering, Stanford, CA 94305, USA.}

\renewcommand{\thefootnote}{\arabic{footnote}}

\newcommand{\red}[1]{\textcolor{black}{#1}}
\newcommand{\blue}[1]{\textcolor{black}{#1}}
\newcommand{\orange}[1]{\textcolor{black}{#1}}

\newcommand{\x}{\texttt{Var}}
\newcommand{\rhs}{\texttt{RHS}}
\newcommand{\mat}{\texttt{Mat}}

\begin{abstract}
 Inversion of sparse matrices with standard direct solve schemes \red{ is robust, but computationally expensive}. Iterative solvers, on the other hand, demonstrate better scalability, \red{but need to be used with an appropriate preconditioner (e.g.,  ILU, AMG, Gauss-Seidel, etc.) for proper convergence.} The choice of an effective preconditioner is highly problem dependent. We propose a novel fully algebraic sparse matrix solve algorithm, which has linear complexity with the problem size. Our scheme is based on the Gauss elimination. For a given matrix, we approximate the LU factorization with a tunable accuracy determined a priori. This method can be used as a stand-alone direct solver with linear complexity and tunable accuracy, or it can be used as a black-box preconditioner in conjunction with iterative methods such as GMRES.
The proposed solver is based on the low-rank approximation of fill-ins generated during the elimination. Similar to $\mathcal{H}$-matrices, fill-ins corresponding to blocks that are well-separated in the adjacency graph are represented via a hierarchical structure. \red{The linear complexity of the algorithm is guaranteed if the blocks corresponding to well-separated clusters of variables are numerically low-rank.}
\end{abstract}

\begin{keywords}
sparse, hierarchical, low-rank, elimination, compression
\end{keywords}

\slugger{sisc}{xxxx}{xx}{x}{x--x}

\begin{AMS}
65F05, 65F08, 65F50, 65N55, 68Q25
\end{AMS}


\section {Introduction}
In the realm of scientific computing, solving a sparse linear system,
\begin{equation}
A {\boldsymbol{x}} = {\boldsymbol{b}},
\label{eqn:linsys}
\end{equation}
is known to be one of the challenging parts of \red{many} calculations, and is \red{often} the main bottleneck. Such a system of equations may be the result of the discretization of some partial differential equation (PDE), or more generally, can represent the local interactions of units in a network.

Solving a system of equations of size $n$ using a naive implementation of Gauss elimination has $\mathcal{O}(n^{3})$ time complexity. The best proved time complexity to solve a general linear system is $\mathcal{O}(n^{\omega})$, where $\omega<2.376$ \cite{bunch1974triangular,coppersmith1987matrix,ibarra1982generalization}. In the case of sparse matrices, the time and memory complexity can be reduced when a proper elimination order is employed. Finding the optimal ordering (that results in the minimum number of new non-zeros in the LU factorization) is known to be an NP-complete problem \cite{yannakakis1981computing}. For matrices resulting from the discretization of some PDE in physical space, nested dissection \cite{george1973nested, lipton1979generalized} is known as an efficient elimination strategy. \cite{alon2010solving} discusses the complexity of nested dissection based on the sparsity pattern of the matrix. For a three-dimensional problem, the time and memory complexities are expected to be $\mathcal{O}(n^2)$ and $\mathcal{O}(n^{4/3})$, respectively, when nested dissection is employed. As the size of the problem grows, such complexities make direct solvers prohibitive.

Iterative methods, such as conjugate gradient \cite{hestenes1952methods}, minimum residual \cite{paige1975solution}, and general minimum residual \cite{saad1986gmres}, are generally more time and memory efficient. In addition, iterative solvers such as those based on Krylov subspace can be accelerated using fast linear algebra techniques. The fast multipole method (FMM) \cite{darve2000fast,fong2009black,greengard1987fast, nishimura2002fast,pouransari2015optimizing,ying2004kernel}, for example, can accelerate matrix-vector multiplication ---from quadratic complexity to linear---, which is the bulk calculation in iterative solvers based on Krylov subspace. However, in practice, iterative methods need to be used in conjunction with preconditioners to limit the number of iterations. The choice of an efficient preconditioner is highly problem dependent. There are many ongoing efforts to develop preconditioners that are optimized for particular applications. Hence, there is a need for general purpose preconditioners. Hierarchical matrices enable us to develop such preconditioners.

FMM matrices are a subclass of a larger category of matrices called hierarchical matrices ($\mathcal{H}$-matrices) \cite{bebendorf2008hierarchical,borm2003introduction,hackbusch2002h2}. $\mathcal{H}$-matrices have a hierarchical low-rank structure. For instance, in a hierarchically off-diagonal low-rank (HODLR) matrix \red{\cite{aminfar2016fast}}, off-diagonal blocks can be represented through a hierarchy of low-rank interactions. If the bases used in the hierarchy are nested (i.e., the low-rank basis at each level is constructed using the low-rank basis of the child level) the method is called hierarchically semi-separable (HSS) \cite{ambikasaran2013mathcal,chandrasekaran2006fast, xia2010fast}. In a more general case of hierarchical matrices, more complex low-rank structures can be considered. A full\orange{-rank} dense matrix with many low-rank structures is in fact data-sparse \cite{hackbusch1999sparse,hackbusch2000sparse}. A data-sparse matrix can be represented  via an extended sparse matrix, \orange{which has extra $\mathcal{O}(n)$ rows/columns, but with only few non-zero entries} \cite{ambikasaran2013fast}. The hierarchical structure of such matrices can be used for efficient calculation and storage. 

\red{Recently, hierarchical interpolative factorization \cite{ho2015hierarchicalDiff, ho2015hierarchicalInt} was proposed, which can be used to directly solve systems obtained from differential and integral equations based on elliptic operators. The fast factorization is obtained by skeletonization of fronts in the multifrontal scheme. Using low-rank structure of the off-diagonal blocks to develop fast direct solvers for linear systems arising from integral equations has been widely studied \cite{corona2015n,gillman2012direct,greengard2009fast,kong2011adaptive}. \cite{gillman2014direct} proposed a direct solver for \orange{elliptic} PDEs with variable coefficients on two-dimensional domains by exploiting internal low-rank structures in the matrices. \cite{napov2016algebraic} used hierarchical low-rank structures of the off-diagonal blocks to introduce a preconditioner for sparse matrices based on a multifrontal variant of sparse LU factorization. \cite{oseledets2012solution} introduced a black-box linear solver using tensor-train format. \cite{li2013divide} used a recursive low-rank approximation algorithm based on the Sherman-Morrison formula to obtain a preconditioner for symmetric sparse matrices.}

Sparse matrices can be considered as a very special case of hierarchical matrices, where instead of low-rank blocks they initially have zero blocks. However, during the elimination process in a direct solve scheme, many of the zero blocks get filled. For a large category of matrices, including those obtained from the discretization of PDEs, most of the new fill-ins are \red{numerically} low-rank. This is justified when the Green's function associated to the PDE is smooth \blue{(non-oscillatory)}. In this paper, we will use the $\mathcal{H}$-matrix structure to compress the fill-ins. A similar process can be applied in the elimination of an extended sparse matrix resulting from an originally dense matrix \cite{ambikasaran2014inverse, coulier2015inverse}. This reduces the complexity of the direct solver to linear. \red{The linear complexity of the method is guaranteed if the blocks corresponding to the interaction of well-separated nodes are numerically low-rank. We define the well-separated condition in \cref{sec:HTree}.}

\red{The proposed algorithm can be considered as} an extension to the block incomplete LU (ILU) \cite{saad1994ilut} preconditioners. In a block ILU factorization, most of the new fill-ins (i.e., blocks that are created during the elimination process which are originally zero) are ignored, and therefore, the block sparsity of the matrix is preserved, while the accuracy is not. In the proposed algorithm, instead, we use low-rank approximations to compress new fill-ins. Using a tree structure, new fill-ins at the fine level are compressed and pushed to the parent (coarse) level. The elimination \red{and} compression \red{processes are} done in a bottom-to-top traversal. 

In addition, the proposed algorithm has formal similarities with algebraic multi-grid (AMG) methods \cite{brandt1986algebraic, brandt1985algebraic,ruge1987algebraic, stuben2001review}. However, the two methods differ in the way they build the coarse system, and use restriction and prolongation operators. In AMG, the original system is solved at different levels (from fine to coarse). Here, the compressed fill-ins ---corresponding to the Schur complements--- of each level are solved at the coarser level above. Note that the proposed algorithm is purely algebraic, similar to AMG. If the matrix comes from discretization of a PDE on a physical grid, the grid information can be exploited to improve the performance of the solver, similar to geometric multi-grid.

The algorithm presented in this paper computes \blue{a hierarchical representation of the LU factorization of a sparse matrix using low-rank approximations}. 
\red{We introduce intermediate operations to compress new fill-ins. The compressed fill-ins are represented using a set of extra variables. This technique is known as extended sparsification \cite{chandrasekaran2006fast2}.}
The accuracy of the factorization phase \red{(i.e., Gauss elimination and compression)}, $\epsilon$, can be determined a priori. The time and memory complexity of the factorization are $\mathcal{O} \left(n\log^2 {1}/{\epsilon} \right)$ and $\mathcal{O} \left( n \log {1}/{\epsilon}\right)$, respectively, as will be clarified in \cref{sec:standalone}.

\red{The method presented in this paper is similar to the fast hierarchical methods developed by Hackbusch et al. \cite{borm2003introduction,hackbusch1999sparse,hackbusch2002h2,hackbusch2000sparse} in a sense that both methods use a tree decomposition to identify and represent low-rank blocks. The key difference, however, is that in the Hackbusch's algorithm the LU factorization is computed using a depth-first tree traversal order, whereas here we use a breadth-first (level by level from leaf to root) traversal. The connection and differences of the proposed algorithm and Hackbusch's fast $\mathcal{H}$-algebra is discussed in our companion paper \cite{coulier2015inverse}, in which a similar method is used for dense matrix factorization.}

Our solver can be used as a stand-alone direct solver with tunable accuracy. The factorization part is completely separate from the solve part and is generally more expensive. This makes the algorithm appealing \red{when multiple right hand sides are available (e.g., using the proposed solver as a black-box preconditioner in an iterative method)}. We have implemented the algorithm in C++ (the code can be downloaded from \url{bitbucket.org/hadip/lorasp}), and benchmarked it as both a stand-alone solver (see \cref{sec:standalone}), and a preconditioner in conjunction with the generalized minimum residual (GMRES) iterative solver \cite{saad1986gmres} (see \cref{sec:precond}).

Furthermore, the proposed algorithm has interesting parallelization properties. On one hand, all calculations are block matrix computations which can be highly accelerated using BLAS3 operations \cite{anderson1999lapack}. On the other hand, since the sparsity pattern at every level is preserved, the data dependency is very local, which is an interesting property to reduce the amount of communications. In addition, the amount of calculation scales with the third power of the size of blocks, while the communications scales with the second power of block sizes. This helps with the concurrency of the parallel implementation. Moreover, the order of elimination does not change the complexity of the presented algorithm. This is in particular an appealing property for parallel implementation. \red{The parallel implementation of the proposed method is not further discussed in this paper.}

The remainder of this paper is organized as follows. In \cref{sec:SLS} we briefly introduce a graph representation of sparse matrices, and an interpretation of the Gauss elimination using the adjacency graph. In \cref{sec:HTree} some concepts related to the hierarchical representation of matrices are defined. The algorithm is explained in \cref{sec:alg} in detail, and the linear complexity analysis is provided in \cref{sec:lin}. We present numerical results obtained from various benchmarks in \cref{sec:numerical}. There are many avenues for optimization and extension of the algorithm. We discuss some of these opportunities in \cref{sec:conclusion}.

\section {Sparse linear systems} \label{sec:SLS}
In this section we briefly introduce the graphical framework that is required in the rest of the paper. We assume a sparse linear system \red{of size $n$} as in \cref{eqn:linsys} is given.

\subsection{Adjacency graph} \label{sec:adjgraph}
In many algorithms, including the method proposed in this paper, it is necessary (or more efficient) to operate on sub-blocks of the matrix rather than single elements.
\red{The blocks of the matrix can be identified using a partitioning as defined below.}
\red{\begin{definition} (partitioning) 
A partitioning $\mathcal{P}$ is defined as a \blue{surjective} map $\{1,  \ldots, n\} \to \{1, \ldots, \blue{n_{\mathcal{P}}}\}$. 
\blue{$\mathcal{P}$ groups rows/columns of $A$ into $n_{\mathcal{P}}$ clusters, $C_j^{\mathcal{P}} := \{k \in \{1, \ldots, n \} | \mathcal{P}(k) = j\}$ for $1 \le j \le n_{\mathcal{P}}$.}
\end{definition}}

\blue{We \orange{denote}  an entry of matrix $A$ located in row $k$ and column $t$ by $A_{[k,t]}$. For $1 \le i,j \le n_{\mathcal{P}}$, we use $A_{i,j}$ to represent a sub-matrix formed by concatenating all entries $A_{[k,t]}$ such that $k \in C_i^{\mathcal{P}}$ and $t \in C_j^{\mathcal{P}}$. Additionally, for a vector $\boldsymbol{x}$ of size $n$, we use $\boldsymbol{x}_i$ to represent a sub-vector formed by concatenating all $\boldsymbol{x}_{[k]}$ entries such that $k \in C_i^{\mathcal{P}}$.}

It is often fruitful to represent sparse \blue{matrices} using graphs. \red{An adjacency graph, as defined below, represents a sparse matrix with partitioning.}
\red{\begin{definition} (adjacency graph)  \label{def:adjGraph}
A sparse matrix $A$ with a partitioning $\mathcal{P}$ can be represented by \blue{its adjacency} graph $G(V,E)$, where $V = \{v_1, \ldots, \blue{v_{n_{\mathcal{P}}}} \}$. \blue{Each $v_i \in V$  for $1 \le i \le \blue{n_{\mathcal{P}}}$ represents a cluster $C_i^{\mathcal{P}}$ of rows and columns of $A$.}
A vertex $v_i$ is connected to a vertex $v_j$ by a directed edge $e_{v_i \to v_j} \in E$ if and only if the block $A_{j,i}$ in $A$ is non-zero.\footnote{\red{The adjacency graph of a matrix $A$ with a partitioning $\mathcal{P}$ is essentially the \emph{quotient graph} of the adjacency graph of matrix $A$ with identity partitioning, where the equivalence relation is induced by partitioning $\mathcal{P}$.}}
\end{definition}}

\red{In \cref{fig:domain}, an example of the adjacency graph is illustrated. In the rest of the paper} \blue{we use vertex and node interchangeably for the elements of $V$ in the adjacency graph.}

\blue{The linear system in \cref{eqn:linsys} can also be represented using the adjacency graph of $A$, $G(V,E)$.
For a node $v_i \in V$, $\x(v_i) = \boldsymbol{x}_i$ denotes the vector of variables corresponding to cluster $C_i^{\mathcal{P}}$. Similarly, $\rhs(v_i) = \boldsymbol{b}_i$ denotes the vector of right hand sides corresponding to cluster $C_i^{\mathcal{P}}$. Also for an edge $e_{v_i \to v_j} \in E$, $\mat(e_{v_i \to v_j}) = A_{j,i}$ denotes the sub-matrix corresponding to cluster $C_i^{\mathcal{P}}$ of columns and cluster $C_j^{\mathcal{P}}$ of rows. For the example shown in \cref{fig:domain}, the following two notations represent the \orange{same} set of equations corresponding to the node $v_2$ \orange{and its incoming edges}.}
\blue{
\begin{subequations}
\begin{equation}
A_{2,1} \boldsymbol{x}_1 + A_{2,2} \boldsymbol{x}_2 + A_{2,3} \boldsymbol{x}_3 + A_{2,4} \boldsymbol{x}_4 = \boldsymbol{b}_2
\end{equation}
\begin{equation}
\begin{split}
\mat(e_{v_1 \to v_2}) \cdot \x (v_1) + \mat(e_{v_2 \to v_2}) \cdot \x (v_2) +\\ \mat(e_{v_3 \to v_2}) \cdot \x (v_3) + \mat(e_{v_4 \to v_2}) \cdot \x (v_4) &= \rhs (v_2)
\end{split}
\end{equation}
\end{subequations}
}

\begin{figure}[htbp] \centering
\includegraphics[width=0.7\textwidth]{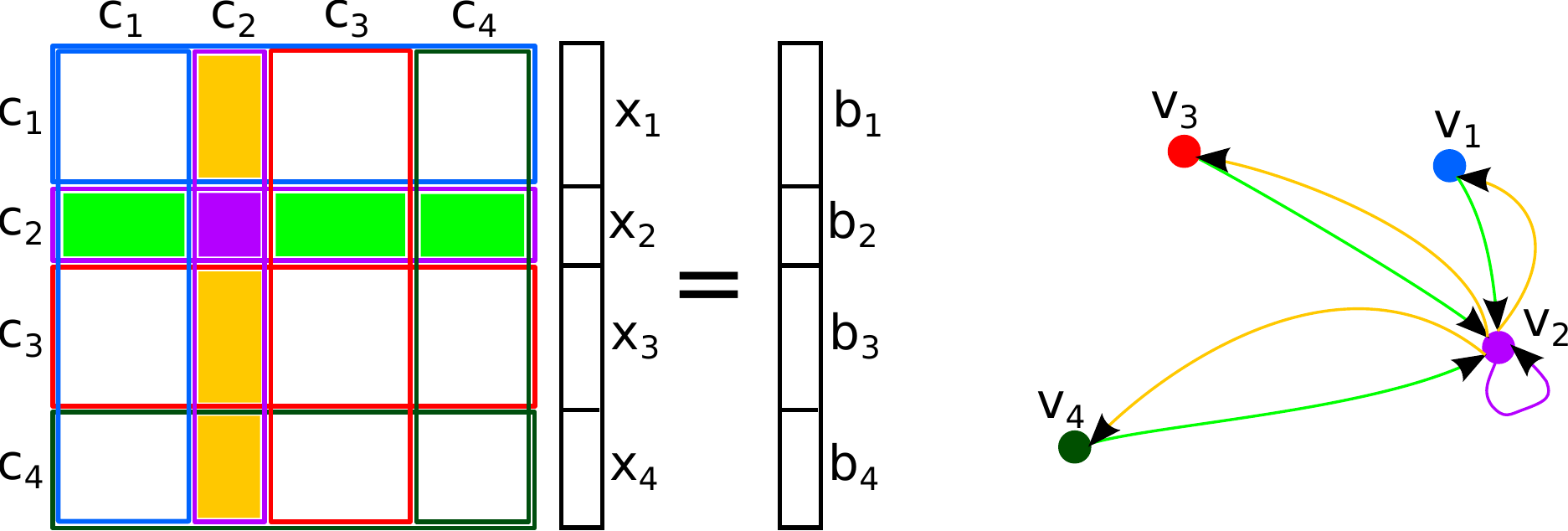}
\caption{\blue{Example of the adjacency graph (right) of a matrix (left). Vertices' colors are the same as their corresponding cluster of rows/columns in the matrix. Edges' colors are also in correspondence with the sub-blocks in the matrix.}}
\label{fig:domain} 
\end{figure}

\subsection{Elimination} \label{sec:elimination}
The \blue{block} Gauss elimination process, or \blue{block} LU factorization, can also be explained using the graph representation of the matrix. At \blue{step $i$} of the elimination process, a set of unknowns, \blue{$\x(v_i)$}, is eliminated from the system of equations. This corresponds to eliminating vertex $v_i$ from the adjacency graph $G(V,E)$. The self-edge from $v_i$ to itself corresponds to the pivot diagonal sub-block in the matrix. After eliminating $v_i$, for every pair of outgoing edge $e_{v_i \to v_j}$ to a vertex $v_j$ and incoming edge $e_{v_k \to v_i}$ from a vertex $v_k$, a new edge from $v_k$ to $v_j$ is created, corresponding to the Schur complement of the eliminated edges, that is
\blue{\begin{equation}
-\mat(e_{v_i \to v_j}) \cdot \mat(e_{v_i \to v_i})^{-1}  \cdot\mat(e_{v_k \to v_i}) = -A_{j,i} A_{i,i}^{-1} A_{i,k}
\end{equation}}
Note that if the edge between $v_k$ and $v_j$ exists before elimination, the Schur complement adds to the existing sub-block.

The process described above reveals the fact that during the elimination process many new edges are introduced in the graph. This corresponds to generating new non-zero blocks in the matrix during the LU factorization. The generation of many dense blocks is what makes the direct factorization of sparse matrices a prohibitive process. Essentially, a matrix $A$ can be sparse, while $L$ and $U$ in the LU factorization of $A$ are dense. In the next section, we explain how we can preserve the sparsity of the matrix during the elimination process \blue{by compressing} \red{the well-separated interactions}. \blue{This process is known as extended sparsification}.

\subsection{Key idea} \label{sec:keyIdea}
\red{An important} observation in the elimination process is the fact that fill-ins (i.e., new edges created during the elimination process) that correspond to well-separated vertices are \red{often numerically} low-rank. \red{For a linear system obtained from a discretized PDE, \blue{well-separated vertices} refers to \blue{points} that are physically far enough from each other. 
For a general sparse matrix two vertices are well-separated if their distance in the adjacency graph is large enough. It is formally defined in \cref{def:wellSep}. We replace such fill-ins with a sequence of low-rank \blue{matrices}.}

\red{
For example, consider the following symmetric linear system that is partitioned into 3 blocks
\begin{equation} \label{eqn:example}
\begin{pmatrix}
S & B & C\\
B^{\intercal} & P & \\
C^{\intercal} & &Q\\
\end{pmatrix}
\begin{pmatrix}
\boldsymbol{x}_1\\
\boldsymbol{x}_2\\
\boldsymbol{x}_3
\end{pmatrix}
=
\begin{pmatrix}
\boldsymbol{b}_1\\
\boldsymbol{b}_2\\
\boldsymbol{b}_3
\end{pmatrix}
\end{equation}}

\begin{figure}[tbhp]
\begin{center}
\subfloat[]{\includegraphics[width=0.3\textwidth]{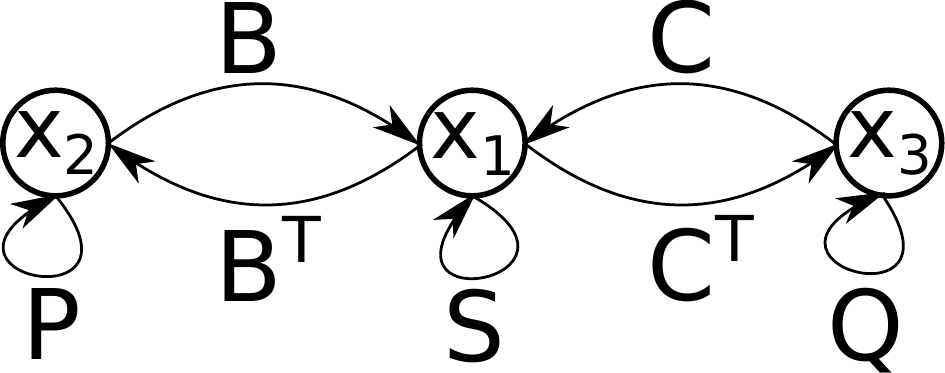}\label{fig:keyIdea:sub1}}
\hspace{3mm}
\subfloat[]{\includegraphics[width=0.3\textwidth]{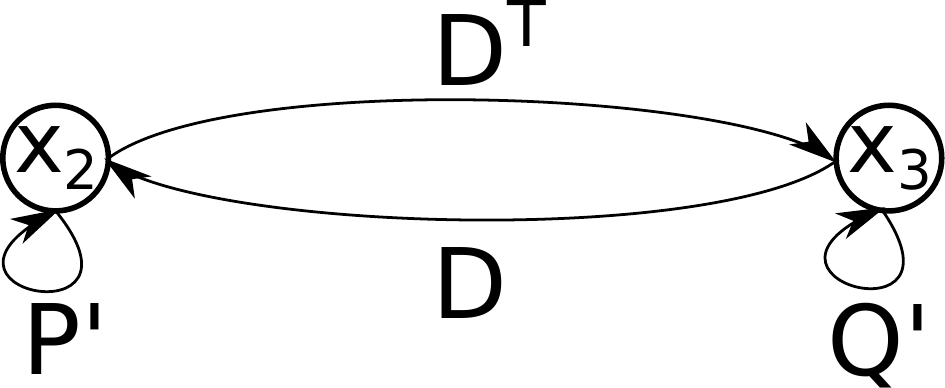}\label{fig:keyIdea:sub2}}
\hspace{3mm}
\subfloat[]{\includegraphics[width=0.3\textwidth]{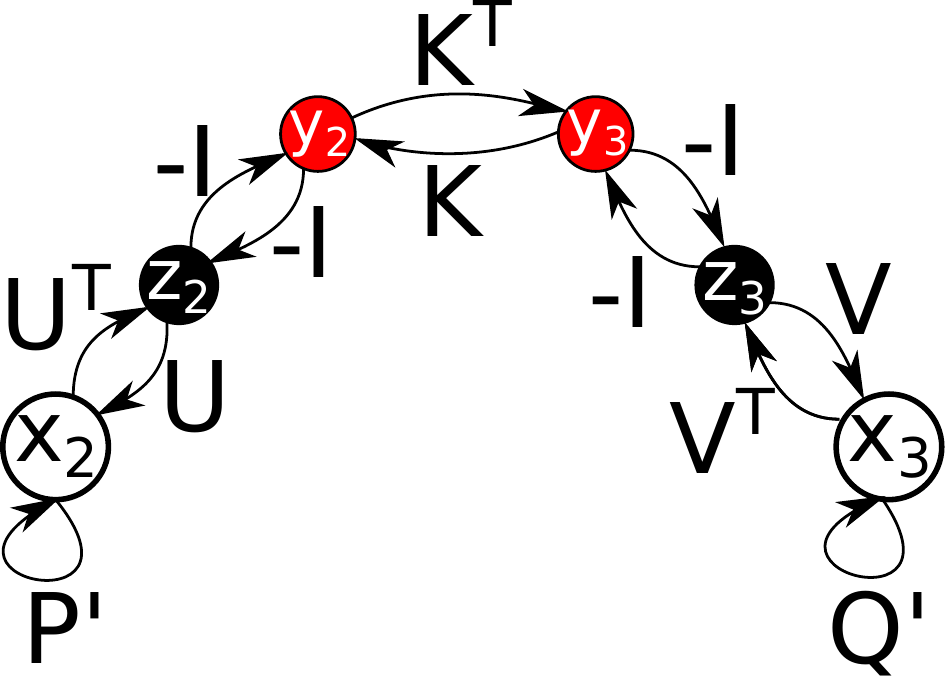}\label{fig:keyIdea:sub3}}
\caption{\red{An example of compression of a well-separated interaction. (a) The adjacency graph of the original linear system described in \cref{eqn:example}. (b) The resulting graph after eliminating $\boldsymbol{x}_1$ node. (c) The adjacency graph of the extended system. All edges are labeled with their corresponding block in the matrix.}}
\label{fig:keyIdea}
\end{center}
\end{figure}

\red{
In \cref{fig:keyIdea:sub1} the adjacency graph of the system of equations in \cref{eqn:example} is shown. Now, consider eliminating $\boldsymbol{x}_1$. The resulting system is as follows
\begin{equation} \label{eqn:example1}
\begin{pmatrix}
P' & D\\
D^{\intercal} & Q' \\
\end{pmatrix}
\begin{pmatrix}
\boldsymbol{x}_2\\
\boldsymbol{x}_3
\end{pmatrix}
=
\begin{pmatrix}
\boldsymbol{b}'_2\\
\boldsymbol{b}'_3
\end{pmatrix},
\end{equation}
where $D = -B^{\intercal}S^{-1}C$, $P' = P - B^{\intercal}S^{-1}B$,  $Q' = Q - C^{\intercal}S^{-1}C$, $\boldsymbol{b}'_2 = \boldsymbol{b}_2 - B^{\intercal}S^{-1} \boldsymbol{b}_1$, and $\boldsymbol{b}'_3 = \boldsymbol{b}_3 - C^{\intercal}S^{-1} \boldsymbol{b}_1$. The adjacency graph of the system of equations in \cref{eqn:example1} is depicted in \cref{fig:keyIdea:sub2}.
Nodes 2 and 3 can be considered well-separated \orange{(see \cref{def:wellSep})}. They get connected due to the elimination of node 1. We assume their interaction is low-rank, and can be written as
\begin{equation}\label{eqn:lowrank}
D \simeq U K V^{\intercal},
\end{equation}
where $U$ and $V$ are tall matrices. We can use any low-rank approximation in \cref{eqn:lowrank}, for example the singular value decomposition (SVD).
We combine \cref{eqn:example1,eqn:lowrank} to define a new set of equations, in which direct interaction of the nodes 2 and 3 is replaced by a sequence of low-rank interactions
\begin{equation} \label{eqn:extend}
\begin{pmatrix}
P'&&U&&&\\
&Q'&&V&&\\
U^{\intercal}&&&&-I&\\
&V^{\intercal}&&&&-I\\
&&-I&&&K\\
&&&-I&K^{\intercal}&\\
\end{pmatrix}
\begin{pmatrix}
\boldsymbol{x}_2\\
\boldsymbol{x}_3\\
\boldsymbol{z}_2\\
\boldsymbol{z}_3\\
\boldsymbol{y}_2\\
\boldsymbol{y}_3
\end{pmatrix}
=
\begin{pmatrix}
\boldsymbol{b}'_2\\
\boldsymbol{b}'_3\\
\boldsymbol{0}\\
\boldsymbol{0}\\
\boldsymbol{0}\\
\boldsymbol{0}
\end{pmatrix}
\end{equation}}

\red{In the above equation, we introduced extra variables ($\boldsymbol{y}_2, \boldsymbol{y}_3, \boldsymbol{z}_2$, and $\boldsymbol{z}_3$) to represent the far-field interactions. This technique is known as extended sparsification \cite{chandrasekaran2006fast2}. Note that the system of equations \cref{eqn:extend} is equivalent to the system of equations \cref{eqn:example1} up to the accuracy of \cref{eqn:lowrank}. In fact, if we do a Gaussian elimination on the matrix in \cref{eqn:extend} using $2\times2$ blocks starting from the right, we get back to the original matrix in \cref{eqn:example1}.}

\red{In an analogy with the fast multipole method, $\boldsymbol{y}_2$ and $\boldsymbol{y}_3$ can be considered as the multipole coefficients, and $\boldsymbol{z}_2$ and $\boldsymbol{z}_3$ as the local coefficients. In \cref{fig:keyIdea:sub3} the adjacency graph of the extended linear system is depicted. The red and black nodes correspond to the} \blue{extended part of the matrix in \cref{eqn:extend} (extra variables introduced above).} \blue{As a result of extended sparsification, nodes 2 and 3 are disconnected in \cref{fig:keyIdea:sub3}. \orange{Therefore,  in the general case $\boldsymbol{x}_2$ could be eliminated without introducing new fill-ins between $\boldsymbol{x}_3$ and other neighbors of $\boldsymbol{x}_2$.}\footnote{\blue{Note that in this example, we intentionally chose order of elimination $\boldsymbol{x}_1, \boldsymbol{x}_2, \boldsymbol{x}_3$, to introduce a new fill-in, and demonstrate the low-rank compression process. Using $\boldsymbol{x}_2, \boldsymbol{x}_1, \boldsymbol{x}_3$, as the order of elimination would result in no fill-ins for this example. Finding such an order of elimination, however, is a hard problem and may not be possible for practical matrices.}}}
\orange{As shown In \cref{lem:sparse}, removing well-separated interactions of nodes before elimination preserves the sparsity.}

\blue{In the general case, we start by a standard block Gauss elimination. When we create fill-ins, similar to the above example, we apply the extended sparsification (compression). This results in the creation of new nodes in the graph belonging to a coarser level.}

\blue{As we proceed with the elimination process at level $i$, edges between the auxiliary nodes at level $i-1$ are created. After the elimination process at level $i$ is completed, we proceed with the elimination at level $i-1$, and continue up to the root (level 0). Essentially, in this process we form a hierarchical approximation of $L$ and $U$ matrices (which are generally dense) in the LU factorization of $A$, without directly computing them.}

\blue{In addition, similar to the agglomeration process in multi-grid methods, we consider one red-node and one black-node for every pair of clusters (i.e., the well-separated interactions of each pair of clusters are compressed together). Therefore, the number of red-nodes at the parent level is half of the number of red-nodes at the current level.
We formally define the hierarchy of nodes in \cref{sec:HTree}, and the details of the algorithm in \cref{sec:alg}.}

\section{Hierarchical representation}\label{sec:HTree}
To form a hierarchical tree, \red{we recursively partition \orange{the rows/columns indices of the matrix}. This is formally defined as} a sequence of nested partitionings.

\begin{definition} (nested partitionings)
\blue{A sequence of partitionings $\mathcal{P}_0, \ldots , \mathcal{P}_l$ with $n_{{\mathcal{P}_i}} = 2^i$ is called nested if every cluster $C_j^{\mathcal{P}_i}$ is the union of two clusters $C_{j'}^{\mathcal{P}_{i+1}}$ and $C_{j''}^{\mathcal{P}_{i+1}}$ for $0 \le i < l$. Cluster $C_j^{\mathcal{P}_i}$ is then called the parent of two child clusters $C_{j'}^{\mathcal{P}_{i+1}}$ and $C_{j''}^{\mathcal{P}_{i+1}}$. We call $C_{j'}^{\mathcal{P}_{i+1}}$ and $C_{j''}^{\mathcal{P}_{i+1}}$ sibling clusters. The level of a cluster is defined as the index $i$ of its defining partitioning $\mathcal{P}_i$. The clusters associated to the finest partitioning, $\mathcal{P}_l$, have no children and are called leaf clusters, while the cluster associated to $\mathcal{P}_0$ has no parent and is called the root cluster. All other clusters have exactly two children and one parent.}\footnote{The binary subdivision is not necessary. We can generalize this to quad-tree, octree, etc.}
\end{definition}
\red{A visual example of nested partitionings with six levels (i.e., $\mathcal{P}_0, \ldots, \mathcal{P}_5$) is illustrated in \cref{sec:appB}.}
Now, we define the hierarchical tree\footnote{In fact, it is not a tree. As we see later, there are edges between nodes at each level.} (denoted by $\mathcal{H}$-tree) of a sparse matrix $A$ given a sequence of nested partitionings.

\begin{definition}\label{def:Htree}\blue{ (hierarchical tree)
Given a sparse matrix $A$, and a nested sequence of partitionings $\mathcal{P}_0, \ldots, \mathcal{P}_l$, the hierarchical tree ($\mathcal{H}$-tree) is defined as a directed graph with red and black vertices and two types of edges (parent-child and interaction edges).} 

\blue{The vertices of $\mathcal{H}$-tree are corresponding to the clusters associated with the nested partitionings. 
For every cluster $C_j^{\mathcal{P}_i}$ with $0 \le i < l$ and $1 \le j \le 2^{i}$ there is a corresponding black-node $b_j^{[i+1]}$ in the vertex set of $\mathcal{H}$-tree. Also, there is a pair of red-nodes ${r_0}_j^{[i+1]}$ and ${r_1}_j^{[i+1]}$ corresponding to the sibling clusters $C_{j'}^{\mathcal{P}_{i+1}}$ and $C_{j''}^{\mathcal{P}_{i+1}}$ (children of the cluster $C_j^{\mathcal{P}_i}$) in the vertex set of $\mathcal{H}$-tree.
We call such a pair of red-nodes a super-node, and denote it by \orange{$s_j^{[i+1]}$}, that is corresponding to the cluster $C_j^{\mathcal{P}_i}$.
\orange{$b_j^{[i+1]}$ is connected to ${r_0}_j^{[i+1]}$ and ${r_1}_j^{[i+1]}$ by parent-child edges. 
Hence, $b_j^{[i+1]}$ is also the parent of super-node $s_j^{[i+1]}$. Additionally, the red-node corresponding to $C_j^{\mathcal{P}_i}$ is connected to $b_j^{[i+1]}$ by a parent-child edge.} We denote the parent of a node $v$ (which can be a red, black, or super node) by $\mathbb{P}(v)$.}

\blue{We also consider one special red-node associated to the root cluster. The level of each vertex of $\mathcal{H}$-tree is \orange{denoted by a superscript index}. Additionally, the depth of $\mathcal{H}$-tree is defined as $l$.}

\blue{There is an interaction edge between two red-nodes with level $l$ (leaf red-nodes), if the corresponding vertices in the adjacency graph of $A$ with partitioning $\mathcal{P}_l$ are connected. Therefore, the subgraph of $\mathcal{H}$-tree induced by ${r_0}_j^{[l]}$ and ${r_1}_j^{[l]}$ for $1 \le j \le 2^{l-1}$ is the adjacency graph of $A$ with partitioning $\mathcal{P}_l$.}
\end{definition}

\blue{There is no interaction edge between non-leaf vertices (shown transparent in \cref{fig:HTree}) of an $\mathcal{H}$-tree before applying the elimination. Non-leaf nodes are reserved to be used for the extended sparsification similar to the \orange{red and black nodes in \cref{fig:keyIdea:sub3}}.}

\orange{Similar to the vertices of an adjacency graph, each node of an $\mathcal{H}$-tree also corresponds to a set of variables and equations. Leaf-nodes of the $\mathcal{H}$-tree correspond to the variables and equations in \cref{eqn:linsys} partitioned using $\mathcal{P}_l$. Non-leaf nodes of the $\mathcal{H}$-tree, however, correspond to the auxiliary variables and equations.
In \cref{sec:comp}, when we explain the extended sparsification in the general case, we will introduce the variables and equations corresponding to the non-leaf nodes of the $\mathcal{H}$-tree.}

An example of an $\mathcal{H}$-tree is depicted in \cref{fig:HTree}. \blue{A parent-child edge between two nodes is shown by a dashed line, whereas interaction edges are shown by solid lines. In the rest of the paper, we use edge and interaction edge interchangeably, while parent-child edges are explicitly mentioned.}

\begin{figure}[htbp] \centering
\includegraphics[width=.9\textwidth]{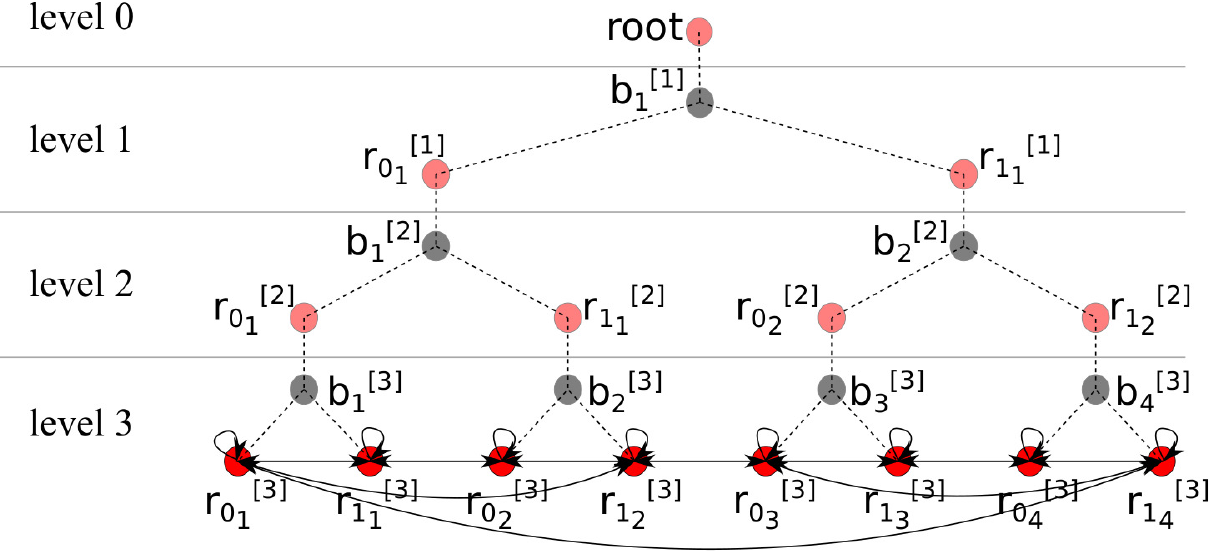}
\caption{\red{An example of a hierarchical tree. Dashed lines show parent-child \blue{edges}, and solid lines \blue{represent interaction edges}. Non-leaf nodes (shown transparent) have no interaction initially, and are reserved to represent the well-separated interactions at the level below them.}}
\label{fig:HTree} 
\end{figure}

\begin{definition}\label{def:adj}\blue{ (adjacent clusters)
Consider a sparse matrix $A$ and a nested sequence of partitionings $\mathcal{P}_0, \mathcal{P}_1, \mathcal{P}_2, \ldots, \mathcal{P}_l$. Two leaf clusters $C_p^{\mathcal{P}_l}$ and $C_q^{\mathcal{P}_l}$ are called adjacent (or neighbors) iff $A_{p,q}$ or $A_{q,p}$ are non-zero blocks (i.e., nodes $v_p$ and $v_q$ in the adjacency graph of $A$ with partitioning $\mathcal{P}_l$ are connected). Two clusters $C$ and $C'$ (not necessarily with the same level) are adjacent iff a leaf descendant\footnote{\blue{Cluster $C_1$ is a descendant of cluster $C_2$ iff $C_1 \subseteq C_2$.}} cluster of $C$ is adjacent to a leaf descendant cluster of $C'$.}
\end{definition}

\begin{definition}\label{def:wellSep} (well-separated nodes in the $\mathcal{H}$-tree)
\blue{Nodes $u$ and $v$ in the vertex set of an $\mathcal{H}$-tree (which can be red, black, or super-nodes at different levels)} are well-separated if their corresponding clusters are not adjacent. An interaction edge that connects two well-separated nodes is called a well-separated edge. \blue{Well-separated edges do not exist initially in the $\mathcal{H}$-tree, and are created as a result of elimination.}
\label{def:wellsep}
\end{definition}

\orange{Low-rank interaction is typically observed in physical systems when two clusters are sufficiently far apart from each other (e.g., well-separated clusters in fast multipole method). We can replace this condition by an equivalent distance requirement in the graph. However, for simplicity we weakened the requirement and declare that two clusters are well-separated only if they are not adjacent (\cref{def:wellSep}). If the partitioning of the domain is adequate, this simple definition is sufficient to approximate the more accurate separation requirement based on distance. In many cases, well-separated interactions are low-rank, but there is no guarantee. It depends on various details not studied in this work, such as the shape of the clusters.}
\section{Algorithm}\label{sec:alg}
\blue{In the previous section we defined the $\mathcal{H}$-tree, which is used to represent graphically the extended system\orange{, similar to the example provided in \cref{fig:keyIdea:sub3}}. In this section, we explain the details of the algorithm to compute a hierarchical representation of an approximate LU factorization of matrix $A$. Note that while $A$ is sparse, the $L$ and $U$ matrices are typically dense. However, we assume that they can be represented using a hierarchical tree (through the extended sparsification). We could find the matrices $L$ and $U$ through an elimination process, and compute their hierarchical representation using an extended sparsification. However, in order to achieve linear complexity, we perform elimination and the extended sparsification (compression) together.}

\red{The algorithm presented in this paper takes advantage of similar technique as in the inverse fast multipole method (IFMM) \cite{coulier2015inverse}. The IFMM can be used to compute the \blue{hierarchical representation of LU factorization} of a dense matrix which is \blue{given in hierarchical form (i.e., FMM matrix).
Essentially, in the IFMM, we are given an $\mathcal{H}$-tree that has interaction edges at all levels. This $\mathcal{H}$-tree represents a dense matrix.}}

\blue{In the sparse case, however, the $\mathcal{H}$-tree initially has interaction edges only at the leaf level. For both of the sparse and dense cases, we start with an $\mathcal{H}$-tree that represents matrix $A$, and end up with an $\mathcal{H}$-tree representing an approximate LU factorization of $A$. \red{In \cref{alg:fact} the overall factorization scheme is introduced. Various sub-algorithms are explained afterwards. In addition, a step by step example of the \blue{elimination} process on the $\mathcal{H}$-tree and the corresponding extended matrix is presented in \cref{sec:appA}. Similar to the standard LU factorization, after the elimination process we are able to efficiently compute $A^{-1}\boldsymbol{b}$ through forward and backward substitutions.}}

\IncMargin{1em}
\begin{algorithm}
\DontPrintSemicolon
\SetKwFunction{Input}{Input}
\SetKwFunction{Output}{Output}
\SetKwFunction{Initialize}{Initialize}
\SetKwFunction{MergeRedNodes}{MergeRedNodes}
\SetKwFunction{Eliminate}{Eliminate}
\SetKwFunction{Compress}{Compress}
\BlankLine
\blue{\Input: sparse matrix $A$}\;
\Initialize{$l$}\tcp*{\orange{form an initial H-tree with l levels}}
\For(\tcp*[f]{\orange{iterate over levels from leaf to root}}){$i\leftarrow l$ \KwTo $\blue{1}$}{
\For(\tcp*[f]{\orange{iterate over nodes at each level}}){$j\leftarrow 1$ \KwTo $2^{i-1}$}{
$s_j^{[i]} \leftarrow$ \MergeRedNodes{${r_0}_j^{[i]}, {r_1}_j^{[i]}$} \tcp*{\orange{form super-node}}
}
\For{$j\leftarrow 1$ \KwTo $2^{i-1}$}{
\Compress{$s_j^{[i]}$}\tcp*{\orange{compress well-separated interactions of super-node}}
\Eliminate{$s_j^{[i]}$}\tcp*{\orange{Gauss elimination for super-node}}
\Eliminate{$b_j^{[i]}$}\tcp*{\orange{block Gauss elimination for black-node}}
}
}
\blue{\Output: LU factorization of $A$ stored hierarchically in the $\mathcal{H}$-tree}\;
\caption{Factorization using $\mathcal{H}$-tree.} \label{alg:fact}
\end{algorithm}\DecMargin{1em}

\subsection{Initializing the $\mathcal{H}$-tree}\label{sec:initHtree}
The \verb!Initialize(!$l$\verb!)! function in \cref{alg:fact} consists of \blue{computing $l$ nested partitionings and form the $\mathcal{H}$-tree with depth $l$, as defined in \cref{def:Htree}}. \red{An example of an $\mathcal{H}$-tree and the corresponding matrix is depicted in \cref{fig:pictorial_alg0}}.
\blue{Leaf nodes and interaction edges of the $\mathcal{H}$-tree initially represent the given linear system of equations \cref{eqn:linsys} with partitioning $\mathcal{P}_l$. Through the elimination process, we extend the system of equations, and use non-leaf nodes of the $\mathcal{H}$-tree to represent the new variables and equations.}

\subsection{Forming super-nodes} \label{sec:merge}
The outer-loop in \cref{alg:fact} is over different levels from the bottom to the top of the tree. At each level, we start by merging red-siblings into super-nodes. \red{In \cref{alg:fact} this process is denoted by the function} \verb!MergeRedNodes()!. \blue{This process results in a coarser representation of the linear system}. 
\blue{We substitute interactions (i.e., edges) between any two pairs of red-nodes (${r_0}_j^{[i]}, {r_1}_j^{[i]}$ and ${r_0}_{j'}^{[i]}, {r_1}_{j'}^{[i]}$) with an interaction between super-nodes ($s_j^{[i]}$ and $s_{j'}^{[i]}$) as follows.
\begin{equation}
\mat(e_{s_j^{[i]} \to s_{j'}^{[i]}} ) =
\begin{pmatrix}
\mat(e_{ {r_0}_j^{[i]} \to {r_0}_{j'}^{[i]} } )&\mat(e_{ {r_1}_j^{[i]} \to {r_0}_{j'}^{[i]} } )\\
\mat(e_{ {r_0}_j^{[i]} \to {r_1}_{j'}^{[i]} } )& \mat(e_{ {r_1}_j^{[i]} \to {r_1}_{j'}^{[i]} } )
\end{pmatrix}
\end{equation}}

Similarly, the variable and right hand side vectors corresponding to a super-node is formed by concatenating the variables and right hand sides of its constituting red-nodes.
\begin{equation} \label{eqn:concatX}
\x(s_j^{[i]}) = \verb!concatenate!\left(\x({r_0}_j^{[i]}), ~\x({r_1}_j^{[i]})\right)
\end{equation}
\begin{equation} \label{eqn:concatRHS}
\rhs(s_j^{[i]}) = \verb!concatenate!\left(\rhs({r_0}_j^{[i]}), ~\rhs({r_1}_j^{[i]})\right)
\end{equation}
The process of merging red-nodes to form the super-nodes is illustrated in \cref{fig:merge}. \red{The merging process is also depicted in \cref{fig:pictorial_alg1} in the example provided in \cref{sec:appA}.}
\begin{figure}[htbp] \centering
\includegraphics[width=0.6\textwidth]{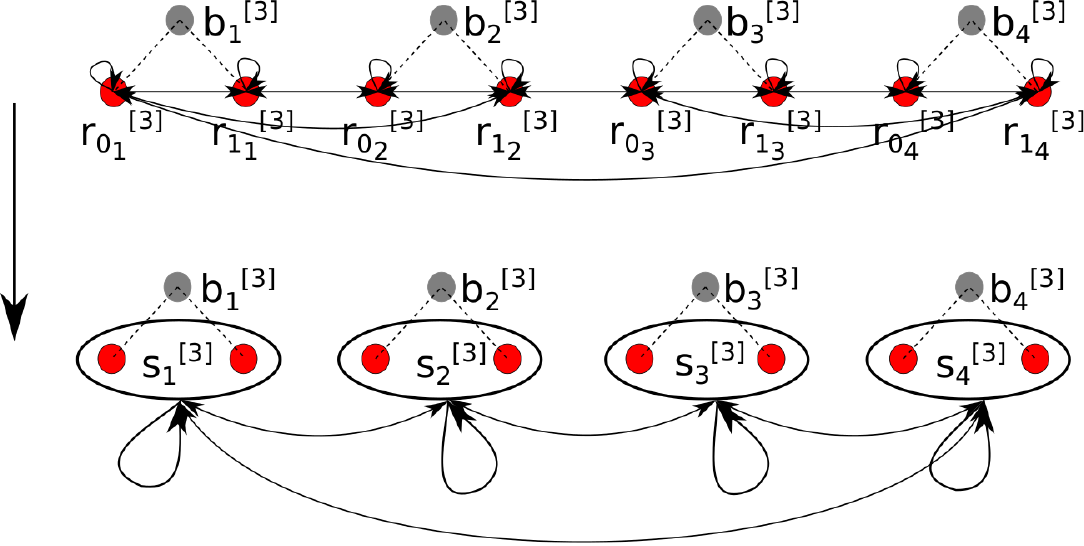}
\caption{An example of the merge process \red{corresponding to the} {\normalfont\texttt{MergeRedNodes()}} \red{function in \cref{alg:fact}}. The leaf red-nodes of the $\mathcal{H}$-tree in \cref{fig:HTree} (top) are merged to super-nodes (bottom).}
\label{fig:merge}
\end{figure}
\subsection{Compressing well-separated edges} \label{sec:comp}
The next sub-algorithm to consider is the \emph{compression}. \red{In \cref{alg:fact} this process is denoted by the function} \verb!Compress()!. During the compression, well-separated interactions of a super-node are pushed to the parent level nodes \red{which represent a set of auxiliary variables. This is essentially the extended sparsification method which we discussed in the example in \cref{sec:keyIdea} (i.e., replacing the well-separated edges in \cref{fig:keyIdea:sub2} by the sequence of edges between red and black nodes at the parent level as shown in \cref{fig:keyIdea:sub3}).}

 Assume we are at level $i$, and \red{ about to apply }\verb!Compress(!$s_j^{[i]}$\verb!)!. \red{Also, assume $s_j^{[i]}$ is of size $m$ (i.e., corresponds to $m$ variables and $m$ equations)}, and interacts with (i.e., has an edge to) $t$ well-separated nodes $p_1, p_2, \ldots, p_t$ with sizes $m_1, m_2, \ldots, m_t$, respectively. \blue{As it will be clear later, the $p_k$ nodes} for $k=1,\ldots,t$ can \blue{either} be a red-node (at the parent level) or a super-node \red{(at the same level)}.
Assume blocks $A_1, A_2, \ldots, A_t$  are associated to the outgoing well-separated edges from $s_j^{[i]}$ to  $p_1, p_2, \ldots, p_t$, respectively, \blue{i.e., $A_k = \mat(e_{s_j^{[i]}\to p_k})$,} where $A_k$ is an $m_k$ by $m$ matrix. Similarly, $B_1^{\intercal}, B_2^{\intercal}, \ldots, B_t^{\intercal}$ are associated to the incoming well-separated edges to $s_j^{[i]}$, \blue{i.e., $B_k^{\intercal} = \mat(e_{p_k\to s_j^{[i]}})$,} where $B_k$ is an $m_k$ by $m$ matrix. \red{This is depicted schematically in \cref{fig:comp} \blue{(left)}.}

\red{
Similar to the example in \cref{sec:keyIdea}, we assume well-separated edges can be approximated using a low-rank factorization. We compress all well-separated interactions of a super-node together. We then introduce auxiliary variables, and replace the well-separated edges by new edges between the auxiliary variables (i.e., going from the left configuration to the right configuration in \cref{fig:comp}).}
\begin{figure}[htbp] \centering
\includegraphics[width=0.99\textwidth]{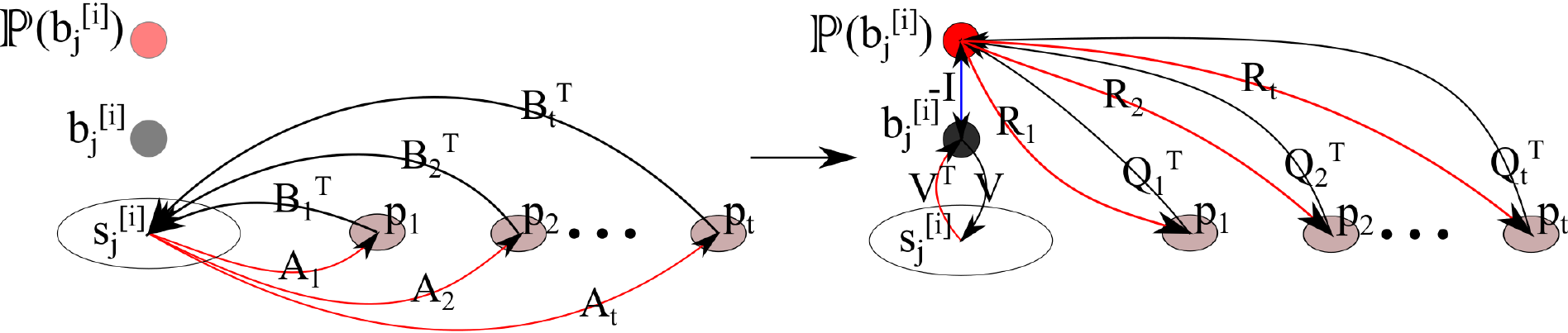}
\caption{Schematic of the compression process for a super-node $s_j^{[i]}$ \red{corresponding to the} {\normalfont\texttt{Compress()}} \red{function in \cref{alg:fact}}. Well-separated interactions are replaced with low-rank interactions with the red parent.}
\label{fig:comp} 
\end{figure}

\red{In order to compress all $t$ well-separated edges together, we form a temporary matrix} by vertically concatenating $A_1, \ldots, A_t$ as well as $B_1, \ldots, B_t$. Use a low-rank approximation method (e.g., SVD) and write:
\begin{equation}
\begin{bmatrix}
A_1\\
\vdots\\
A_t\\
B_1\\
\vdots\\
B_t
\end{bmatrix}
\simeq
\begin{bmatrix}
R_1\\
\vdots\\
R_t\\
Q_1\\
\vdots\\
Q_t
\end{bmatrix}
V^{\intercal},
\label{eqn:lra}
\end{equation}
where $R_k$ and $Q_k$ are $m_k$ by $r$ matrices for $k=1,2,\ldots, t$, and $V$ is an $m$ by $r$ matrix. $r$ is the rank in the above low-rank approximation. From \cref{eqn:lra} we can write
\blue{
\begin{equation}
A_k \simeq R_k V^{\intercal} \text{ and } B_k^{\intercal} = V Q_k^{\intercal} \quad \text{for}\quad k=1,2,\ldots,t
\end{equation}}
$\x(s_j^{[i]})$ contributes to the equation \blue{corresponding} to a node $p_k$ with a term like $A_k \cdot \x(s_j^{[i]})$ for $k=1,2,\ldots, t$. \blue{We can rewrite this term as}
\begin{equation} \label{eqn:equiv}
A_k \cdot \x(s_j^{[i]}) \simeq \left(R_k V^{\intercal} \right) \cdot \x(s_j^{[i]}) = R_k \left( V^{\intercal} \cdot \x(s_j^{[i]}) \right)
\end{equation}
\blue{Similarly, the contribution of $\x(p_1), \ldots, \x(p_t)$ in the equation corresponding to the node $s_j^{[i]}$ can be written as}
\begin{equation} \label{eqn:net}
\sum_{k=1}^t B_k^{\intercal} \cdot \x(p_k) \simeq \sum_{k=1}^t(V Q_k^{\intercal}) \cdot \x(p_k)  = V \sum_{k=1}^t Q_k^{\intercal} \cdot \x(p_k)
\end{equation}
\blue{Now, we apply the extended sparsification, and introduce new variables and equations as follows}
\begin{equation}
\orange{\x( \mathbb{P}( b_j^{[i]} ) ) := V^{\intercal} \cdot \x(s_j^{[i]}) \; \Rightarrow } \; \; V^{\intercal} \cdot \x(s_j^{[i]}) - \x( \mathbb{P}( b_j^{[i]} ) ) = {\boldsymbol{0}}
\label{eqn:anterpol}
\end{equation}
\begin{equation}
\orange{\x(b_j^{[i]}) := \sum_{k=1}^t Q_k^{\intercal} \cdot \x(p_k) \; \Rightarrow } \; \; \sum_{k=1}^t Q_k^{\intercal} \cdot \x(p_k)-\x(b_j^{[i]}) = {\boldsymbol{0}}
\label{eqn:interpol}
\end{equation}
\blue{In the above, we defined two new vector of variables, $\x( \mathbb{P}( b_j^{[i]} ) )$ and $\x(b_j^{[i]})$, each of size $r$ (similar to the vectors $\boldsymbol{y}_2$ and $\boldsymbol{z}_2$ in the example of \cref{sec:keyIdea}).
We assign equations \cref{eqn:anterpol,eqn:interpol} to the black-node $b_j^{[i]}$ and the red-node $\mathbb{P}( b_j^{[i]} )$, respectively. Therefore, 
\begin{equation}\label{eqn:rhszero}
\rhs(b_j^{[i]})=\rhs(\mathbb{P}( b_j^{[i]})) = {\boldsymbol{0}}
\end{equation}
Now, we apply the following edge updates:
\begin{itemize}
\item Remove edges from $s_j^{[i]}$ to $p_k$ and vice versa for $k=1,2,\ldots, t$.
\item Add edges from $\mathbb{P}( b_j^{[i]})$ to $p_k$ with blocks $R_k$ for $k=1,2,\ldots, t$.
\item Add edges from $p_k$ to $\mathbb{P}( b_j^{[i]})$ with blocks $Q_k^{\intercal}$ for $k=1,2,\ldots, t$.
\item Add an edge from $s_j^{[i]}$ to $b_j^{[i]}$ and vice versa with blocks $V^{\intercal}$ and $V$, respectively.
\item Add an edge from $b_j^{[i]}$ to $\mathbb{P}( b_j^{[i]})$ and vice versa with blocks $-\mathbb{I}_{r}$ (minus identity matrix of size $r$).
\end{itemize}
Therefore, in the compression process all well-separated edges connected to the super-node $s_j^{[i]}$ are substituted with edges to/from $\mathbb{P}( b_j^{[i]} )$, as shown in \cref{fig:comp}.
\orange{Note that each step of the compression process described here is in some sense ``half'' of the extended sparsification step as described in \cref{sec:keyIdea}.
For example, in \cref{fig:pictorial_alg2} there is a well-separated interaction between $s_2^{[3]}$ and $s_4^{[3]}$. After the compression step, this interaction is substituted with a well-separated edge between ${r_1}_1^{[2]}$ and $s_4^{[3]}$ (\cref{fig:pictorial_alg3}). Ultimately, when we are about to eliminate $s_4^{[3]}$, we further compress this edge and make connection between ${r_1}_1^{[2]}$ and ${r_1}_2^{[2]}$ (\cref{fig:pictorial_alg6}), which is now the same as the example in \cref{fig:keyIdea}.}}

\blue{As a result of the compression process on a super-node $s_j^{[i]}$, we defined $2r$ new auxiliary variables}, \orange{corresponding to two nodes $b_j^{[i]}$ and  $\mathbb{P}( b_j^{[i]})$, each of size $r$}. Note that if the matrix is symmetric, $A_k = B_k$ for $k=1,\ldots,t$. Therefore, we would not need to concatenate $B_k$'s in \cref{eqn:lra}, and only half of the above calculations are required.

\subsection{Elimination} \label{sec:elimAlg}
After compressing all well-separated edges, we apply the standard elimination. 
As a result of the compression process, the super-node  $s_j^{[i]}$ is only connected to its \blue{original} neighbor nodes.
This is a key property of the algorithm that preserves the sparsity of the matrix, \orange{which results in a slightly larger system of equations (a constant number times the original size of the matrix)}. The elimination process for a node is explained in \cref{sec:elimination}. We first eliminate the super-node $s_j^{[i]}$, and then eliminate its black-parent $b_j^{[i]}$. \red{In the example provided in \cref{sec:appA}, \cref{fig:pictorial_alg2,fig:pictorial_alg4,fig:pictorial_alg5,fig:pictorial_alg7} illustrate the graph and matrix after the elimination process.}

\subsection{Solve}
\red{After the factorization part is completed,  we can solve for multiple right hand sides. The solve process consists of two steps: a forward and a backward traversal of all nodes. This is identical to the standard forward and backward substitutions in the LU factorization. In the forward traversal we visit all nodes in the order they have been eliminated, and in the backward traversal we visit nodes in the exact reverse order. The solve process is introduced in \cref{alg:solve}.}

Note that in the factorization part we introduced auxiliary variables and equations (i.e., all variables and equations associated to non-leaf nodes). We denote this \emph{extended} system of equations by $A_e {\boldsymbol{x}_e} = {\boldsymbol{b}_e}$, \blue{which is equivalent to the system \cref{eqn:linsys} up to the accuracy of \cref{eqn:lra} (i.e., eliminating the auxiliary variables from the extended system recovers the original system of equations).}
In the solve part, we have to solve for all variables (original and auxiliary variables, i.e., ${\boldsymbol{x}_e}$) even though we are just interested in the original variables, $\boldsymbol{x}$. The number of extra variables is limited (see \cref{lem:lincomp}), and is of the same order as the number of original variables. 

\IncMargin{1em}
\begin{algorithm}[!htbp]
\DontPrintSemicolon
\SetKwFunction{Input}{Input}
\SetKwFunction{Output}{Output}
\SetKwFunction{SetRHS}{SetRHS}
\SetKwFunction{SplitVar}{SplitVar}
\SetKwFunction{SolveL}{SolveL}
\SetKwFunction{SolveU}{SolveU}
\BlankLine
\blue{\Input: An $\mathcal{H}$-tree representing an approximate LU factorization of $A$ and a right hand side vector $\boldsymbol{b}$}\;
\SetRHS{} \tcp*{\orange{set leaf \rhs s to b and non-leaf \rhs s to 0}}
\tcc{\orange{forward substitution}}
\For(\tcp*[f]{\orange{iterate over levels from leaf to root}}){$i\leftarrow l$ \KwTo $\blue{1}$}
{
	\For(\tcp*[f]{\orange{iterate over nodes at each level}}){$j\leftarrow 1$ \KwTo $2^{i-1}$}
	{
		\SolveL{$s_j^{[i]}$} \tcp*{\orange{update \rhs ~of the super-node}}
		\SolveL{$b_j^{[i]}$} \tcp*{\orange{update \rhs ~of the black-node}}
	}
}
\tcc{\orange{backward substitution}}
\For(\tcp*[f]{\orange{iterate over levels from root to leaf}}){$i\leftarrow \blue{1}$ \KwTo $l$}
{
	\For(\tcp*[f]{\orange{iterate nodes with reverse order}}){$j\leftarrow 2^{i-1}$ \KwTo $1$}
	{
		\SolveU{$b_j^{[i]}$} \tcp*{\orange{solve for variables of the black-node}}
		\SolveU{$s_j^{[i]}$} \tcp*{\orange{solve for variables of the super-node}}
		\SplitVar{$s_j^{[i]}$} \tcp*{\orange{split solution between the constituting red-nodes}}
	}
}
\blue{\Output: $\boldsymbol{x} \simeq A^{-1} \boldsymbol{b}$}
\caption{Solve for a given right hand side using $\mathcal{H}$-tree.}\label{alg:solve}
\end{algorithm}\DecMargin{1em}
The solve algorithm begins with \blue{\texttt{SetRHS()}, that is to set the right hand side of all nodes in the $\mathcal{H}$-tree}. \blue{As explained in \cref{def:Htree}, leaf red-nodes of the $\mathcal{H}$-tree correspond to the original equation \cref{eqn:linsys}; therefore, the right hand side of each leaf red-node is a sub-vector} of ${\boldsymbol{b}}$ determined by the leaf-partitioning of the $\mathcal{H}$-tree, $\mathcal{P}_l$.
\blue{Based on \cref{eqn:rhszero}, the right hand side of every non-leaf red-node and black-node is ${\boldsymbol{0}}$. The right hand sides of super-nodes are computed by \cref{eqn:concatRHS}.}

\blue{After setting the right hand side vectors, we apply functions} \verb!SolveL()! and \verb!SolveU()! to all super-nodes and black-nodes. \blue{The function} \verb!SolveL()! \blue{(see \cref{alg:solveL}) is applied through a forward traversal, and updates the right hand side vectors. The function} \verb!SolveU()! \blue{(see \cref{alg:solveU}) is applied through a backward traversal, and solve for the variables of each node.} After solving for variables of a super-node, we split the solution between its two constituting red-nodes (denoted by function \verb!SplitVar()!) \red{according to \cref{eqn:concatX}}. \blue{When \cref{alg:solve} is completed, the solution vector $\boldsymbol{x}$ is formed by concatenating variable vectors of all leaf red-nodes.}
\IncMargin{1em}
\begin{algorithm}[!htbp]
\DontPrintSemicolon
\SetKwProg{Fn}{Function}{}{}
\SetKwFunction{Input}{Input}
\SetKwFunction{Output}{Output}
\SetKwData{Mat}{Mat}
\SetKwData{node}{node}
\SetKwFunction{OutGoingEdges}{OutGoingEdges}
\SetKwFunction{SolveL}{SolveL}
\SetKwFunction{Order}{Order}
\SetKwFunction{RHS}{RHS}
\BlankLine
\Fn{\SolveL{\node $p$}}
{\blue{
$f \leftarrow$ \Mat{$e_{p\to p}$}$^{-1}$ $\cdot$ \RHS{$p$} \;
\For{$e_{p\to q} \in$ \OutGoingEdges{$p$}}
{
	\If(\tcp*[f]{\orange{if q is eliminated after p}}){\Order{q}$>$\Order{$p$}}
	{
		\RHS{q} $\leftarrow$ \RHS{q} $-~$\Mat{$e_{p\to q}$} $\cdot$ $f$
	}
}}
}
\caption{\red{Forward traversal (update the right hand sides)}}\label{alg:solveL}
\end{algorithm}\DecMargin{1em}
\IncMargin{1em}
\begin{algorithm} [!htbp]
\DontPrintSemicolon
\SetKwFunction{Input}{Input}
\SetKwFunction{Output}{Output}
\SetKwData{Mat}{Mat}
\SetKwProg{Fn}{Function}{}{}
\SetKwData{node}{node}
\SetKwFunction{InComingEdges}{InComingEdges}
\SetKwFunction{SolveU}{SolveU}
\SetKwFunction{Order}{Order}
\SetKwFunction{RHS}{RHS}
\SetKwFunction{Var}{Var}
\BlankLine
\Fn{\SolveU{\node $p$}}
{
\Var{$p$}$\leftarrow\RHS{$p$}$\;\blue{
\For{$e_{q\to p} \in$ \InComingEdges{$p$}}
{
	\If(\tcp*[f]{\orange{if q is eliminated after p}}){\Order{$q$}$>$\Order{$p$}}
	{
		\Var{$p$} $\leftarrow$ \Var{$p$} $-~$\Mat{$e_{q\to p}$} $\cdot$ \Var{$q$}\;
	}
}
\Var{$p$} $\leftarrow$ \Mat{$e_{p\to p}$}$^{-1} \cdot$ \Var{$p$}\;
}}
\caption{\red{Backward traversal (solve for variables)}}\label{alg:solveU}
\end{algorithm}\DecMargin{1em}

In \cref{alg:solveL,alg:solveU} \verb!OutGoingEdges(!$p$\verb!)! and \verb!InComingEdges(!$p$\verb!)!, respectively, denote the set of all outgoing and incoming edges of a node $p$ \blue{in the $\mathcal{H}$-tree}. The function \verb!Order(!$p$\verb!)! returns the order of elimination of a node $p$, i.e., if in \cref{alg:fact} a node $q$ is eliminated after a node $p$, then \verb!Order(!$q$\verb!)!$>$\verb!Order(!$p$\verb!)!.
\section {Linear complexity} \label{sec:lin}
In this section we show that the block sparsity of the extended matrix is preserved through the elimination process. Therefore, the factorization algorithm has provable linear complexity provided that the block sizes (and thus the rank of the low-rank approximations) are bounded.

\blue{In \cref{def:wellSep} we defined well-separated nodes in an $\mathcal{H}$-tree. In this section, we generalize this concept, and define distance of nodes in a given $\mathcal{H}$-tree.}

\begin{definition}\label{def:distance} \blue{(distance of nodes in $\mathcal{H}$-tree)
Consider nodes $u$ and $v$ in the vertex set of an $\mathcal{H}$-tree of a sparse matrix $A$ with sequence of nested partitionings $\mathcal{P}_0, \ldots, \mathcal{P}_l$. Using \cref{def:Htree}, $u$ and $v$ correspond to clusters $C_j^{\mathcal{P}_i}$ and $C_{j'}^{\mathcal{P}_{i'}}$ for some $0 \le i,i' \le l$, $1\le j \le 2^i$, and $1 \le j' \le 2^{i'}$. Assume $i \le i'$, and $C_{j'}^{\mathcal{P}_{i'}}$ is a descendant of cluster $C_{k}^{\mathcal{P}_{i}}$. The distance between nodes $u$ and $v$ is defined as the length of (i.e., number of edges) the minimum path between nodes $v_j$ and $v_k$ in the adjacency graph of $A$ with partitioning $\mathcal{P}_i$.}
\end{definition}

\begin{corollary}\label{cor:dist}
\blue{Nodes $u$ and $v$ in the vertex set of an $\mathcal{H}$-tree are well-separated iff their distance is greater than 1.}
\end{corollary}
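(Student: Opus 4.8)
The plan is to prove \cref{cor:dist} by directly unwinding the relevant definitions — well-separatedness (\cref{def:wellSep}), adjacency of clusters (\cref{def:adj}), and distance in the $\mathcal{H}$-tree (\cref{def:distance}) — and in fact to establish the contrapositive: two nodes $u,v$ of the $\mathcal{H}$-tree have \emph{adjacent} clusters if and only if their distance is at most $1$. Since distance is a nonnegative integer, ``distance $\le 1$'' means distance $0$ or $1$, and I would treat these two sub-cases separately.

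Following the notation of \cref{def:distance}, let $u$ and $v$ correspond to clusters $C_j^{\mathcal{P}_i}$ and $C_{j'}^{\mathcal{P}_{i'}}$ with $i\le i'$, and let $C_k^{\mathcal{P}_i}$ be the (unique, by nestedness) level-$i$ ancestor of $C_{j'}^{\mathcal{P}_{i'}}$, so that $\mathrm{dist}(u,v)$ is the length of the shortest $v_j$--$v_k$ path in the adjacency graph of $A$ with partitioning $\mathcal{P}_i$. If $\mathrm{dist}(u,v)=0$, then $j=k$, so $C_{j'}^{\mathcal{P}_{i'}}\subseteq C_j^{\mathcal{P}_i}$; any leaf descendant $C_p^{\mathcal{P}_l}$ of $C_{j'}^{\mathcal{P}_{i'}}$ is then also a leaf descendant of $C_j^{\mathcal{P}_i}$, and since $C_p^{\mathcal{P}_l}$ is adjacent to itself (its pivot block $A_{p,p}$ is nonzero, i.e.\ $v_p$ carries a self-edge as in \cref{sec:elimination}), \cref{def:adj} gives that $C_j^{\mathcal{P}_i}$ and $C_{j'}^{\mathcal{P}_{i'}}$ are adjacent, hence $u,v$ are not well-separated. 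If $\mathrm{dist}(u,v)=1$, then $v_j$ and $v_k$ are joined by an edge, so by \cref{def:adjGraph} one of the blocks $A_{j,k},A_{k,j}$ with respect to $\mathcal{P}_i$ is nonzero; using that $\mathcal{P}_i$ is a coarsening of the leaf partitioning $\mathcal{P}_l$, this nonzero block contains a nonzero entry $A_{[s,t]}$ whose indices lie in leaf clusters $C_q^{\mathcal{P}_l}\subseteq C_k^{\mathcal{P}_i}$ and $C_p^{\mathcal{P}_l}\subseteq C_j^{\mathcal{P}_i}$, with the witness chosen so that $C_q^{\mathcal{P}_l}$ sits inside $C_{j'}^{\mathcal{P}_{i'}}$, which shows $C_j^{\mathcal{P}_i}$ and $C_{j'}^{\mathcal{P}_{i'}}$ are adjacent. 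Conversely, if the clusters of $u$ and $v$ are adjacent, a witnessing pair of adjacent leaf descendants forces a nonzero block of $A$ inside the $\mathcal{P}_i$-blocks indexed by $j$ and $k$, so either $j=k$ or $v_j$ and $v_k$ are joined by an edge, i.e.\ $\mathrm{dist}(u,v)\le 1$. Negating both sides of the resulting equivalence yields \cref{cor:dist}.

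The step I expect to require the most care is the cross-level reduction built into \cref{def:distance}: relating adjacency of the possibly-fine cluster $C_{j'}^{\mathcal{P}_{i'}}$ to adjacency of its level-$i$ ancestor $C_k^{\mathcal{P}_i}$. One direction is immediate, since every leaf descendant of $C_{j'}^{\mathcal{P}_{i'}}$ is a leaf descendant of $C_k^{\mathcal{P}_i}$; the other needs the witnessing leaf block to be selectable with its $C_k^{\mathcal{P}_i}$-side inside $C_{j'}^{\mathcal{P}_{i'}}$, which is exactly how \cref{def:distance} projects the finer node $v$ up to level $i$. I would isolate this as a short lemma — essentially that the adjacency graph of $A$ with partitioning $\mathcal{P}_i$ is the quotient of the leaf adjacency graph under $\mathcal{P}_i$ (cf.\ the footnote to \cref{def:adjGraph}), so that ``$C_j^{\mathcal{P}_i}$ is adjacent to $C_{j'}^{\mathcal{P}_{i'}}$'' is equivalent to ``$v_j$ and $v_k$ are at graph distance at most $1$ under $\mathcal{P}_i$'' — after which \cref{cor:dist} drops out in one line. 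I would also record the implicit standing assumption that every cluster has a nonzero pivot block, since the $\mathrm{dist}=0$ case relies on it and it is consistent with the self-edge convention already used in \cref{sec:elimination}.
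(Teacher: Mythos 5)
You should know at the outset that the paper offers no argument for \cref{cor:dist} whatsoever: it is stated as an immediate consequence of \cref{def:wellSep,def:adj,def:distance}, so your definitional unwinding is already more explicit than anything the authors provide. The direction you handle first is sound: if the clusters of $u$ and $v$ are adjacent, a witnessing pair of adjacent leaf descendants yields a nonzero entry inside the level-$i$ block indexed by $j$ and $k$, hence either $j=k$ or an edge between $v_j$ and $v_k$, hence distance at most $1$; and your $\mathrm{dist}=0$ case is fine once the nonzero-pivot convention is recorded, as you do.

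The gap is in the converse, at precisely the step you flagged as delicate and then waved through. When $i<i'$, an edge between $v_j$ and $v_k$ at level $i$ only tells you that \emph{some} leaf descendant of $C_k^{\mathcal{P}_i}$ is adjacent to some leaf descendant of $C_j^{\mathcal{P}_i}$; it does not let you choose that witness inside the sub-cluster $C_{j'}^{\mathcal{P}_{i'}}\subseteq C_k^{\mathcal{P}_i}$. Concretely, let $C_k^{\mathcal{P}_i}$ have children $C_{j'}^{\mathcal{P}_{i+1}}$ and $C_{j''}^{\mathcal{P}_{i+1}}$, and suppose every nonzero entry of $A$ coupling $C_j^{\mathcal{P}_i}$ to $C_k^{\mathcal{P}_i}$ has its $C_k$-side index in $C_{j''}^{\mathcal{P}_{i+1}}$. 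Then the node for $C_{j'}^{\mathcal{P}_{i+1}}$ and the node for $C_j^{\mathcal{P}_i}$ are well-separated according to \cref{def:adj,def:wellSep}, yet their distance according to \cref{def:distance} equals $1$. Your proposed quotient-graph lemma gives ``$v_j$ and $v_k$ are at distance at most $1$'' but not ``$C_j^{\mathcal{P}_i}$ is adjacent to $C_{j'}^{\mathcal{P}_{i'}}$''; the coarsening built into \cref{def:distance} is lossy in exactly the direction you need, so it cannot be invoked to justify the witness selection. Your argument is complete for pairs of nodes whose clusters live at the same partitioning level ($i=i'$, $k=j'$), which covers the super-node and red-parent comparisons actually arising in the compression step, but the corollary as stated quantifies over arbitrary pairs of $\mathcal{H}$-tree nodes, and for those only the implication ``distance greater than $1$ implies well-separated'' survives unless \cref{def:distance} is strengthened or the claim weakened. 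A correct write-up should either restrict the corollary to same-level clusters or prove only the one-sided implication and note that this is all \cref{lem:sparse} requires.
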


\blue{Note that other criteria are possible to define well-separated nodes (see \cref{sec:conclusion}).}
\begin{theorem} (preservation of sparsity)\label{lem:sparse}
In \cref{alg:fact}, we never \red{create} an edge between two nodes with distance greater than 2.
\end{theorem}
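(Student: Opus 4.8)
The plan is to track, throughout \cref{alg:fact}, the invariant that no interaction edge ever joins two $\mathcal{H}$-tree nodes whose distance (in the sense of \cref{def:distance}) exceeds $2$, and to show that neither of the two edge-creating operations — \texttt{Compress} and \texttt{Eliminate} — can violate it. First I would record the base case: by \cref{def:Htree} the only interaction edges present in the initial $\mathcal{H}$-tree are between leaf red-nodes whose clusters are adjacent, hence of distance $1 \le 2$, and \texttt{MergeRedNodes} merely repackages such edges into super-node edges without changing the underlying clusters, so distances are unchanged. Thus the invariant holds before any elimination.

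Next I would analyze \texttt{Compress}$(s_j^{[i]})$. By the construction in \cref{sec:comp}, every well-separated edge between $s_j^{[i]}$ and a node $p_k$ is deleted and replaced by edges between $p_k$ and the red parent $\mathbb{P}(b_j^{[i]})$, together with edges among $s_j^{[i]}$, $b_j^{[i]}$, $\mathbb{P}(b_j^{[i]})$. The parent-child structure forces $\mathbb{P}(b_j^{[i]})$ and $b_j^{[i]}$ and $s_j^{[i]}$ to all correspond to (sub)clusters of the same cluster $C_j^{\mathcal{P}_{i-1}}$, so the three "internal" edges have distance $0$; and since $s_j^{[i]}$ and $p_k$ were well-separated, $p_k$ is non-adjacent to $C_j^{\mathcal{P}_{i-1}}$'s relevant descendants, so by \cref{def:distance} the new edge from $\mathbb{P}(b_j^{[i]})$ to $p_k$ has the same cluster-distance as the old $s_j^{[i]}$–$p_k$ edge. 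Hence \texttt{Compress} preserves all distances and in particular the invariant; moreover after it runs $s_j^{[i]}$ is adjacent only to its original (non-well-separated) neighbors, i.e. only to nodes at distance exactly $1$.

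Then I would handle \texttt{Eliminate}. Eliminating a node $w$ creates, for each incoming neighbor $u$ and outgoing neighbor $v$ of $w$, a new edge $u \to v$ (the Schur complement). By the triangle inequality for the path-distance of \cref{def:distance}, $\mathrm{dist}(u,v) \le \mathrm{dist}(u,w) + \mathrm{dist}(w,v)$. So it suffices to check that whenever we eliminate a node, all of its current neighbors are at distance $1$ from it. For a super-node $s_j^{[i]}$ this is exactly the consequence of the just-completed \texttt{Compress} step noted above. For a black-node $b_j^{[i]}$, I would argue that at the moment of its elimination all of its neighbors are at distance $1$: $b_j^{[i]}$ acquired edges only to $s_j^{[i]}$ and to $\mathbb{P}(b_j^{[i]})$ during \texttt{Compress} (distance $0$), plus any Schur-complement edges created when $s_j^{[i]}$ was eliminated, which connect $b_j^{[i]}$ to neighbors of $s_j^{[i]}$ — all at distance $1$ from $s_j^{[i]}$, hence at distance $\le 1 + 0 = 1$ from $b_j^{[i]}$ since $b_j^{[i]}$ and $s_j^{[i]}$ sit in the same cluster. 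Thus each elimination produces edges of distance at most $1+1 = 2$, and the invariant is maintained. Running the induction over all levels $i = l, \ldots, 1$ and all $j$ completes the proof.

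The main obstacle I anticipate is the bookkeeping in the black-node step: one must be careful that by the time $b_j^{[i]}$ is eliminated, the elimination of $s_j^{[i]}$ (and possibly of other super-nodes at the same level that interact with $b_j^{[i]}$ only through $\mathbb{P}(\cdot)$ chains) has not introduced some longer-range edge to $b_j^{[i]}$; this requires checking the precise order in \cref{alg:fact} (compress-then-eliminate-super-node-then-eliminate-black-node, looping $j$ before decreasing $i$) and verifying that red parent nodes $\mathbb{P}(b_j^{[i]})$, which live at level $i-1$ and are only eliminated later, are always at distance $0$ from the clusters feeding into them. Making the distance-$0$ claims about parent/child/sibling clusters fully rigorous from \cref{def:distance} (where $i \le i'$ and descendancy is used) is the one place where I would slow down and write things out carefully.
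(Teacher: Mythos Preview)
Your proposal is correct and follows the same core idea as the paper: because \texttt{Compress} strips away all well-separated (distance $>1$) edges from a node before it is eliminated, each \texttt{Eliminate} step can only join two former neighbors of the eliminated node, and the triangle inequality bounds the new distance by $1+1=2$. The paper's own proof is a three-sentence sketch of exactly this observation; your version is considerably more detailed --- you set up an explicit invariant, separately verify that \texttt{Compress} does not increase cluster-distances of the edges it creates, and (unlike the paper) explicitly handle the black-node elimination by noting that $b_j^{[i]}$, $s_j^{[i]}$, and $\mathbb{P}(b_j^{[i]})$ share the same cluster (distance~$0$) so that $b_j^{[i]}$'s neighbors at its elimination time are still within distance~$1$. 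These are refinements, not a different route.
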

\begin{proof}
\orange{Elimination can results in connecting nodes at large distances. However,
in \cref{alg:fact}, before applying elimination we remove all edges to nodes with distance larger than 1 (i.e., the well-separated edges). Therefore, after eliminating a node we create edges between nodes with distance at most 2.}
\end{proof}

\Cref{lem:sparse} shows that for each node we need to process at most $\kappa_1+\kappa_2$ edges\blue{, where $\kappa_1$ and $\kappa_2$ are, respectively, the maximum number of super-nodes at distance 1 and 2 from a super-node.} Note that $\kappa_1$ and $\kappa_2$ depend on the original matrix sparsity pattern, and are independent of the size of the matrix. To establish linear complexity of the factorization, we need to bound the size of the nodes \blue{(i.e., number of variables associated to them)} in the $\mathcal{H}$-tree.

For matrices arising from the discretization of a PDE, well separated edges correspond to the interaction of points that are physically far from each other. Therefore, if the Green's function of the associated PDE is smooth enough, one can expect a well-separated interaction to be \red{numerically} low-rank. We provide numerical evidence in \cref{sec:numerical} to support this \red{argument}. \orange{Note that the low-rank property of well-separated nodes depends on the quality of partitioning and the definition of well-separation. These are topics for followup studies.}

For general sparse matrices we can guarantee the linear complexity through bounding the rank growth. This is explained in the next theorem.\\
\begin{theorem} (linear complexity condition)\label{lem:lincomp} 
Consider \blue{$d_i$} to be the maximum size of super-nodes at level $i$ of an $\mathcal{H}$-tree with \blue{depth $l$} resulted from \cref{alg:fact} such that
\begin{equation} \label{eqn:geometric}
\blue{d_i} \le \alpha^{l-i} \blue{d_l}\quad\text{and}\quad\orange{d_l = \mathcal{O}({n}/{2^l} ) = \mathcal{O}(1),}
\end{equation}
where $0 < \alpha < \sqrt[3]{2}$ is a constant number.
\orange{Also assume $\kappa_1$ and $\kappa_2$, the maximum  number of super-nodes at distance 1 and 2 from a super-node, are $\mathcal{O}(1)$ quantities.}
Under these conditions the cost of the algorithm is linear with respect to the problem size.
 \end{theorem}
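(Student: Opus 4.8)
The plan is to estimate the total work level by level, using the sparsity bound from \cref{lem:sparse} together with the geometric growth assumption \cref{eqn:geometric} on node sizes. First I would observe that the total cost of \cref{alg:fact} decomposes into three pieces per super-node: the cost of \texttt{MergeRedNodes}, the cost of \texttt{Compress} (one low-rank factorization, e.g.\ an SVD, of a matrix whose row dimension is $\mathcal{O}((\kappa_1+\kappa_2)\,d_i)$ and column dimension $d_i$, followed by the edge updates), and the cost of the two \texttt{Eliminate} calls (a dense factorization of an $\mathcal{O}(d_i)\times\mathcal{O}(d_i)$ pivot block plus Schur-complement updates against its $\mathcal{O}(\kappa_1+\kappa_2)$ neighbors). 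By \cref{lem:sparse} each node has at most $\kappa_1+\kappa_2=\mathcal{O}(1)$ interaction edges at any time, so every one of these operations costs $\mathcal{O}(d_i^{3})$ for a single super-node at level $i$, with the constant depending only on $\kappa_1,\kappa_2$.

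Next I would sum over nodes. At level $i$ there are $2^{i-1}$ super-nodes, so the work at level $i$ is $\mathcal{O}(2^{i-1} d_i^{3})$. Substituting $d_i \le \alpha^{l-i} d_l$ with $d_l=\mathcal{O}(1)$ gives a per-level bound of $\mathcal{O}\!\left(2^{i} \alpha^{3(l-i)}\right)$. Writing $j = l-i$ for the number of levels above the leaves, this is $\mathcal{O}\!\left(2^{l-j}\alpha^{3j}\right) = \mathcal{O}\!\left(2^{l}(\alpha^{3}/2)^{j}\right)$. Since $0<\alpha<\sqrt[3]{2}$ we have $\alpha^3/2 < 1$, so the geometric series $\sum_{j\ge 0}(\alpha^{3}/2)^{j}$ converges to a constant, and the total factorization cost is $\mathcal{O}(2^{l})=\mathcal{O}(n)$ because $2^{l} d_l = \mathcal{O}(n)$ by the second part of \cref{eqn:geometric}. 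The same bookkeeping applied to \cref{alg:solve} — where each node contributes $\mathcal{O}(d_i^{2})$ for the triangular solves and neighbor updates rather than $\mathcal{O}(d_i^{3})$ — gives total solve cost $\mathcal{O}\!\left(\sum_i 2^{i} \alpha^{2(l-i)}\right)=\mathcal{O}(2^{l})=\mathcal{O}(n)$ since $\alpha^{2}/2<1$ as well; likewise the memory is $\mathcal{O}\!\left(\sum_i 2^{i} d_i^{2}\right)=\mathcal{O}(n)$, so storage is linear too.

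The step I expect to be the main obstacle is not the summation but pinning down precisely that the per-node cost is genuinely $\mathcal{O}(d_i^{3})$ with a constant independent of the level and of $n$. This requires being careful that (i) the low-rank rank $r$ produced in \cref{eqn:lra} is itself $\mathcal{O}(d_i)$, so that the newly created black-node $b_j^{[i]}$ and its red parent $\mathbb{P}(b_j^{[i]})$ have sizes bounded consistently with the $d_i$ (respectively $d_{i-1}$) growth hypothesis — this is exactly what the assumption \cref{eqn:geometric} encodes, so the content of the theorem is really the bookkeeping once that hypothesis is granted; and (ii) the edges created by compression and elimination never accumulate beyond the $\kappa_1+\kappa_2$ bound, which is guaranteed by \cref{lem:sparse} and \cref{cor:dist} since only non-well-separated (distance $\le 2$) edges survive, and these reside among an $\mathcal{O}(1)$-sized neighborhood determined by the fixed sparsity pattern of $A$. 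Once these two points are in hand, the theorem follows from the convergence of the geometric series in $\alpha^{3}/2$.
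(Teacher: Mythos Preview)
Your proposal is correct and follows essentially the same approach as the paper: bound the per-super-node cost by $\mathcal{O}(d_i^3)$ using the sparsity guarantee from \cref{lem:sparse}, sum $\sum_{i=1}^l 2^{i-1} d_i^3$, substitute the geometric bound \cref{eqn:geometric}, reindex by $j=l-i$, and observe that $\sum_j (\alpha^3/2)^j$ converges when $\alpha<\sqrt[3]{2}$ to obtain $\mathcal{O}(2^l d_l^3)=\mathcal{O}(n)$. Your additional care about why the rank $r$ and the neighbor count stay bounded, and your separate treatment of the solve cost and memory via the analogous $\alpha^2/2$ series, go slightly beyond what the paper spells out but are fully in keeping with its argument.
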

\begin{proof}
For a given super-node at level $i$ the compression cost is $\mathcal{O}\left(\kappa_2 \blue{d_i^3} \right)$, and the elimination cost is $\mathcal{O}\left(\kappa_1^2 \blue{d_i^3} \right)$. Note that the required memory scales with $\blue{d_i^2}$ for each node. Ignoring the constant factors $\kappa_1$ and $\kappa_2$, the order of the total cost of factorization is as follows:
\begin{equation} \label{eqn:cost}
\text{factorization cost} = \mathcal{O}\left( \sum_{i=1}^l 2^{i-1} \blue{d_i^3} \right)
\end{equation}
Plug \cref{eqn:geometric} in \cref{eqn:cost}:
\begin{equation} \label{eqn:complexity}
\begin{split}
\text{factorization cost} &= \mathcal{O}\left( \sum_{i=1}^l 2^{i-1} \alpha^{3(l-i)} \blue{d_l^3} \right)\\ 
&= \mathcal{O}\left( 2^{l-1} \blue{d_l^3} \sum_{i'=0}^{l-1} \left( \frac{\alpha^3}{2} \right)^{i'} \right) \quad\text{change of variable: $i'=l-i$}\\
&= \mathcal{O} ( 2^l \blue{d_l^3})
\end{split}
\end{equation}
Note that for $\alpha < \sqrt[3]{2}$ we have $\sum_{i'=0}^{l-1} \left( \frac{\alpha^3}{2} \right)^{i'} = \mathcal{O}(1)$. Furthermore, $\blue{d_l}$ is the number of variables in super-nodes at \blue{the} leaf level which is $\mathcal{O}({n}/{2^l} )$. Therefore:
\begin{equation}
\text{factorization cost} = \mathcal{O} \left( n \blue{d_l^2} \right),\quad \text{factorization memory} = \mathcal{O} \left( n \blue{d_l} \right)
\end{equation}
\end{proof}
\section {Numerical results} \label{sec:numerical}
We have implemented the algorithm described in \cref{sec:alg} in C++. The code (we call it LoRaSp\footnote{{\bf Lo}w {\bf Ra}nk {\bf Sp}arse solver.}) can be downloaded from \url{bitbucket.org/hadip/lorasp}. We use Eigen \cite{eigenweb} as the backend for linear algebra calculations, and SCOTCH \cite{pellegrini1996scotch} for graph partitioning. We present results for various benchmarks, where LoRaSp is used as a direct solver, or as a preconditioner in conjunction with an iterative solver. 
\subsection{LoRaSp as a stand-alone solver} \label{sec:standalone}
In this section we employ LoRaSp as a stand-alone solver. The accuracy of the solver depends on the accuracy of the low-rank approximations during the compression step as explained in \cref{sec:comp}. Any low-rank approximation method can be used for the compression. Here, we use SVD. For every well-separated interaction, we first compute the SVD, and then truncate the singular values at some point. There are many possible criteria to truncate singular values. We discuss some possible criteria. \Cref{fig:decay} shows the decay of singular values for blocks corresponding to the interaction between randomly chosen well-separated nodes at different levels of an $\mathcal{H}$-tree. The tree corresponds to a matrix obtained from the second-order uniform discretization of the Poisson equation:
\begin{equation}
\begin{split}
\nabla . \left( \nabla T \right)  &= f \quad \text{ in }\mathcal{D}, \\
T &= {\boldsymbol{0}}  \quad \text{ on }\partial \mathcal{D}
\end{split}
\label{eqn:Pois}
\end{equation}
The domain $\mathcal{D}$ is a three-dimensional unit cube. The matrix size is 32,768, and the depth of the corresponding $\mathcal{H}$-tree is 11.
Evidently, singular values have exponential decay at different levels of the tree. The zero (up to machine precision) singular values are not shown in the plot.
\begin{figure}[htbp] \centering
\includegraphics[width=0.60\textwidth]{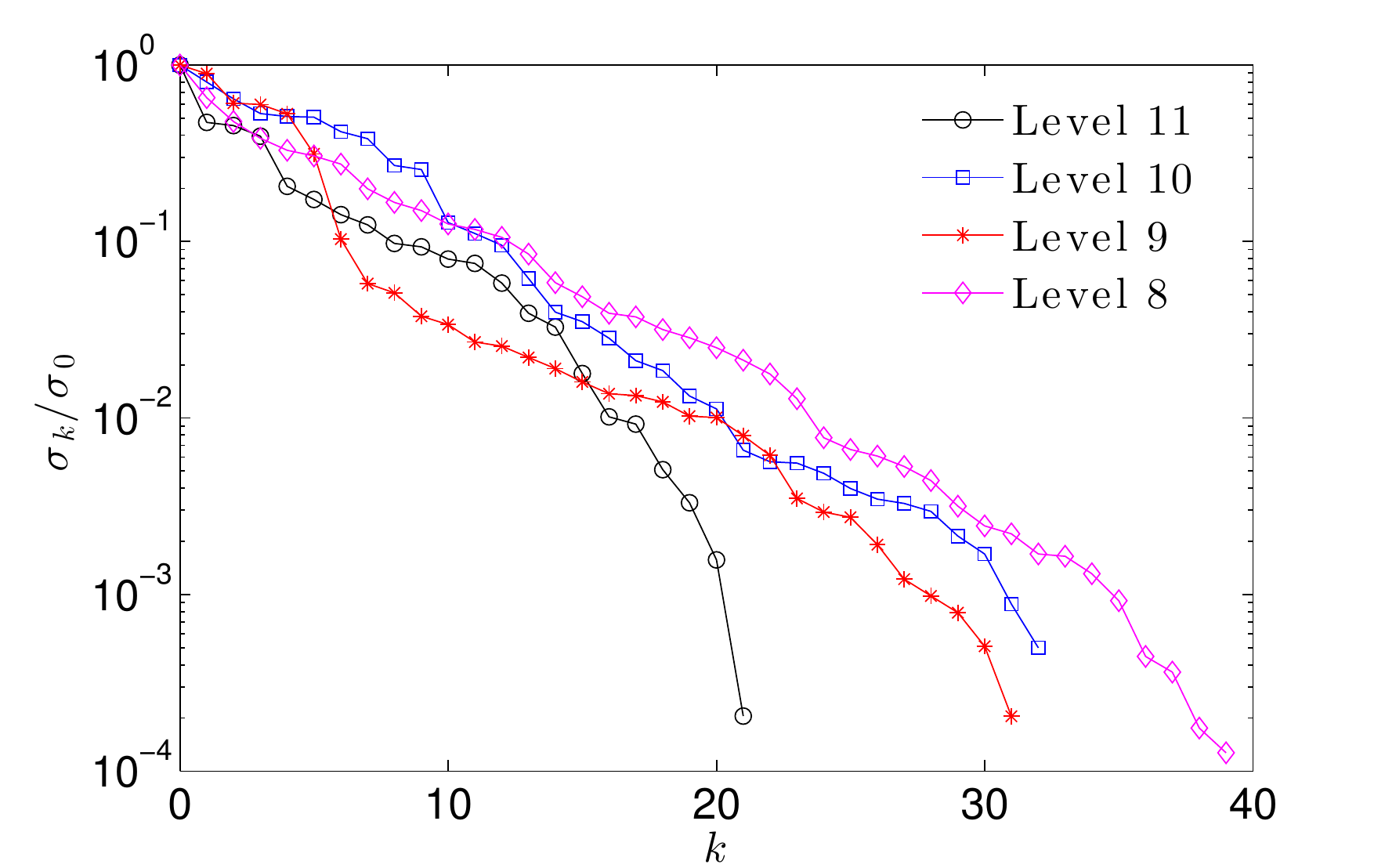}
\caption{Decay of singular values of random blocks corresponding to well-separated interactions \orange{(to be compressed)} at different levels of an $\mathcal{H}$-tree with 11 levels \orange{resulted from \cref{alg:fact}.}}
\label{fig:decay} 
\end{figure}

To demonstrate the linear complexity of the method, we considered a sequence of problems with a growing number of variables. Consider the following sequence of uniform discretization of the domain $\mathcal{D}$ in \cref{eqn:Pois}: $32\times32\times16$, $32\times32\times32$, $64\times32\times32$, $64\times64\times32$, $64\times64\times64$, $128\times64\times64$, and $128\times128\times64$. The matrix size is increased by a factor of 2 in the consecutive problems. Hence, to keep the size of the leaf super-nodes constant among all problems, we consider $\mathcal{H}$-trees with depth $10, 11, 12, 13, 14, 15, 16$ for this sequence of problems, respectively. In general, the depth of $\mathcal{H}$-tree should scale linearly with $\log_2 n$, where $n$ is the size of matrix. 

Well-separated edges corresponding to a block $B$\red{, as shown in \cref{eqn:lra},} with singular-values $\sigma_0, \sigma_1, ...$, are compressed by keeping only the singular-values that satisfy:
\begin{equation}
\frac{\sigma_k}{\sigma_0} \ge \epsilon
\label{eqn:cutOff1}
\end{equation}
Smaller values of $\epsilon$ lead a to more accurate approximation of each block, and consequently a more accurate approximation of the final solution. For a given linear system $A{\boldsymbol{x}}={\boldsymbol{b}}$, the precision of any solution $\tilde{\boldsymbol{x}}$ is quantified by the relative error and relative residual defined as follows
\begin{equation}
\text{error} = \frac{\| {\boldsymbol{x}} - \tilde{\boldsymbol{x}}\|_2} { \| {\boldsymbol{x}} \|_2 },\quad \text{residual} = \frac{ \| A\tilde{\boldsymbol{x}} - {\boldsymbol{b}} \|_2} {\| {\boldsymbol{b}} \|_2}
\label{eqn:accres}
\end{equation}
\Cref{fig:directSolve_varyN:sub2} shows that smaller values of $\epsilon$ (i.e., more accurate low-rank approximations) result in a more accurate estimation of the solution to the linear system in the cost of larger factorization and solve times, as shown in \cref{fig:directSolve_varyN:sub1}. For a constant $\epsilon$, the time spent for the factorization and solve parts \blue{are asymptotically linear with the problem size}. \red{Note that for smaller values of $\epsilon$ the linear scaling is achieved for larger values of $n$.} In addition, the error and residual of the estimated solution for a fixed $\epsilon$ barely change with the problem size (see \cref{fig:directSolve_varyN:sub2}).
\begin{figure}[htbp]
\begin{center}
\subfloat[]{\includegraphics[width=0.48\textwidth]{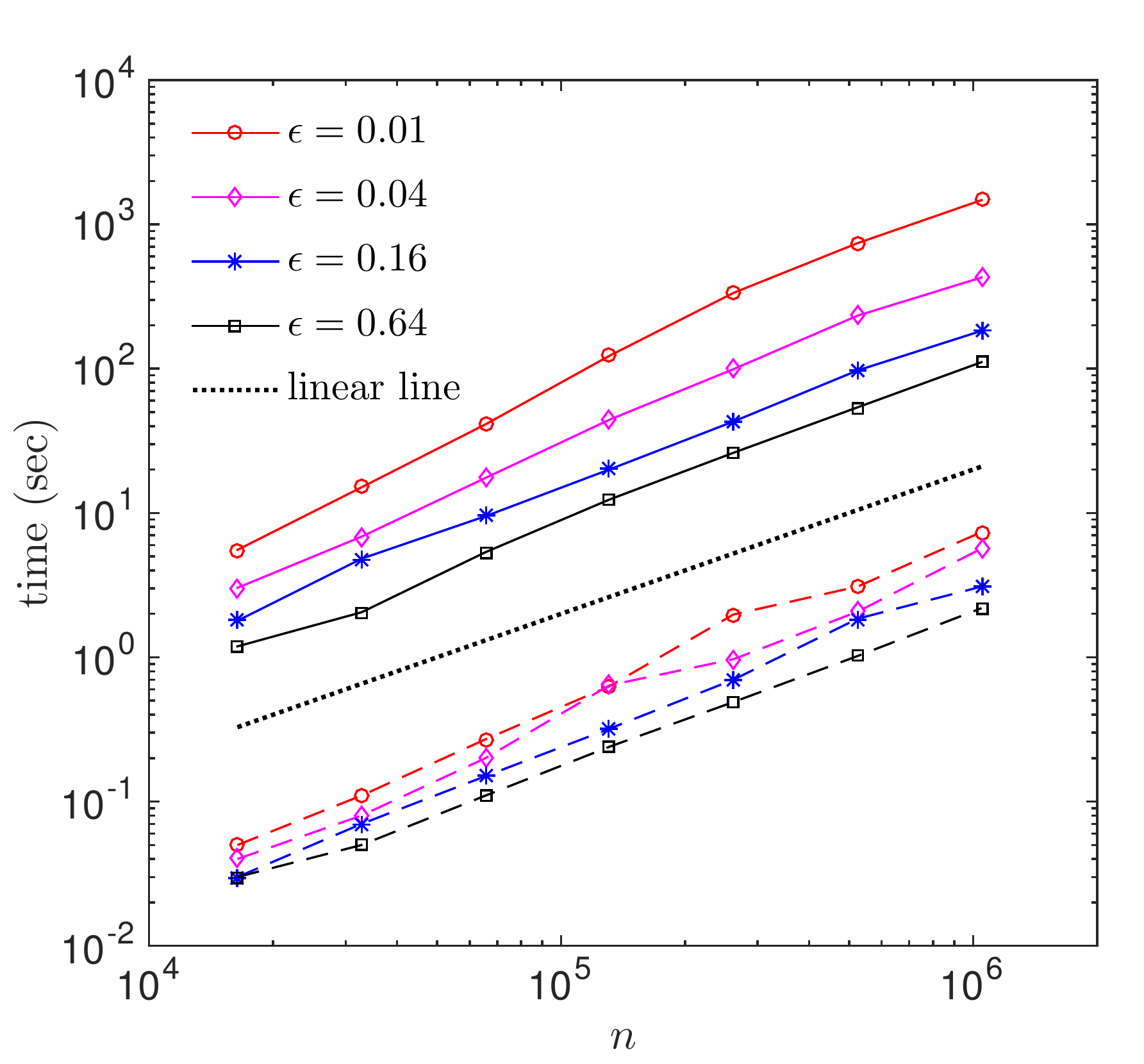}\label{fig:directSolve_varyN:sub1}} \quad
\subfloat[]{\includegraphics[width=0.48\textwidth]{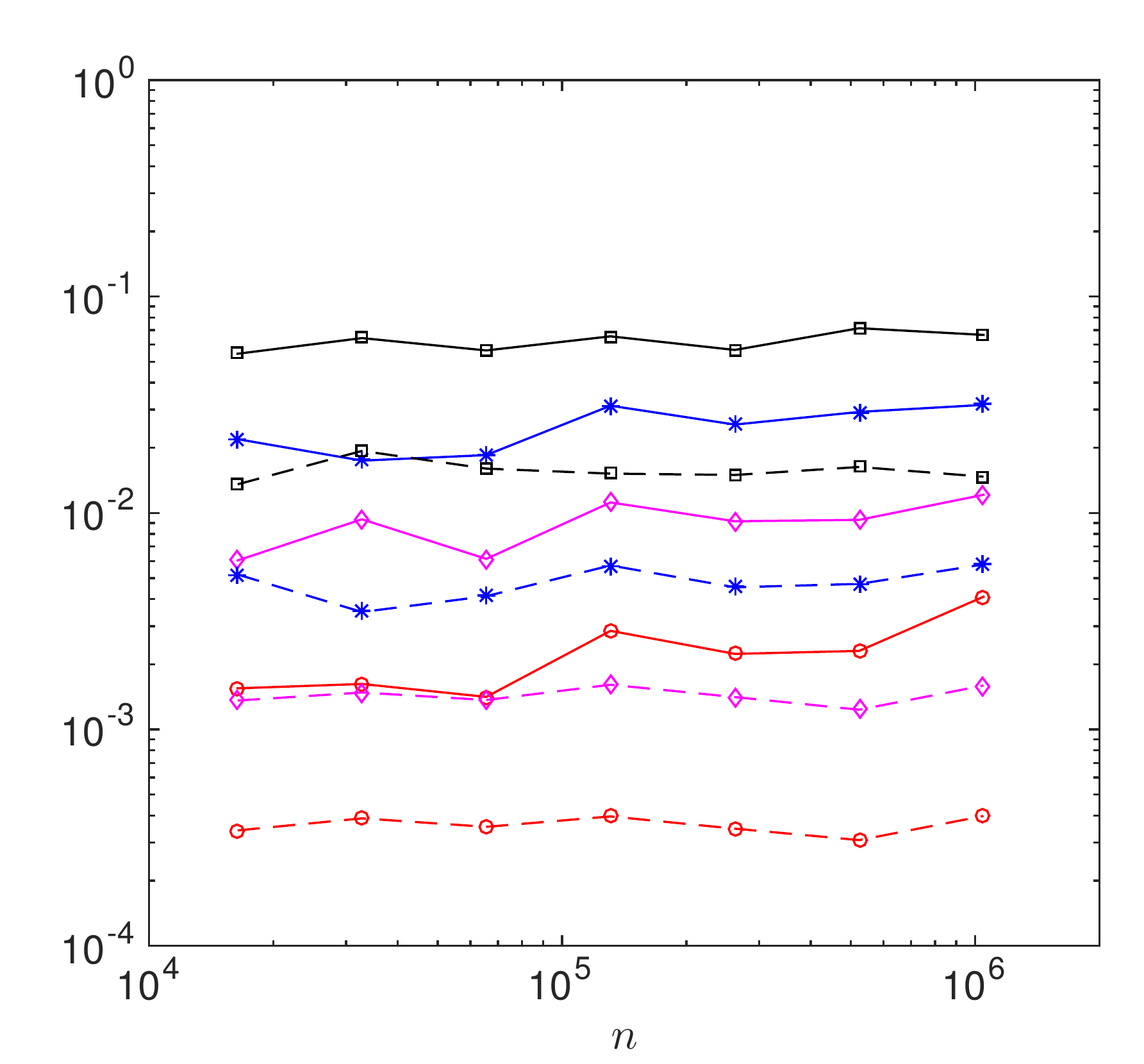}\label{fig:directSolve_varyN:sub2}}
\caption{\red{Performance of the solver for different problem sizes using different levels of precision (shown by different colors and symbols) in low-rank approximations. (a) Time spent on factorization (solid line) and solve (dashed line) parts, (b) error (solid line) and residual (dashed line) of the solution.}}
\label{fig:directSolve_varyN} 
\end{center}
\end{figure}

As it is clear from \cref{fig:directSolve_varyN}, we can obtain more accurate solutions by decreasing the parameter $\epsilon$ in \cref{eqn:cutOff1}. To show the convergence of the solver, we picked a fixed problem size, and measured the \blue{accuracy} of the estimated solution as $\epsilon$ decreases. In addition, for comparison purposes, we consider a 2D variation of \cref{eqn:Pois} which is discretized using a finite volume approach with Voronoi tessellation. The points are drawn from a random uniform distribution in the $\left[0,1\right]^2$ interval. \red{The discretization results in a matrix $A=DB$, where $D$ is a diagonal matrix with inverse of the Voronoi cells on the diagonal, and $B$ is a symmetric matrix. We apply the factorization directly to $B$.} Note that the average \red{number of non-zeros per row for a matrix corresponding to} a 2D Voronoi discretization is 7, which is the same as for a uniform second order 3D discretization.\footnote{We can show this by double counting the angles in a 2D Voronoi tessellation, once through points, and once through triangles of the corresponding Delaunay triangulations.}


In \cref{fig:convergence} the convergence of the solution for a 3D Poisson problem with \red{$n=1.3\times10^5$ (corresponding to a $64\times64\times32$ grid)} and a 2D Poisson problem with the same size (corresponding to Voronoi tessellation) are shown. The error and residual decrease proportional to the precision in the low-rank approximations, $\epsilon$. Furthermore, it is clear that the residual is smaller than the error. The ratio of the error to residual is generally an increasing function with respect to the condition number of the matrix. For the same number of points, the condition number of a 3D discretization is lower than that of a 2D discretization. Therefore, the error and residual are closer in the 3D case in comparison to the 2D case as illustrated in \cref{fig:convergence:sub2}.

\Cref{fig:convergence:sub1} demonstrates the factorization time as a function of the low-rank approximation precision, $\epsilon$. Three dimensionality leads to a higher number of well-separated interactions; hence, after every elimination more new fill-ins are introduced in the 3D case. \blue{This results in a higher factorization time compared to the 2D case.}

\begin{figure}[htbp]
\begin{center}
\subfloat[]{\includegraphics[width=0.48\textwidth]{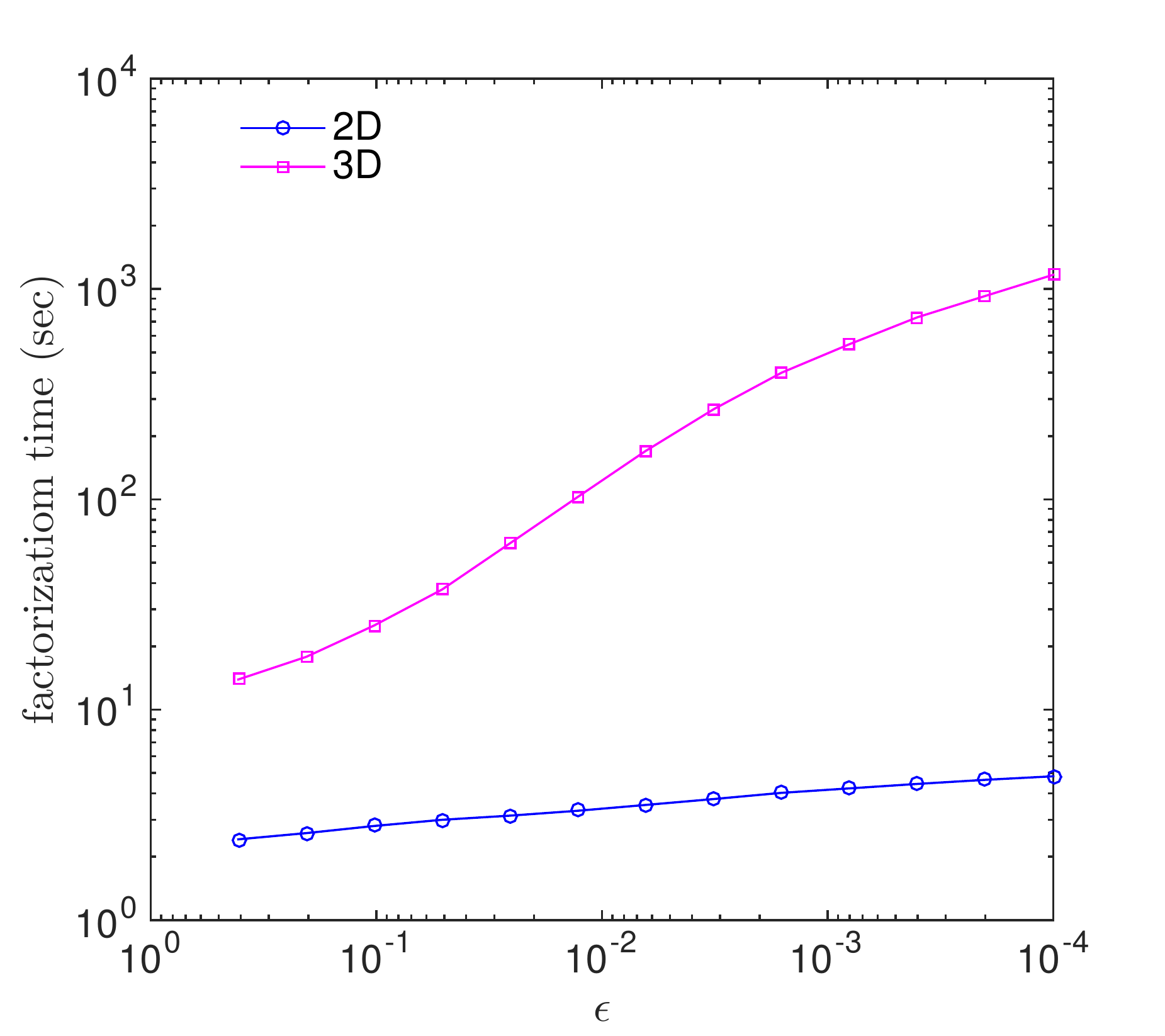}\label{fig:convergence:sub1}} \quad
\subfloat[]{\includegraphics[width=0.48\textwidth]{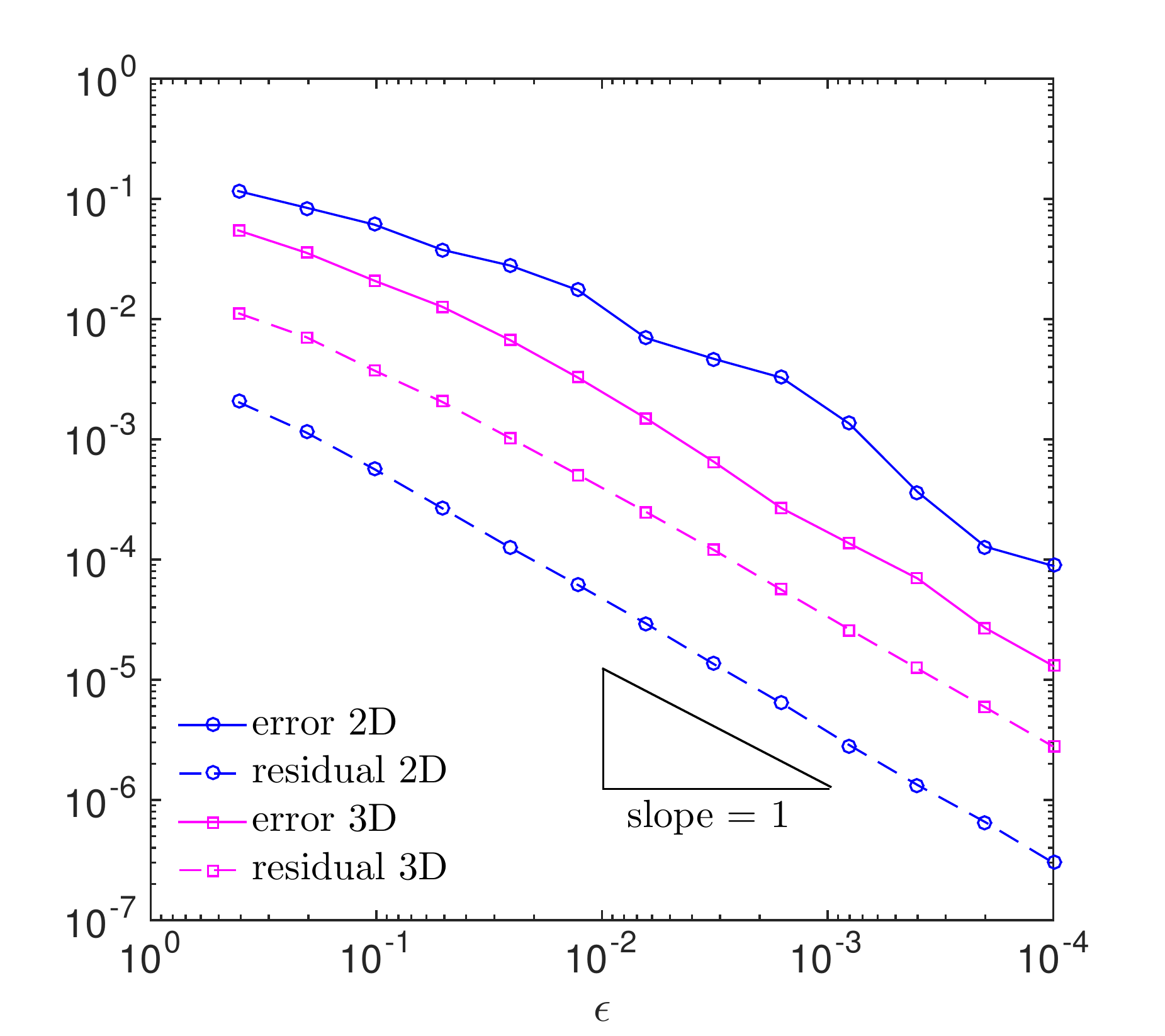}\label{fig:convergence:sub2}}
\caption{(a) Factorization time as a function of the low-rank approximation precision ($\epsilon$) for the 3D and 2D Poisson problems of size \red{$1.3\times10^5$}; (b) error and residual of the solution.}
\label{fig:convergence}
\end{center}
\end{figure}

\Cref{fig:time_components} shows the breakdown of the time spent on different parts of the algorithm, namely, low-rank approximation of well-separated blocks (here, SVD), general matrix multiplication (gemm), and computing the inverse of \red{pivot blocks}. Clearly, for the 3D case most of the time is spent on SVD, which is known to be an expensive algorithm for low-rank approximation. This can be improved significantly if faster low-rank approximation methods are employed. We discuss some of the alternative methods in \cref{sec:conclusion}.

In \cref{fig:ranks}, the average ranks of the well-separated interactions at each level are depicted as a function of the low-rank approximation precision. Note that for both the 3D and 2D cases an $\mathcal{H}$-tree with depth \red{$l=13$} is used, i.e., there are 16 variables per leaf red-nodes on average. The rank for the 3D case increases dramatically compared to the 2D case when a more accurate solution is desired. If \blue{$d_L$} is the maximum rank among all levels (i.e., maximum size of a red-node), similar to the analysis of \cref{lem:lincomp}, the factorization complexity is $\mathcal{O}(n \blue{d_L^2})$. From \cref{fig:decay,fig:ranks} we can observe that $\blue{d_L} = \mathcal{O} \left(\log \frac{1}{\epsilon} \right)$. Therefore, we have:
\begin{equation}
\text{factorization cost} = \mathcal{O} \left(n\log^2 \frac{1}{\epsilon} \right)\text{,} \quad \text{factorization memory} = \mathcal{O} \left( n \log \frac{1}{\epsilon}\right)
\end{equation}

\begin{figure}[htbp]
\begin{center}
\subfloat[]{\includegraphics[width=0.49\textwidth]{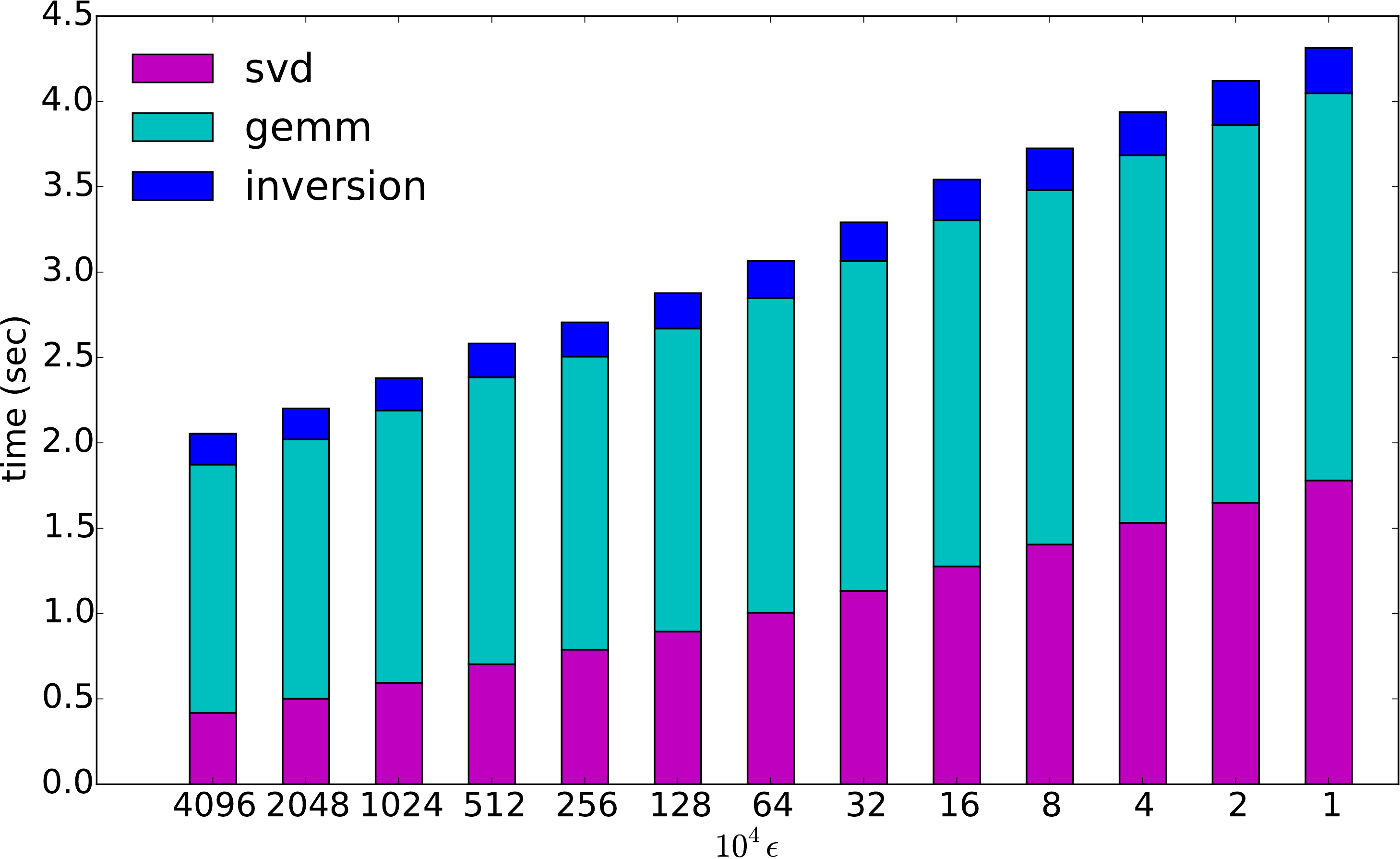}\label{fig:time_components:sub1}}
\subfloat[]{\includegraphics[width=0.49\textwidth]{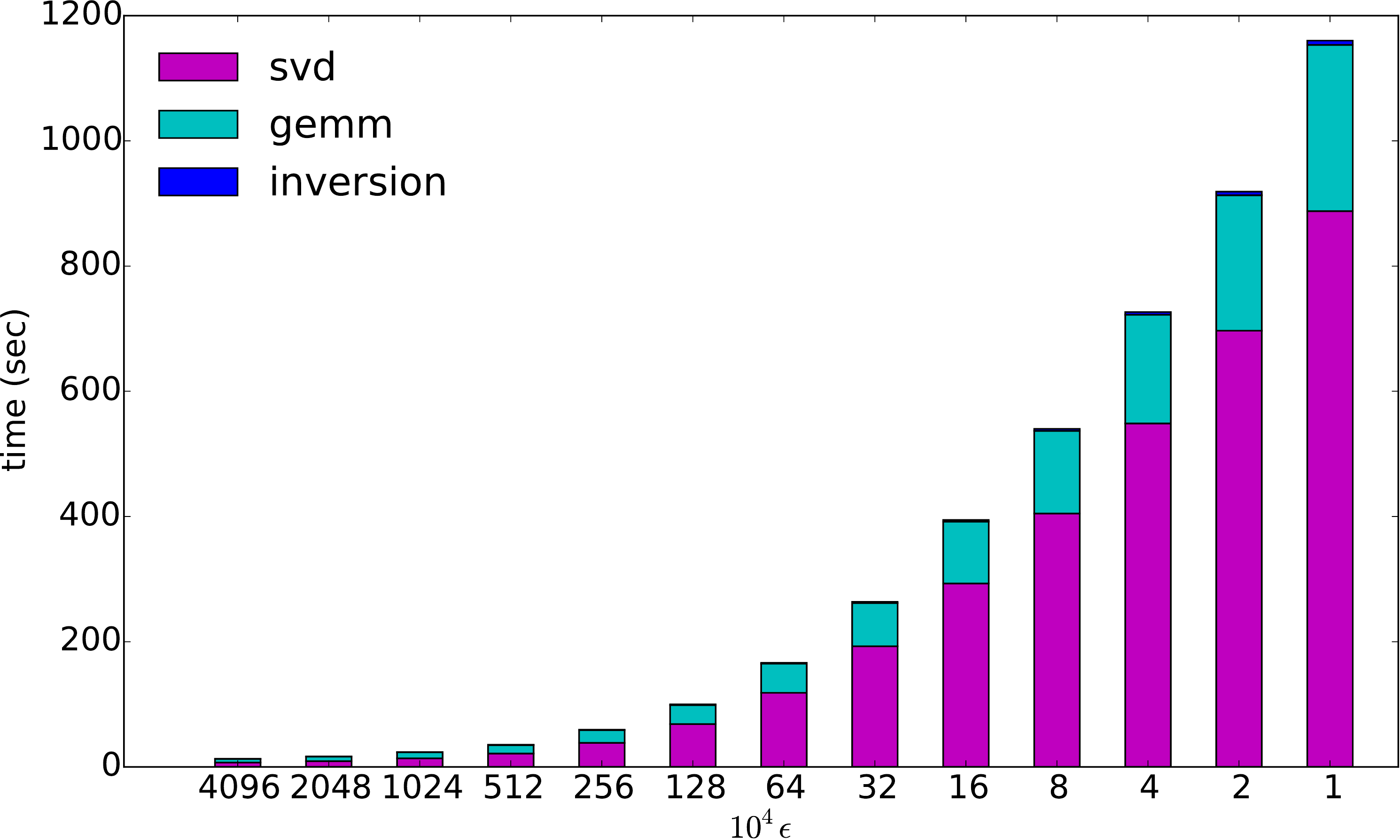}\label{fig:time_components:sub2}}
\caption{Breakdown of the time spent on different parts as a function of the low-rank approximation precision for the (a) 2D and (b) 3D cases.}
\label{fig:time_components}
\end{center}
\end{figure}

\begin{figure}[htbp]
\begin{center}
\subfloat[]{\includegraphics[width=0.45\textwidth]{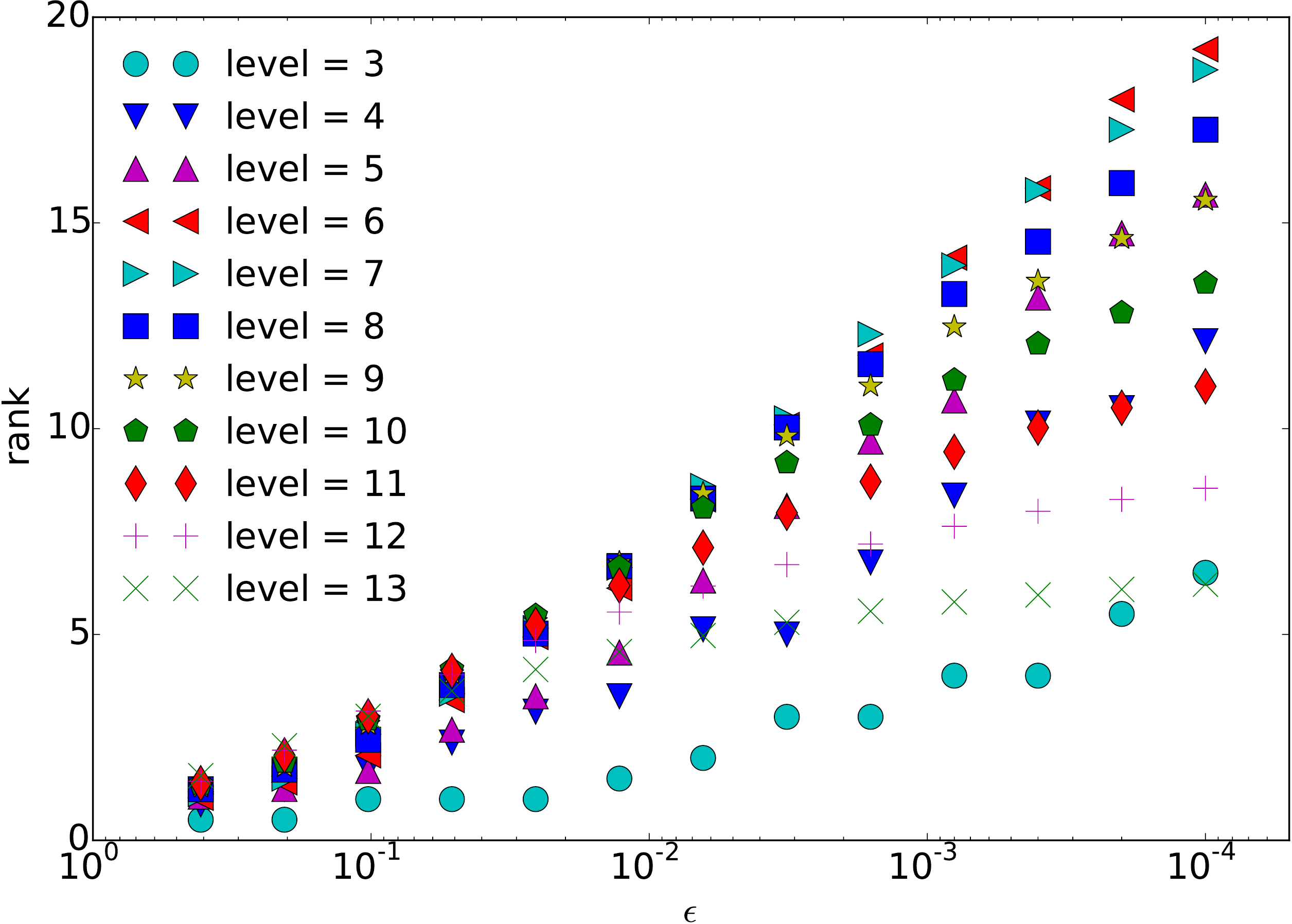}\label{fig:ranks:sub1}}
\subfloat[]{\includegraphics[width=0.45\textwidth]{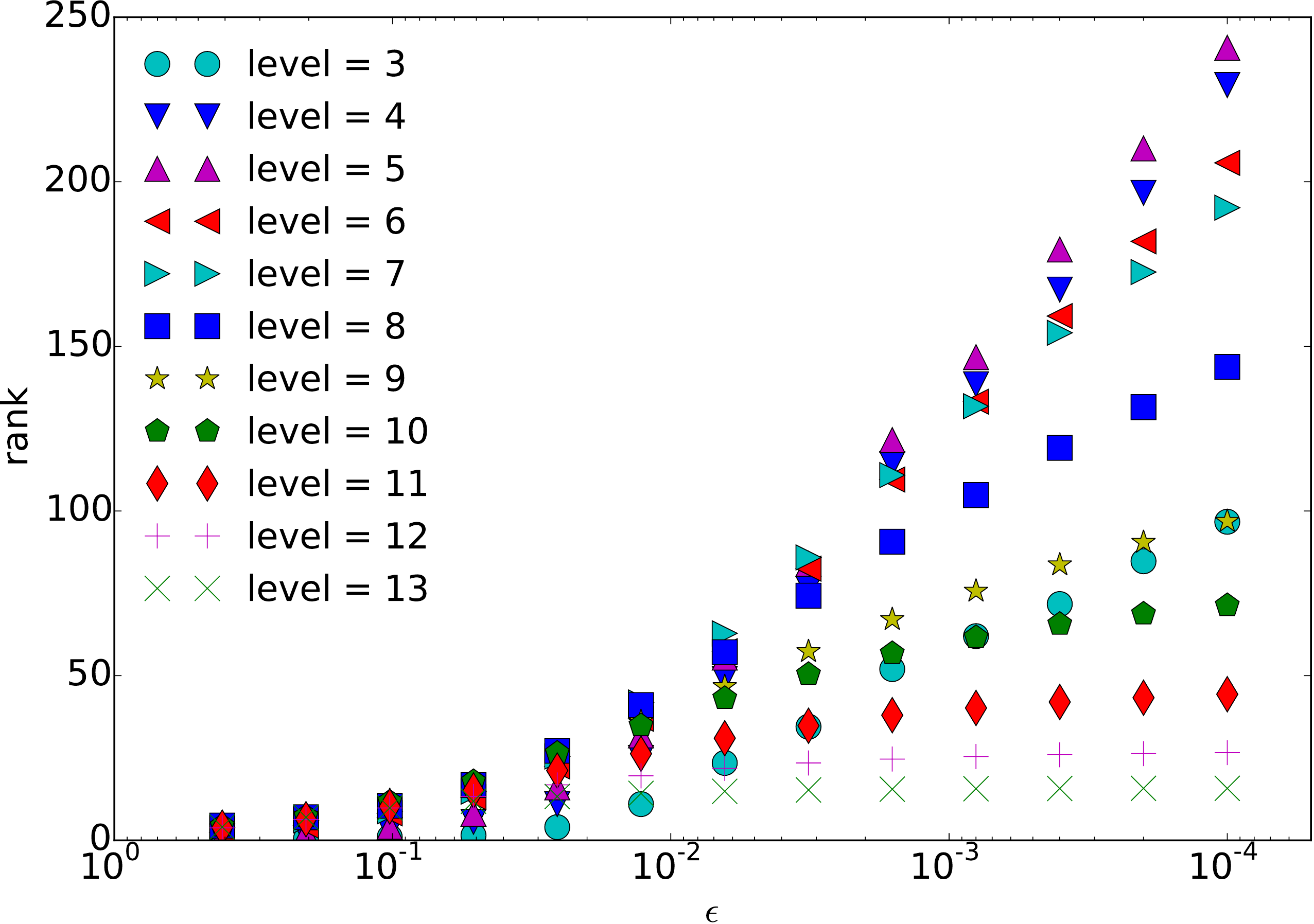}\label{fig:ranks:sub2}}
\caption{Average rank of well-separated interactions per level as a function of the low-rank approximation precision for the (a) 2D and (b) 3D cases.}
\label{fig:ranks}
\end{center}
\end{figure}

\blue{At the end of this section, we present another benchmark in which LoRaSp is used as a stand-alone solver to obtain solutions with floating point single precision accuracy. We consider \cref{eqn:Pois} discretized on uniform 2D grids of different sizes, and use the proposed solver with $\epsilon =10^{-4}$ for low-rank approximation as introduced in \cref{eqn:cutOff1}. For different problem sizes we pick depth of the $\mathcal{H}$-tree such that the average size of a super-node at leaf level is 64. The \orange{factorization and} solve times as functions of the matrix size \orange{are} depicted in \cref{fig:pois2d:sub1} demonstrating linear complexity of the solver. The relative residual of the solution, as defined in \cref{eqn:accres} in all cases is less than $10^{-6}$ (see \cref{fig:pois2d:sub2}).}

\begin{figure}[htbp]
\begin{center}
\subfloat[]{\includegraphics[width=0.45\textwidth]{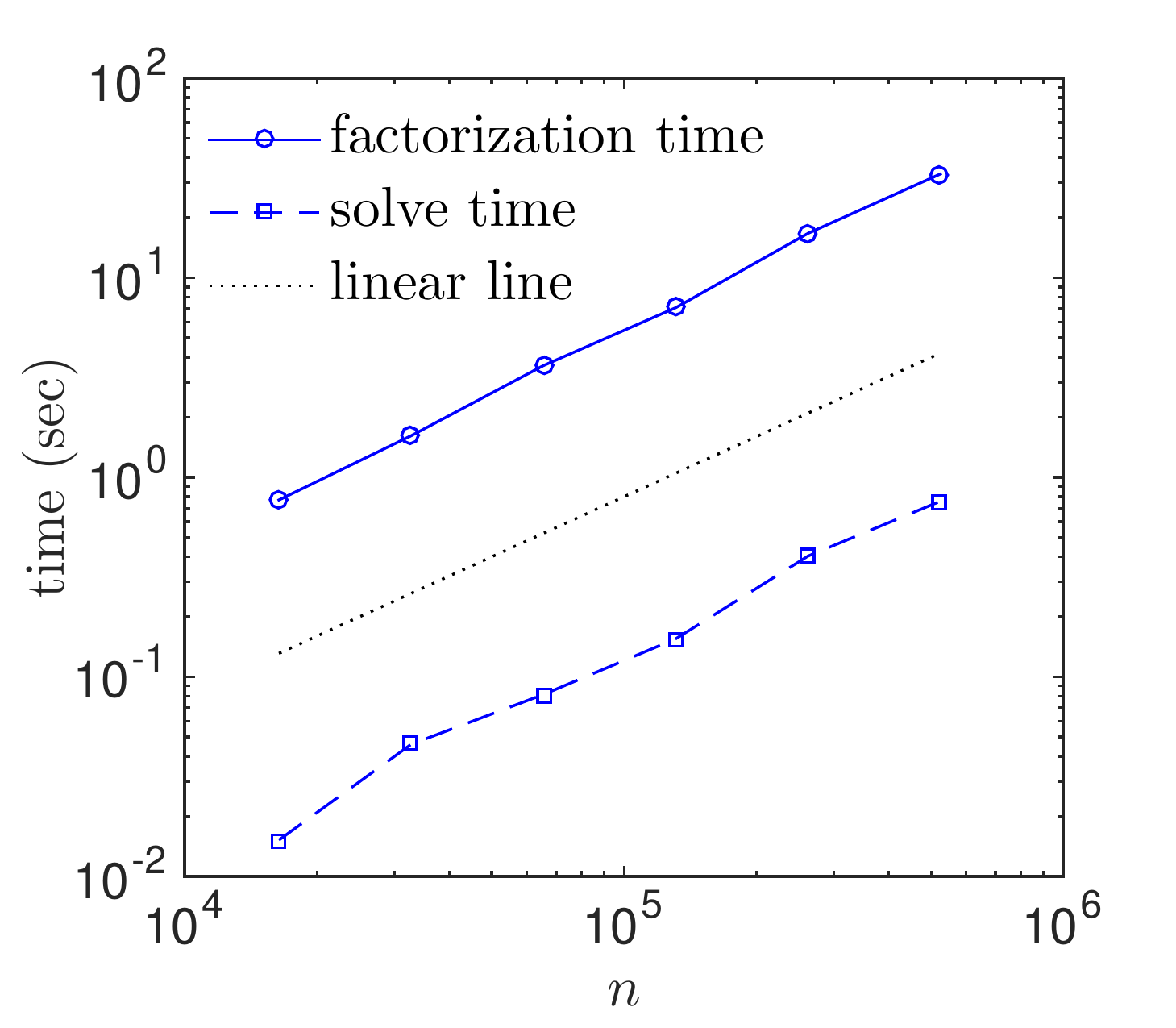}\label{fig:pois2d:sub1}}
\subfloat[]{\includegraphics[width=0.45\textwidth]{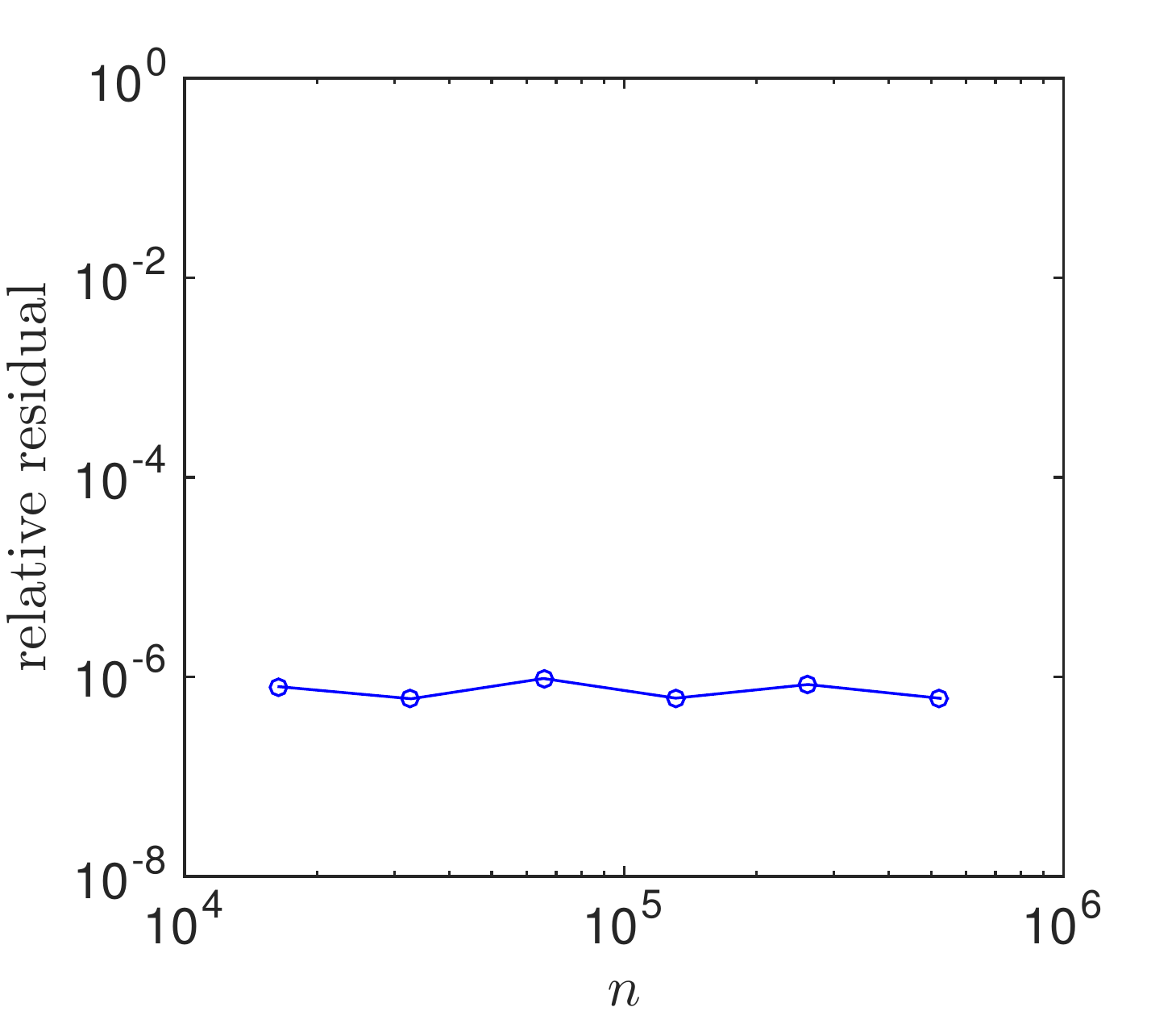}\label{fig:pois2d:sub2}}
\caption{\blue{LoRaSp \orange{factorization and} solve times as stand-alone solver for discretized Poisson equation on grids with different sizes (a), and the relative residual of the solution (b).} \orange{Low-rank precision is $\epsilon =10^{-4}$, resulting in relative residual less than $10^{-6}$ and relative error $\sim 10^{-4}$.}}
\end{center}
\end{figure}

\subsection{LoRaSp as a preconditioner} \label{sec:precond}
In \cref{sec:standalone} we showed that for a fixed low-rank approximation precision, and therefore solution accuracy, the total cost of the algorithm grows linearly with \red{the} problem size. However, as suggested by \cref{fig:convergence,fig:time_components} obtaining a high accuracy solution may be expensive for some problems. One standard remedy in that case is to use the low-accuracy solver as a high-accuracy preconditioner in conjunction with an iterative solver. This is particularly very appealing here, since the factorization part is completely separated form the solve part. Therefore, we can factorize the matrix only once, and apply the (cheap) solve part at every iteration. Here, we use the GMRES method \cite{saad1986gmres} as the iterative solver in conjunction with the proposed algorithm as a preconditioner \red{for all benchmarks. Note that for matrices with specific properties such as a symmetric positive definite (SPD) matrix, one can use a more optimized iterative method (e.g., conjugate gradient in the case of SPD matrix).}

\subsubsection{Poisson equation (structured grid)} \label{sec:Poisson}
As the first benchmark, we consider the sequence of 3D Poisson problems introduced in \cref{sec:standalone}. We use $\epsilon = 10^{-1}$ to factorize the matrix, and find an approximation $\tilde{A}$ of $A^{-1}$. Factorization and solve times are shown in \cref{fig:directSolve_varyN:sub1}. We solve the system of equation $\tilde{A}A{\boldsymbol{x}} = \tilde{A}{\boldsymbol{b}}$ through GMRES afterwards. Since the solve part of the proposed algorithm is much cheaper compared to the factorization part, each GMRES iteration is also relatively cheap. In \cref{fig:sparsity} the sparsity pattern of the original and preconditioned matrices are shown. The preconditioned matrix, $\tilde{A}A$, \blue{approaches} the identity matrix as $\epsilon$ decreases.

\begin{figure}[!htbp]
\begin{center}
\subfloat[]{\includegraphics[width=0.35\textwidth]{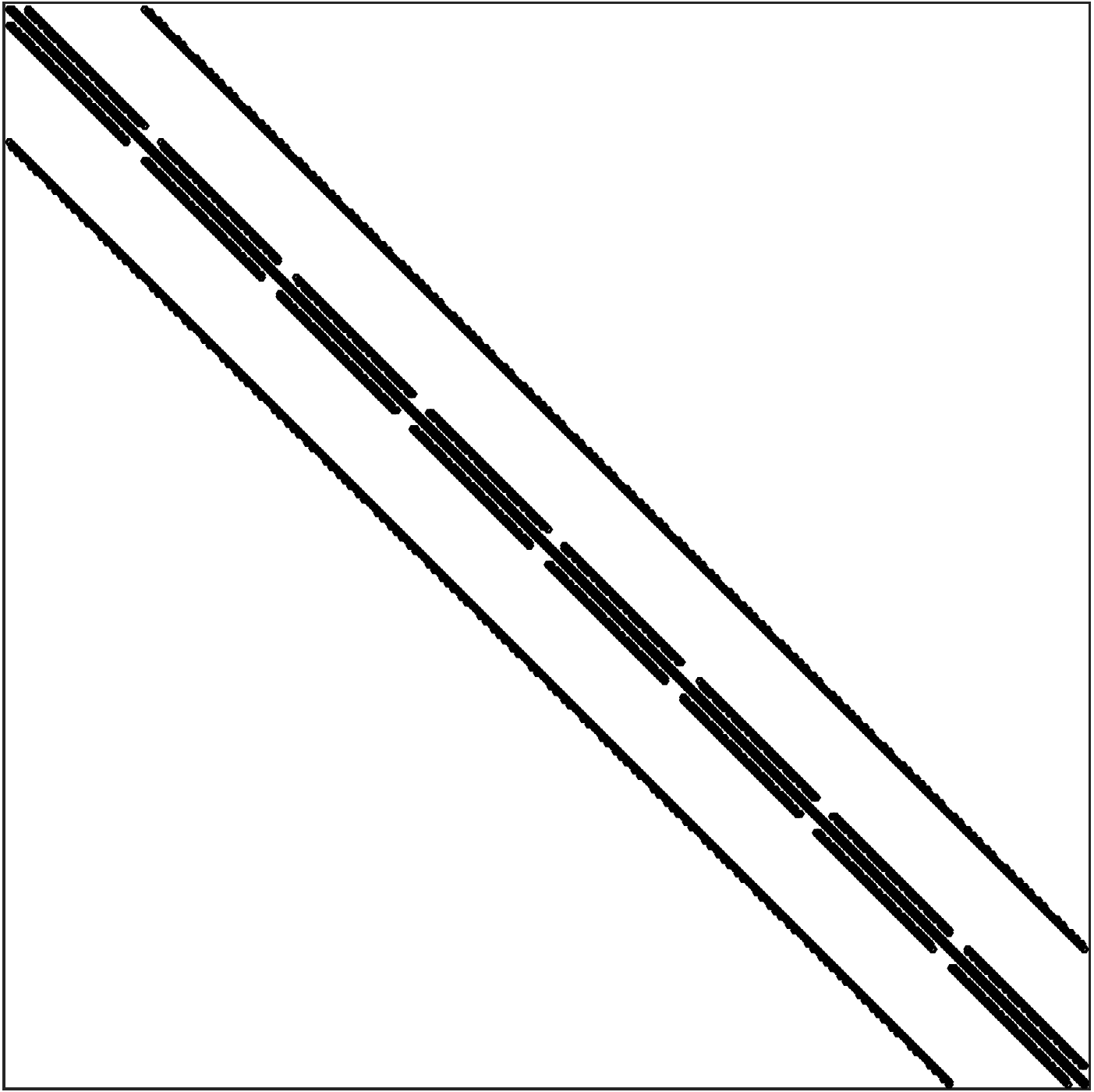}\label{fig:sparsity:sub1}} \quad
\subfloat[]{\includegraphics[width=0.38\textwidth]{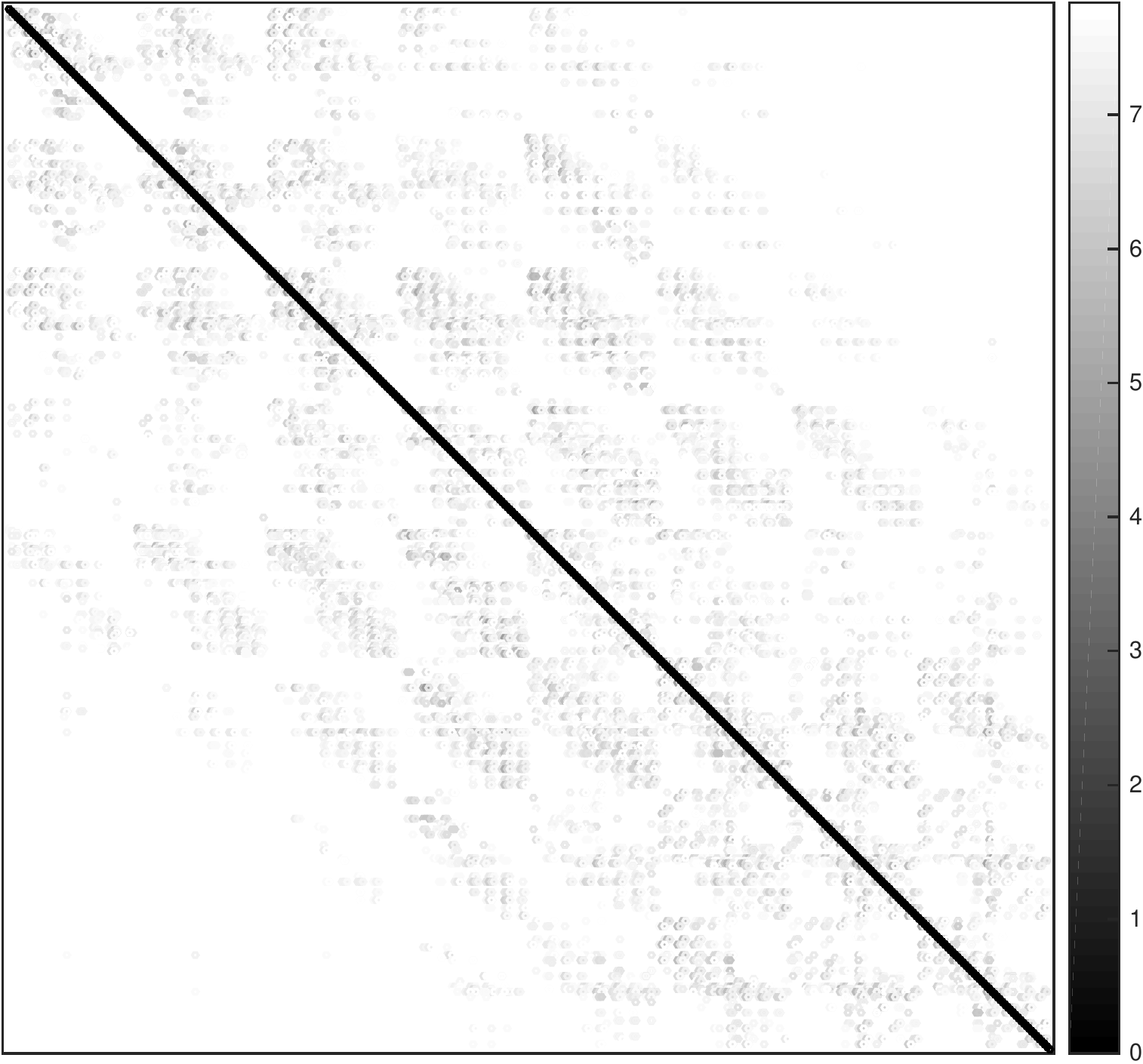}\label{fig:sparsity:sub2}}
\caption{\label{fig:sparsity}(a) Sparsity pattern of a matrix $A$ obtained from a discretization of \cref{eqn:Pois}. (b) Sparsity pattern of the preconditioned matrix $\tilde{A}A$ using LoRaSp with $\epsilon = 10^{-1}$.  A non-zero entry $a$ is colored by $-\log |a|$ \red{(i.e., larger values are darker)}. }
\end{center}
\end{figure}

In \cref{fig:residuals}, the GMRES residual as a function of the iteration number is plotted for different problem sizes, when LoRaSp with $\epsilon=10^{-1}$ is used as a preconditioner. At every iteration of the preconditioned GMRES method we have an approximation $\tilde{\boldsymbol{x}}$ of the solution. The residual in this case is defined as follows:
\begin{equation}
\text{preconditioned GMRES residual} = \frac{ \| \tilde{A}b - \tilde{A}A\tilde{x} \|_2 } { \| \tilde{A} b \|_2}
\label{eqn:GMRES_residual}
\end{equation}
The number of iterations that GMRES needs to converge slightly grows with size of the problem. This is due to growth of the condition number of the problem. \red{Similar to multi-grid methods, we can use specific knowledge about the underlying PDE and discretization to obtain problem-size independent convergence. This is the topic of our future work, and is not discussed in this paper.} In \cref{fig:cond} the condition number, $\kappa(A)$, is plotted as a function of the matrix size. The condition numbers are approximated using the 1-norm \cite{hager1984condition, higham2000block}. Note that for a matrix of size $n$ corresponding to the second order finite difference discretization of the Poisson equation, we expect the condition number to grow as $n^{2/3}$. The $n^{2/3}$ trend is also depicted in \cref{fig:cond}.

\begin{figure}[htbp] \centering
\includegraphics[width=0.9\textwidth]{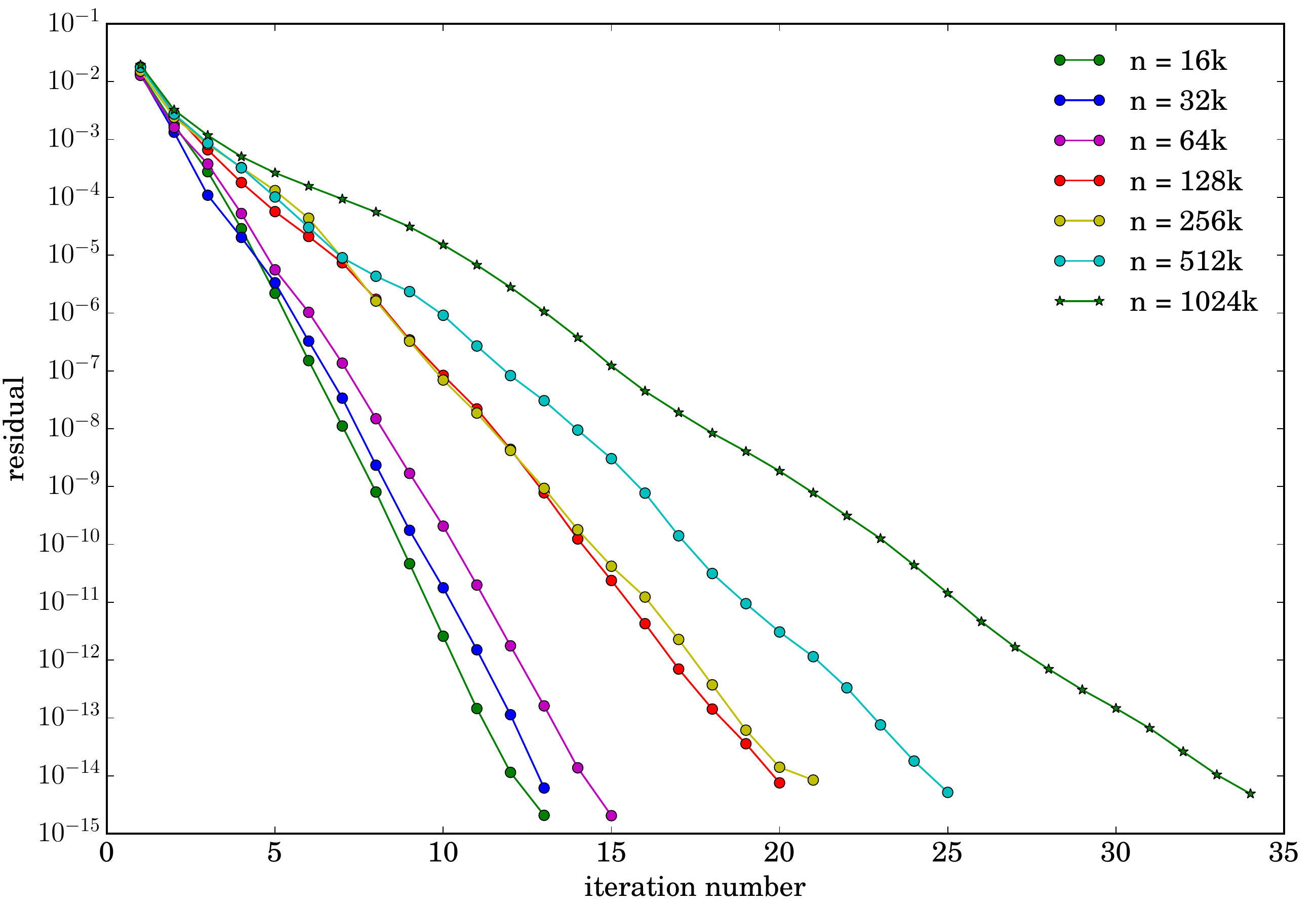}
\caption{GMRES residual as a function of the iteration number for different problem sizes. LoRaSp is used as a preconditioner with $\epsilon = 10^{-1}$.}
\label{fig:residuals}
\end{figure}

\begin{figure}[htbp] \centering
\includegraphics[width=0.4\textwidth]{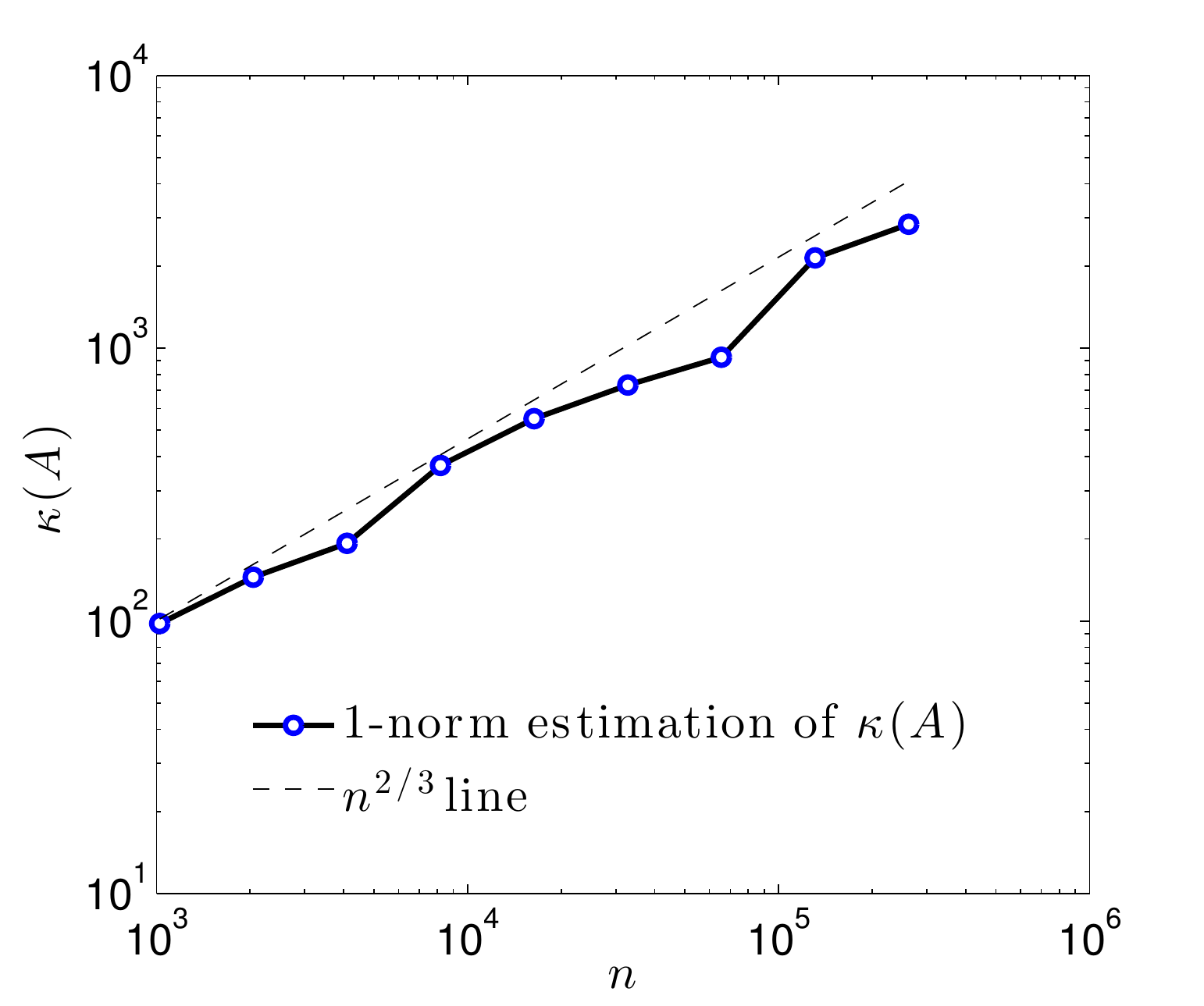}
\caption{Condition number versus matrix size for a sequence of matrices obtained from the discretization of \cref{eqn:Pois}.}
\label{fig:cond}
\end{figure}

\subsubsection{Variable coefficient Poisson equation (structured grid)}
As our next benchmark we consider the variable coefficient Poisson equation with periodic boundary conditions discretized on a three-dimensional uniform grid:
\begin{equation}
\nabla \cdot \left( \phi \nabla T \right) = f
\label{eqn:VCP}
\end{equation}
In the above equation, the scalar fields $\phi$ and $f$ are given, and we solve for $T$, similar to \cref{eqn:Pois}. We consider three cases for the coefficient field, $\phi$:
\begin{itemize}
\item {\bf case 1}: At each point of the domain, $\phi$ is drawn from a uniform distribution, $\text{unif}(0,1)$, independently.
\item {\bf case 2}: At each point of the domain, $\rho$ is drawn from a uniform distribution, $\text{unif}(0,1)$, independently. $\phi$ is then defined as $\phi = \frac{1}{\rho}$.
\item {\bf case 3}: At each point of the domain, $\phi$ is drawn from a uniform distribution, $\text{unif}(-1,1)$, independently.
\end{itemize}
For the two first cases, the corresponding matrices are symmetric negative definite. Case 2 shows up in the numerical simulation of a variable-density flow in the low-Mach number limit \cite{choi1993application, guillard1999behaviour, pouransari2015parallel}, where in that case $T$ and $\rho$ are hydrodynamic pressure and density of the flow, respectively. The third case, however, results in an indefinite matrix. In \cref{fig:VCP_prop:sub1}, the norm of the eigenvalues of the matrices corresponding to a $16^3$ grid for all cases are shown. In the third case, nearly half of the eigenvalues are positive, and half of them are negative, corresponding to the left and right sides of the red curve in \cref{fig:VCP_prop:sub1}.

\Cref{fig:VCP_prop:sub2} shows the 1-norm approximation of the condition number of the matrices for all cases. Evidently, a larger grid results in a higher condition number. Also, as expected, the condition number in case 2 is higher than case 1, and the condition number in case 3 is higher than case 2.

\begin{figure}[htbp]
\begin{center}
\subfloat[]{\includegraphics[width=0.47\textwidth]{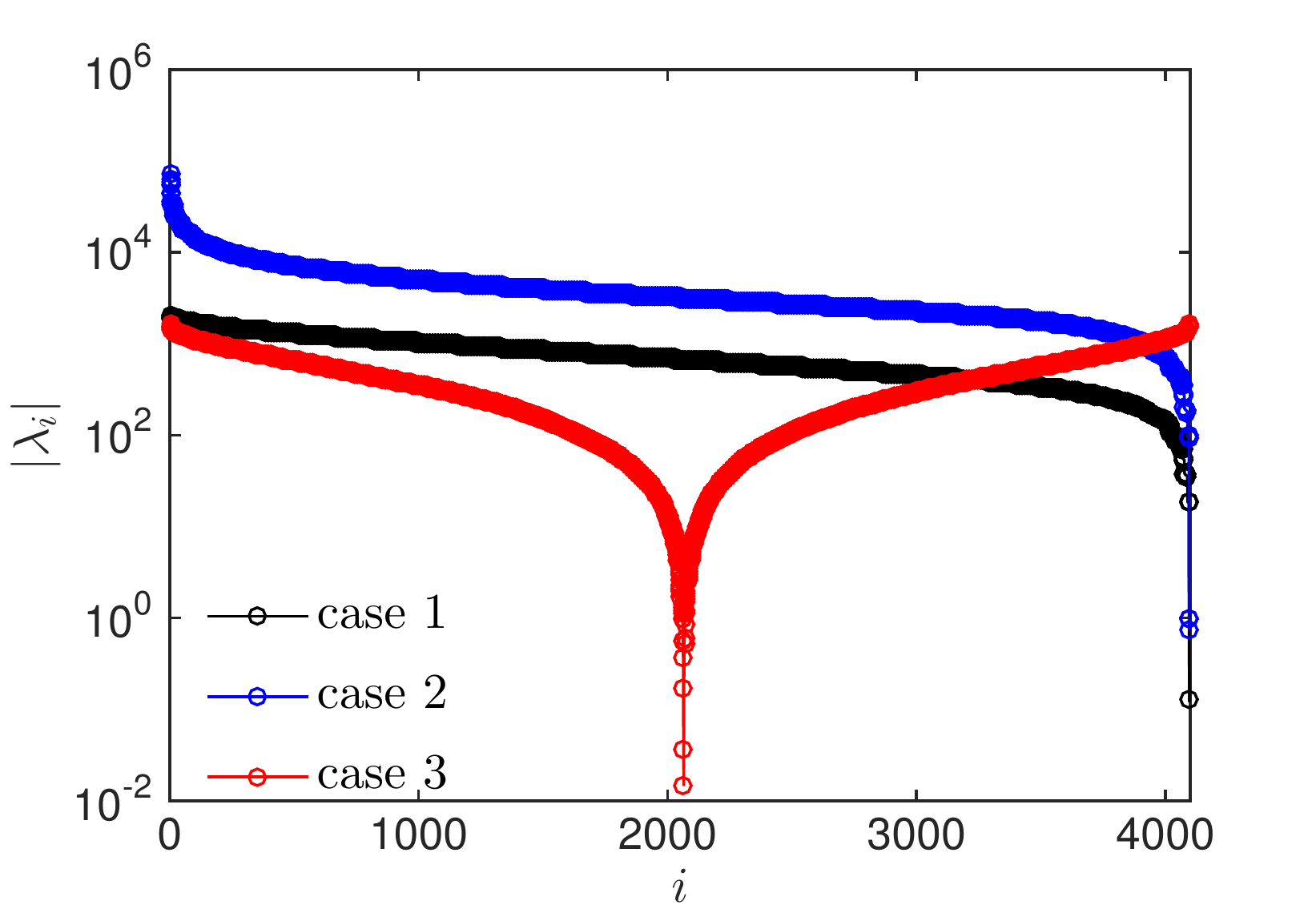}\label{fig:VCP_prop:sub1}} \quad
\subfloat[]{\includegraphics[width=0.47\textwidth]{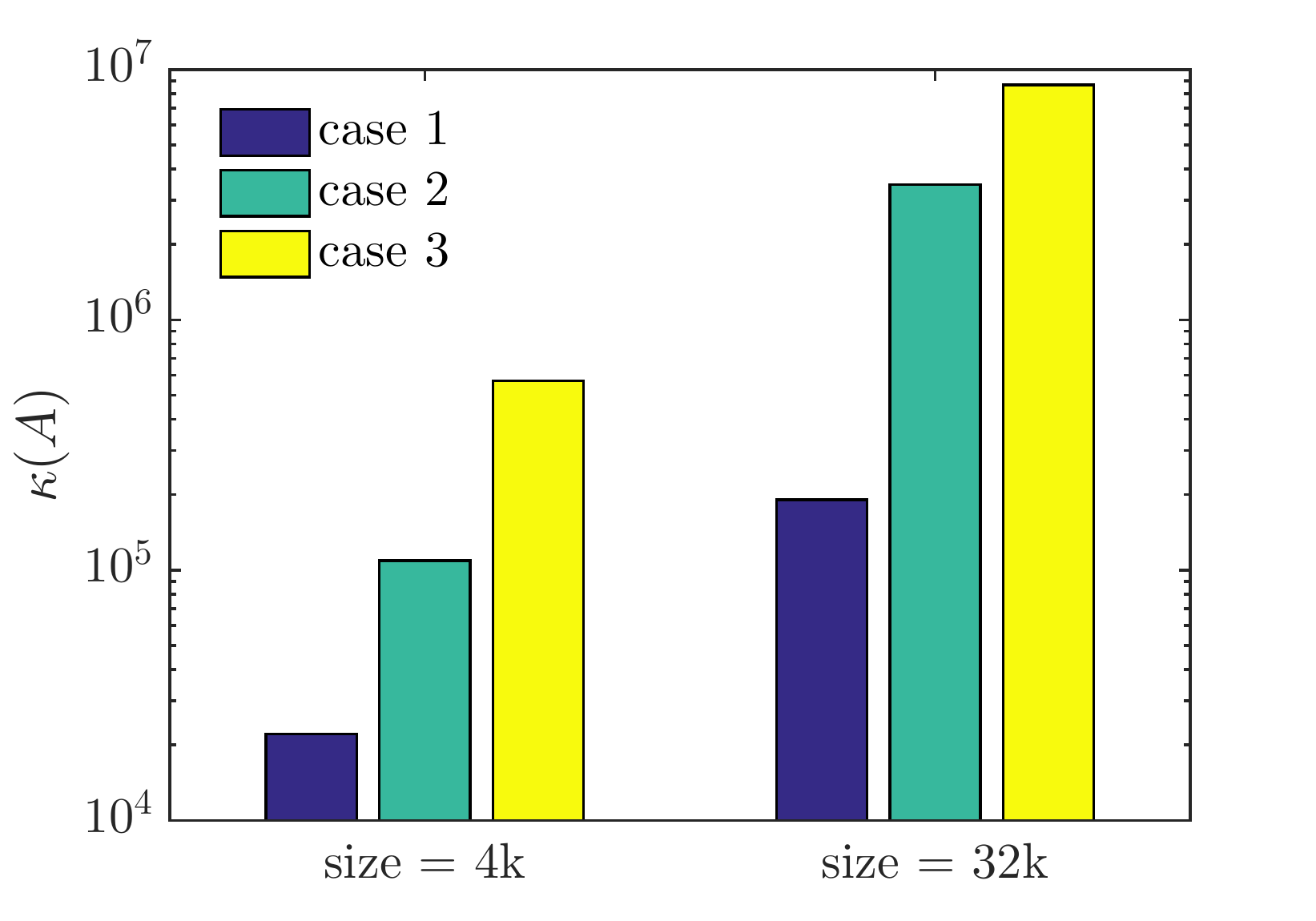}\label{fig:VCP_prop:sub2}}
\caption{\label{fig:VCP_prop} Properties of the matrices corresponding to the discretization of \cref{eqn:VCP}: (a) Absolute value of the eigenvalues for the $16^3$ grid. (b) The 1-norm approximation of the condition number for $16^3$ and $32^3$ grids.}
\end{center}
\end{figure}

We used LoRaSp as a preconditioner in conjunction with GMRES. A summary of the results for the two first cases is provided in \cref{tab:VCP}. The convergence criterion of  GMRES, with residual defined in \cref{eqn:GMRES_residual}, is set to $10^{-14}$. For cases 1 and 2, we used LoRaSp with low-rank precision $\epsilon = 10^{-1}$ as defined in \cref{eqn:cutOff1}. Similar to \cref{sec:Poisson}, we increase the depth of the $\mathcal{H}$-tree with $\log n$, where $n$ is the size of matrix. Similar to the results presented in \cref{sec:Poisson}, the factorization time has an almost linear complexity with the size of the matrix. As depicted in \cref{fig:VCP_prop:sub2}, the condition number grows rapidly from case 1 to case 2, and with the size of the matrix. This explains a slight growth of the number of iterations, and relative error.

\begin{table}[!htbp]
  \begin{center}
  \begin{tabular}{lccccc}
      \toprule
      \bf case &\bf  fact.\ time &\bf GMRES time & \bf tot.\ time& \bf \# iters & \bf rel.\ error\\\midrule
	{\bf 1 (4k)} & 0.44 & 0.06 & 0.50& 10 & 1.7e-11\\
	{\bf 1 (32k)} & 4.62 & 1.07 & 5.69& 15 & 2.6e-10\\
	{\bf 1 (256k)} & 37.08 & 16.21 & 53.29& 15 & 8.6e-10\\
	{\bf 2 (4k)} & 0.40 & 0.07 & 0.47& 12 & 6.6e-11\\
	{\bf 2 (32k)} & 3.72 & 1.31 & 5.03& 20 & 9.2e-10\\
	{\bf 2 (256k)} & 33.52 & 19.44 & 52.95& 30 & 1.2e-9\\ \bottomrule
  \end{tabular}
  \caption{GMRES performance using LoRaSp as a preconditioner to solve a variable coefficient Poisson equation. Matrix sizes, corresponding to $16^3$, $32^3$, and $64^3$ grids are written in parentheses. Times are reported in seconds.}
  \label{tab:VCP}
  \end{center}
\end{table}

Case 3 corresponds to an indefinite matrix, with a large condition number. This is typically a more difficult problem, compared to the first two cases. Since case 3 is inherently a harder problem compared to cases 1 and 2, we choose a higher low-rank precision $\epsilon = 10^{-3}$. In \cref{fig:VCP_indef:sub1}, the averaged rank of interactions for each level is plotted. We also plot the compression ratio for each level, which is defined as follows:
\begin{equation}
\text{compression ratio}~(l) = \frac{ \langle \text{interaction rank} \rangle_l} { \langle \text{size of super-nodes} \rangle_l},
\end{equation} 
where $\langle~\rangle_l$ denotes averaging in level $l$ of the $\mathcal{H}$-tree. Note that it is clear from \cref{fig:VCP_indef:sub1} that even though the rank is increasing, the compression ratio is approximately 0.6--0.7 for all levels. Hence, the algorithm takes advantage of low-rank structures appropriately.

In \cref{fig:VCP_indef:sub2}, the preconditioned GMRES residual defined in \cref{eqn:GMRES_residual} is plotted as a function of the iteration number. GMRES for case 3 does not converge \red{when no preconditioner or a diagonal preconditioner is used.} We also applied ILU \cite{saad1994ilut} as a preconditioner for GMRES. We tried various (including very large) values for the fill parameter in ILU. No convergence was obtained when ILU is used as a preconditioner.

\begin{figure}[htbp]
\begin{center}
\subfloat[]{\includegraphics[width=0.42\textwidth]{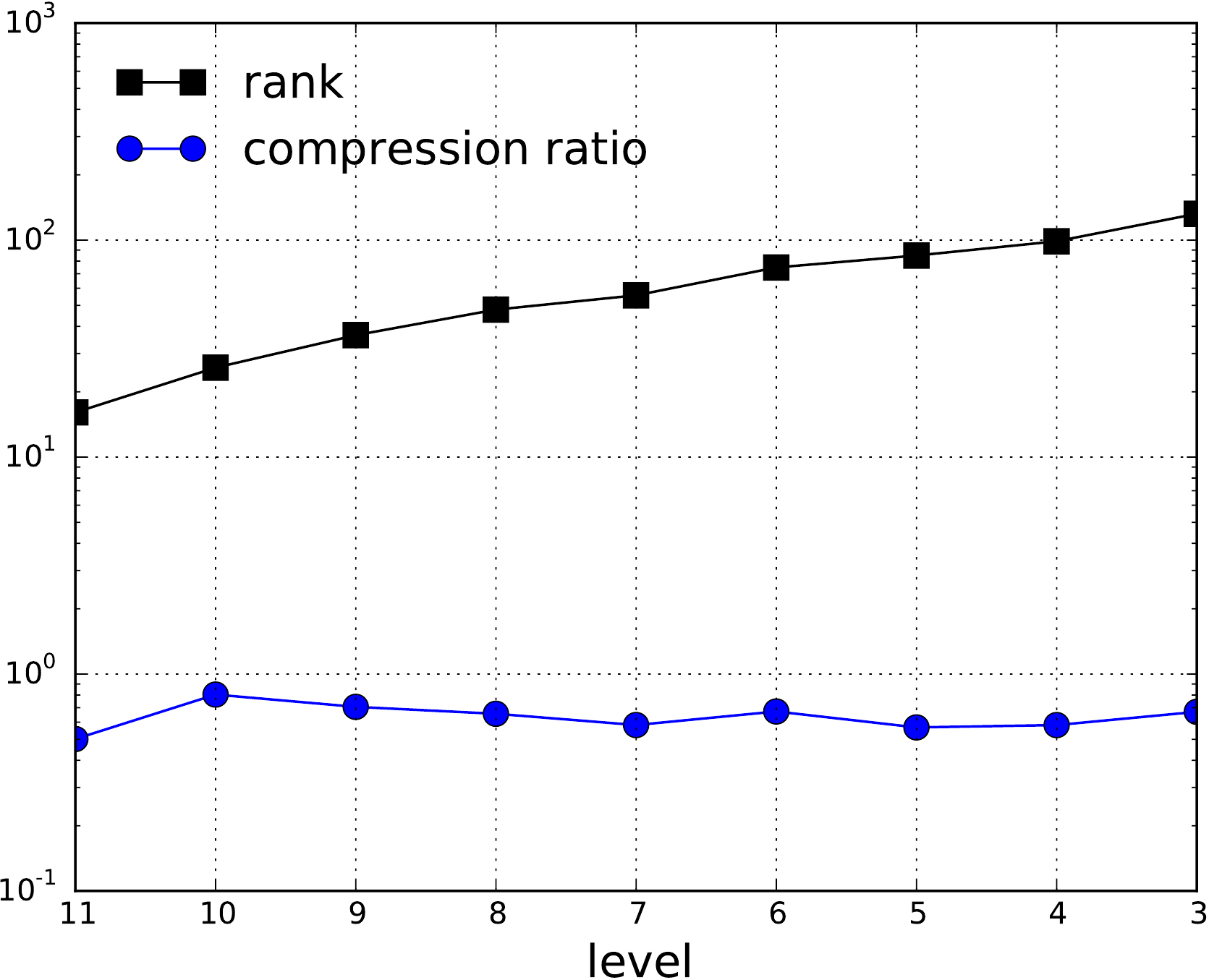}\label{fig:VCP_indef:sub1}} \quad
\subfloat[]{\includegraphics[width=0.55\textwidth]{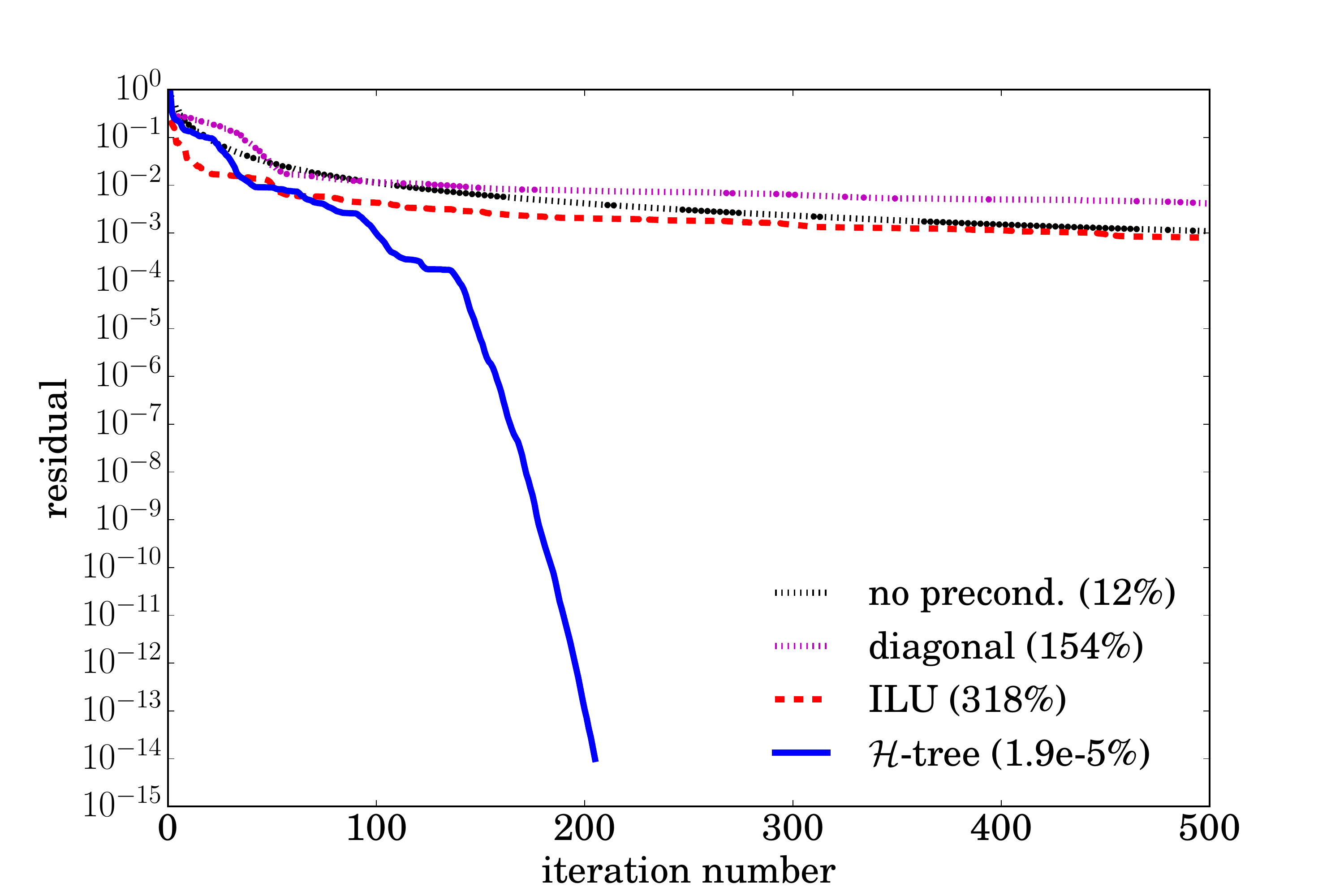}\label{fig:VCP_indef:sub2}}
\caption{\label{fig:VCP_indef} Results corresponding to the case 3 for a $32^3$ grid. (a) Averaged rank and compression ratio in the $\mathcal{H}$-tree. (b) GMRES residual as a function of the iteration number using various preconditioners. The relative error at the end of the iterations is shown in parentheses for each case. Although the residuals may be small, the relative error for some of the methods is very large.}
\end{center}
\end{figure}

\subsubsection{Elasticity equation (unstructured grid)}
Our \red{next} benchmark is obtained from an unstructured grid (see \cite{ghosh2009feti}) to solve the three-dimensional elasticity equation:
\begin{equation}
(\lambda+\mu) \nabla( \nabla . {\boldsymbol{u}} ) + \mu \nabla^2 {\boldsymbol{u}} + {\boldsymbol{F}} = 0
\end{equation}
The matrix is symmetric with size $n=334,956$, and total of $10,977,198$ non-zero entries. This problem is significantly more difficult in comparison to the previous benchmark\red{s}. We used various preconditioning strategies to evaluate the performance of our proposed solver. In \cref{tab:stiffness} a summary of the results is provided. We consider $10^{-12}$ to be the convergence criterion for the GMRES residual, and consider a maximum of 500 iterations.

When no preconditioner is used, GMRES does not converge, and we obtain a solution with 2\% relative error after 500 iterations. Employing a diagonal preconditioner (i.e., ignore all non-diagonal entries of the matrix, and approximate $A^{-1}$ with the inverse of its diagonal part) also does not lead to convergence. A 1.4\% relative error after 500 GMRES iterations is obtained, which is an improvement in comparison to the case without a preconditioner.

Next, we tried ILU as a preconditioner for GMRES. We used \red{dual}-threshold ILU with drop tolerance fixed equal to the GMRES convergence precision, and varying fill values (1, 2, etc.). ILU with fill value of 1 does not converge after 500 iterations; however, for fill values greater than 1, convergence is obtained before 500 iterations. Increasing the fill value leads to \blue{a} higher factorization time.

Finally, we used the proposed algorithm as a preconditioner. For the low-rank approximation, we used a variation of \cref{eqn:cutOff1}. Consider we want to find a low-rank approximation of a block $B$ with singular-value decomposition $B = USV^{\intercal}$. We keep the first $k$ singular-values (and therefore, singular vectors) such that, $k$ is the smallest integer that:
\begin{equation}
\frac{ \| B - U_k S_k V_k^{\intercal} \|_F } { \| \text{all levels below} \|_F } < \epsilon
\label{eqn:curOff6}
\end{equation}
The subscript $k$ in $U_k$ and $V_k$ means keeping only the first $k$ columns, and in $S_k$ means keeping the first $k$ singular-values. $\|~\|_F$ refers to the Frobenius norm. \red{Therefore,} $\| \text{all levels below} \|_F$ refers to square root of the sum of Frobenius norm squared of all blocks at the current level as well as the levels below. Both the above criteria and the one in \cref{eqn:cutOff1} work properly, and lead to the same conclusions; however, the above method is slightly more efficient. For this benchmark, we used a sequence of decreasing values for $\epsilon$ in \cref{eqn:curOff6}: $\epsilon_1 = 1024\times10^{-7}, \epsilon_2 = 256\times10^{-7},  \epsilon_3 = 64\times10^{-7},  \epsilon_4 = 16\times10^{-7},  \epsilon_5 = 4\times10^{-7}, \epsilon_6 = 1\times10^{-7}$.

\begin{table}[!htbp]
  \begin{center}
  \begin{tabular}{cccccc}
      \toprule
      \bf precond. &\bf  fact.\ time &\bf GMRES time & \bf tot.\ time& \bf \# iters & \bf rel.\ error\\\midrule
	{\bf none} & 0 & 115.9 & 115.9& 500 & 2.1e-2\\
	{\bf diagonal} & 0.02 & 116.3 & 116.3& 500 & 1.4e-2\\
	{\bf ILU 1} & 98.6 & 156.7 & 255.3& 500 & 5.3e-06\\ \midrule
	{\bf ILU 2} & 211.1 & 132.0 &343.1& 408&4.9e-10\\
	{\bf ILU 3} & 313.0 &102.2 &415.2& 309&5.7e-10\\
	{\bf ILU 4} & 399.9 &82.3 &482.2& 245&3.2e-10\\
	{\bf $\mathcal{H}$-tree $\epsilon_1$} & 90.6 &298.9&389.5& 467&2.3e-10\\
	{\bf $\mathcal{H}$-tree $\epsilon_2$} & 93.5 &299.6&393.1& 466&2.2e-10\\
	{\bf $\mathcal{H}$-tree $\epsilon_3$} & 114.6 &295.5& 410.1& 367& 2.4e-10\\
	{\bf $\mathcal{H}$-tree $\epsilon_4$} & 166.0 &130.6&296.6& 144&3.5e-10\\
	{\bf $\mathcal{H}$-tree $\epsilon_5$} & 379.8 &74.9&454.7& 46&1.3e-10\\
	{\bf $\mathcal{H}$-tree $\epsilon_6$} & 961.1 &43.8&1004.9& 16& 2.1e-10\\ \bottomrule
  \end{tabular}
  \caption{GMRES performance using various preconditioners for a matrix of size 330k with more than 10 million non-zeros obtained from a 3D unstructured discretization of the elasticity equation. Times are reported in seconds.}
  \label{tab:stiffness}
  \end{center}
\end{table}

\Cref{fig:residual_stiffness} illustrates the variation of residual versus the number of iterations using various preconditioners. Clearly, diagonal preconditioner accelerates convergence compared to the case with no preconditioner. ILU preconditioners bring about convergence faster than diagonal. The $\mathcal{H}$-tree based preconditioners lead to a significant acceleration in convergence. Decreasing $\epsilon$ (and similarly increasing fill value in the ILU) results in a shorter iteration time at the cost of a more expensive factorization as listed in \cref{tab:stiffness}. In practice, one should pick an intermediate value for $\epsilon$ (and similarly for the fill value when ILU is used) to get an optimal total runtime (i.e., factorization + GMRES iterations).

\begin{figure}[htbp] \centering
\includegraphics[width=0.8\textwidth]{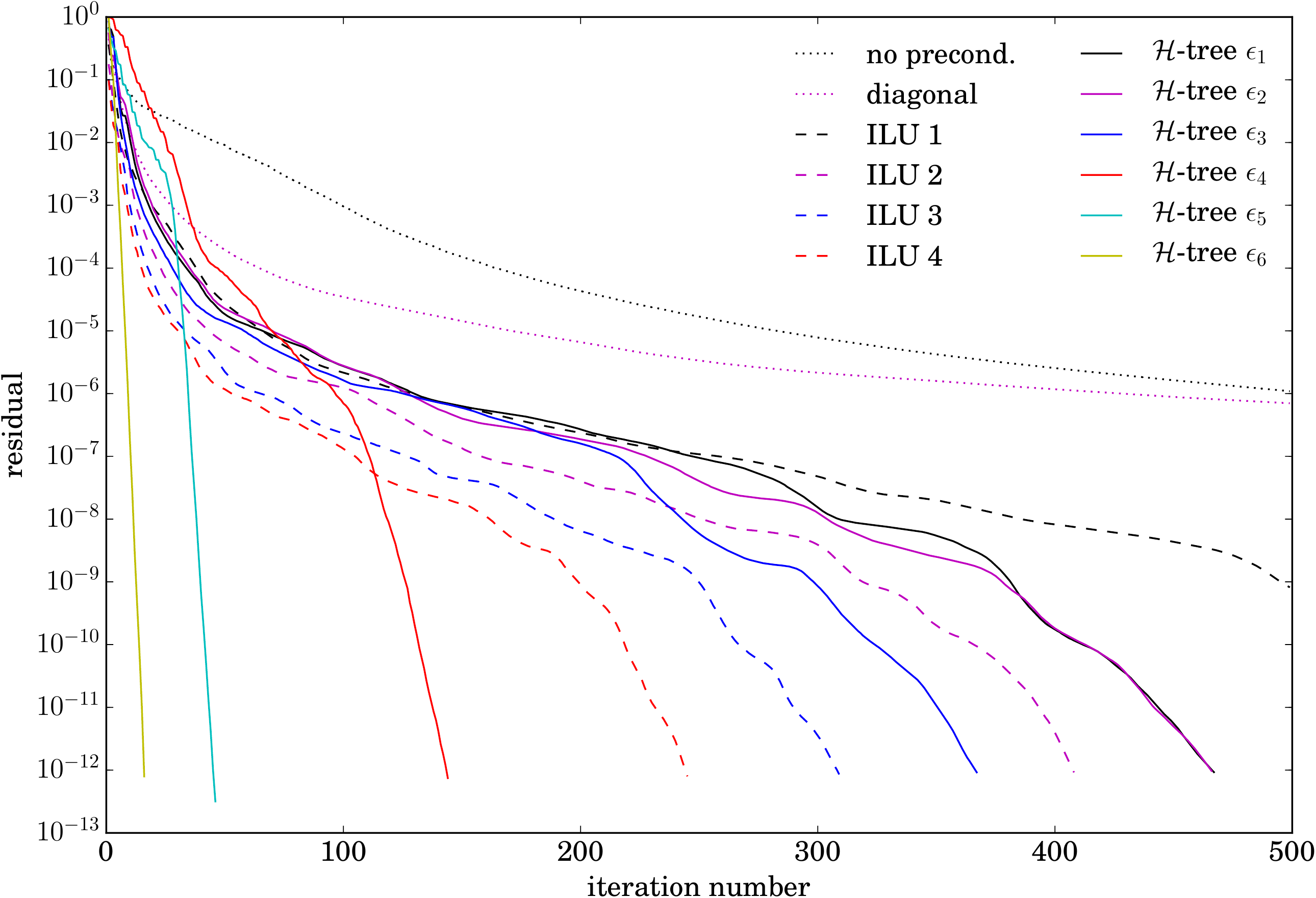}
\caption{GMRES residual as a function of iteration number using various preconditioners for a matrix of size 330k with more than 10 million non-zeros obtained from a 3D unstructured discretization of the elasticity equation.}
\label{fig:residual_stiffness}
\end{figure}

In \cref{fig:optimization_stiffness} the breakdown of the total time is plotted for the cases that convergence is achieved. The non-monotonic functionality of the total time as a function of $\epsilon$ is clear form this figure. For instance, for this set of $\epsilon$ values, $\epsilon_4 = 16\times10^{-7}$ has the optimal time, which is more efficient in comparison to the best time of the ILU. Note that the current implementation of the algorithm is completely sequential. There are various optimizations to enhance the performance of the solver. We discuss some of the possible optimizations in \cref{sec:conclusion}.
Typically, $\epsilon$ meets its optimal value when  factorization and \blue{iteration} times are almost equal. This is also evident in \cref{fig:optimization_stiffness}.

\begin{figure}[htbp] \centering
\includegraphics[width=0.6\textwidth]{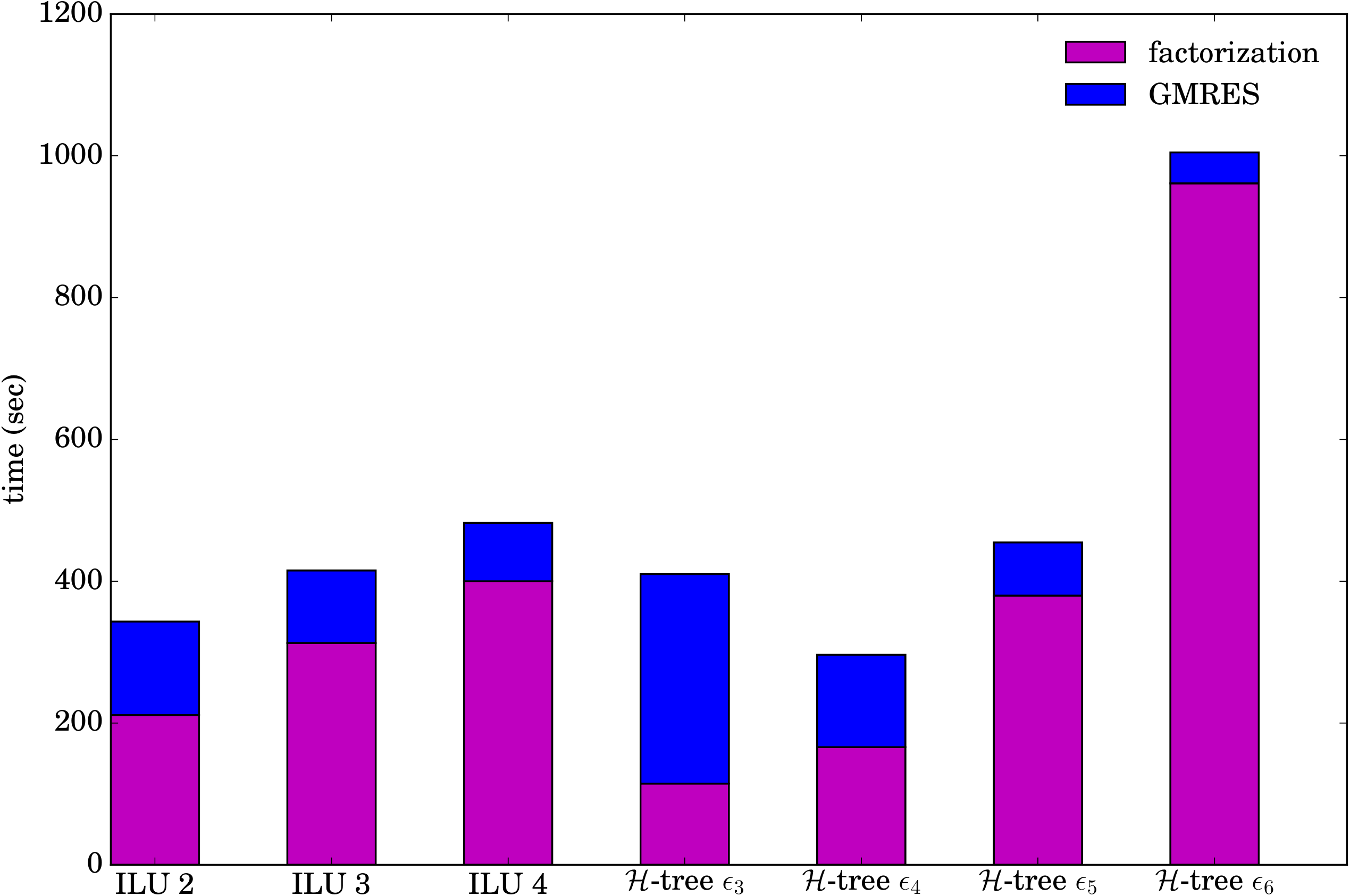}
\caption{Total solve time using various preconditioners for a matrix of size 330k with more than 10 million non-zeros obtained from a 3D unstructured discretization of the elasticity equation.}
\label{fig:optimization_stiffness}
\end{figure}

\subsubsection{Advection-diffusion problem (non-symmetric linear system)}
\red{In our last benchmark, we consider the advection-diffusion problem in a cubic domain of size $L^3$ with Dirichlet boundary conditions. This problem is governed by the following PDE.
\begin{equation}
\frac{\partial T}{\partial t} + \boldsymbol{U} \cdot \nabla T = \nu \nabla^2 T + s
\end{equation}
In the above equation $\boldsymbol{U}=(U_x,U_y,U_z)$ is a constant velocity vector, $\nu$ is the diffusion coefficient, and $s$ is a source term. Using an implicit numerical time integration method, and assuming $U_x = U_y = U_z = U$, the above equation transforms to the following non-dimensional form
\begin{equation} \label{eqn:advdiff}
\sigma T + \mathcal{R} \nabla T - \nabla^2 T = g,
\end{equation}
where $\sigma = \frac{L^2}{\nu \Delta t}$ and $\mathcal{R} = \frac{L U}{\nu}$ assuming $\Delta t$ is the time step.}

\red{Discretizing \cref{eqn:advdiff} using a central finite differencing method results in symmetric and skew-symmetric matrices for the diffusion and advection terms, respectively. Therefore, the full system of equations is represented by a non-symmetric matrix.}

\red{We verified the accuracy of the $\mathcal{H}$-tree solver (similar to \cref{fig:convergence}) for a case with $\sigma = 0$ and $\mathcal{R} = 1$ on a $32^3$ grid. \Cref{fig:nonSym:sub1} shows the error and residual as a function of the low-rank precision parameter $\epsilon$ as defined in \cref{eqn:cutOff1}. Similar to the symmetric case, residual and error are proportional to $\epsilon$.}

\red{Furthermore, we compare the convergence of the GMRES solver for the advection-diffusion problem when $\mathcal{H}$-tree and ILU are used as preconditioner. 
We consider a sequence of problems with $\sigma=1$ and varying $\mathcal{R}$ on a $32^3$ grid. In \cref{fig:nonSym:sub2} the numbers of GMRES iterations required to converge to a solution with residual less than $10^{-10}$ are shown for different cases. For the $\mathcal{H}$-tree preconditioner a low-rank precision of $\epsilon = 10^{-1}$ is used, while for the ILU drop tolerance is equal to the GMRES convergence precision and fill parameter is set to 3 (the minimum fill parameter such that all cases converge with less than 500 iterations). A 1-norm approximation of the condition number of the system is also illustrated in \cref{fig:nonSym:sub2}. For the problem studied here, the condition number (and therefore, the number of GMRES iterations) is a non-monotonic function of $\mathcal{R}$. $\mathcal{H}$-tree preconditioner exhibits a stable number of iterations and accuracy for all cases. The ILU preconditioner, however, gives rise to a larger number of iterations and higher variation with $\mathcal{R}$. Furthermore, for the case with $\mathcal{R} = 1024$ the ILU preconditioner results in a solution with final residual (defined in \cref{eqn:accres}) of order $10^{-2}$, while the preconditioned residual (defined in \cref{eqn:GMRES_residual}) is less than $10^{-10}$. Such discrepancy means the preconditioner (in this case ILU) is ill-conditioned.}

\begin{figure}[htbp]
\begin{center}
\subfloat[]{\includegraphics[width=0.48\textwidth]{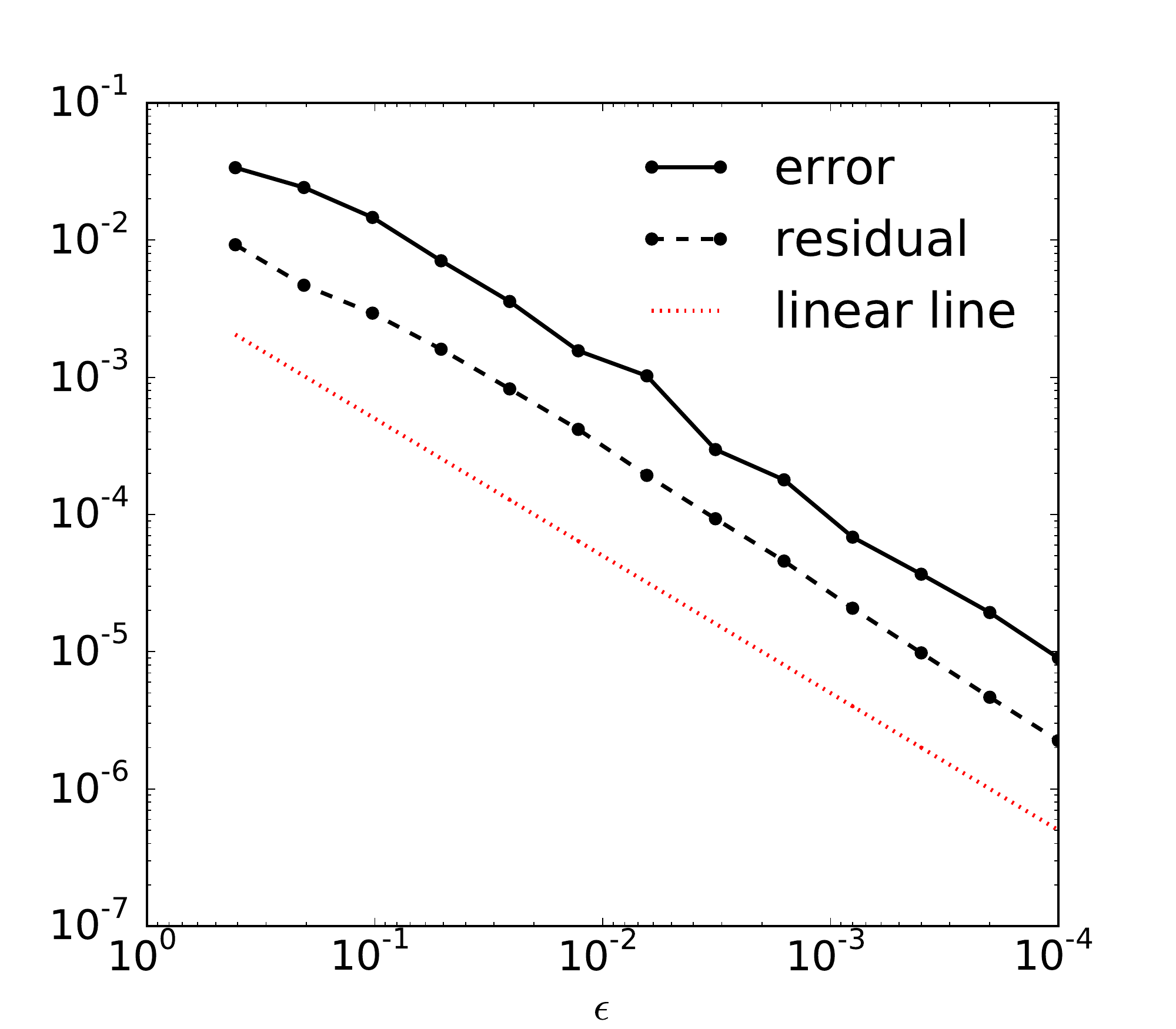}\label{fig:nonSym:sub1}} \quad
\subfloat[]{\includegraphics[width=0.47\textwidth]{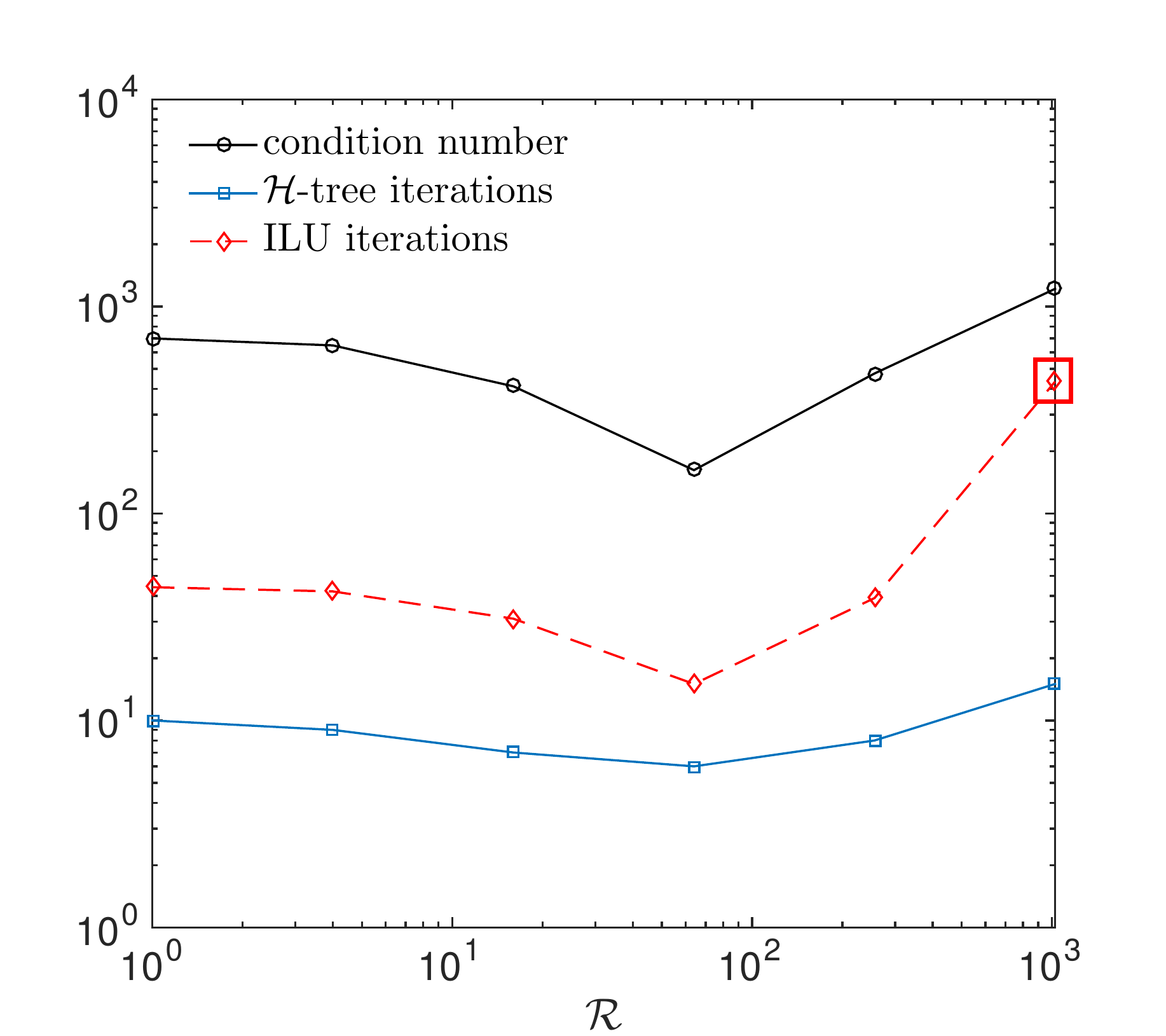}\label{fig:nonSym:sub2}}
\caption{\label{fig:nonSym}\red{(a) Error and residual of the solution of \cref{eqn:advdiff} as a function of low-rank precision $\epsilon$ using $\mathcal{H}$-tree solver. (b) Number of GMRES iterations using ILU and $\mathcal{H}$-tree as preconditioner. For $\mathcal{R}=1024$, the ILU preconditioner results in a final residual $10^{-2}$, while the pre-conditioned residual is less than $10^{-10}$. This case is highlighted by a red square in the figure. The 1-norm approximation of the matrix condition number is also plotted.}}
\end{center}
\end{figure}

\section {Conclusion and future works} \label{sec:conclusion} 
We proposed a new algorithm to solve sparse linear systems \red{with numerically low-rank structures} in linear time. The algorithm is based on the LU factorization of the sparse matrix, where the \red{matrices} $L$ and $U$ are computed and stored using a hierarchical low-rank structure. The accuracy of the factorization is determined a priori. For a precision tolerance $\epsilon$, the complexities of the factorization cost and memory are $\mathcal{O} \left(n\log^2 {1}/{\epsilon} \right)$ and $\mathcal{O} \left( n \log {1}/{\epsilon}\right)$, respectively.

The proposed algorithm is fully algebraic and preserves the sparsity of the original matrix \red{during the elimination}. Therefore, it can be considered as an extension to the ILU method. In the ILU factorization, new fill-ins are ignored. In the proposed algorithm, however, new fill-ins are compressed using low-rank approximations. Compressed fill-ins form a new set of equations ---at a coarser level--- which \red{are} factorized through elimination. Furthermore, the multilevel process of the factorization is similar to AMG, where the original system is solved at different levels (grid size).

We provided various benchmarks to illustrate the performance of the proposed algorithm. We used matrices obtained from the discretization of the Poisson and elasticity equations on structured and unstructured grids, respectively. \red{A non-symmetric benchmark corresponding to the advection-diffusion problem is also presented.} The proposed factorization method is used both as a stand-alone solver with tunable accuracy, and as a preconditioner in conjunction with \red{an iterative method (e.g., GMRES)}.

There are various aspects of the method which are general, and can be modified to optimize the solver for \red{particular} matrices without loosing the properties demonstrated in this paper. Here is \red{a} list of \red{some} aspects that can be modified in the algorithm:
\begin{enumerate}
\item Partitioning of the sparse matrix graph is generic. Higher quality partitioning in general results in higher accuracy solution. If the matrix is associated to a physical grid, the physical coordinates of the solution points can be used to improve the quality of partitioning.
\item Here, we used a binary tree corresponding to the recursive bi-partitioning of the graph. Many other options are possible, e.g., using octree if the matrix comes from a three-dimensional problem, \red{or an adaptive tree with arbitrary number of children per node.}
\item We used SVD for low-rank representation of the well-separated \red{nodes}. Other low-rank \red{approximation} methods could be used as well, e.g., randomized SVD \cite{halko2011finding}, randomized block algorithm \cite{voronin2015randomized}, adaptive cross approximation \cite{bebendorf2003adaptive}, rank-revealing QR/LU \cite{chan1987rank}, etc. 
\item Having a low-rank representation of a well-separated block, there are different measures to define the error. For instance, we used the ratio of the singular values to the largest singular value as a measure of accuracy in the low-rank approximation in \cref{eqn:cutOff1}. One can use different criteria, e.g., absolute singular values, ratio of singular values to largest singular value of the whole level or the full matrix, Frobenius norm of the low-rank block, etc.
\item We defined two super-nodes as well-separated \blue{if their distance is greater than 1 (see \cref{cor:dist})}. One can change this definition, and make it \red{stronger}. For example, \red{define} two super-nodes \red{as} well-separated if their distance is at least \red{3}. This is similar to the fill value parameter in the ILU.
\item At each level, we can use any ordering to eliminate super-nodes and black-nodes. Here we used a \blue{generic} ordering. There are various other orderings which reduce the calculation cost, including the minimum degree, minimum deficiency, nested dissection, etc. \cite{davis2006direct}. The complexity of the algorithm remains linear irrespective of the ordering. This is particularly an alluring property for a parallel implementation of the algorithm.
\end{enumerate}
\clearpage
\appendix 
\appendixnotitle
\label{sec:appA}
\red{In this section, we present an example of the factorization process (see \cref{alg:fact}) for one level. Each figure represents one step of the algorithm. The $\mathcal{H}$-tree is shown on the right, and the corresponding extended matrix is shown on the left.}
\begin{figure}[tbhp] \centering
\includegraphics[width=0.32\textwidth]{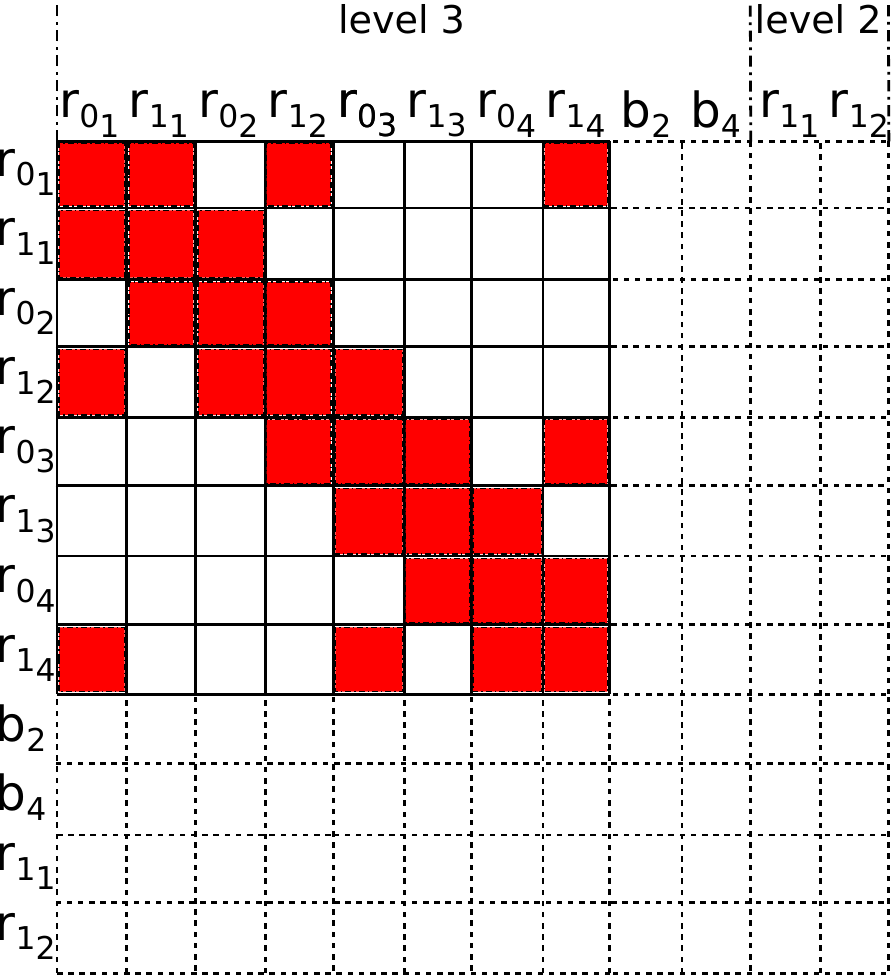}
\hspace{3mm}
\includegraphics[width=0.63\textwidth]{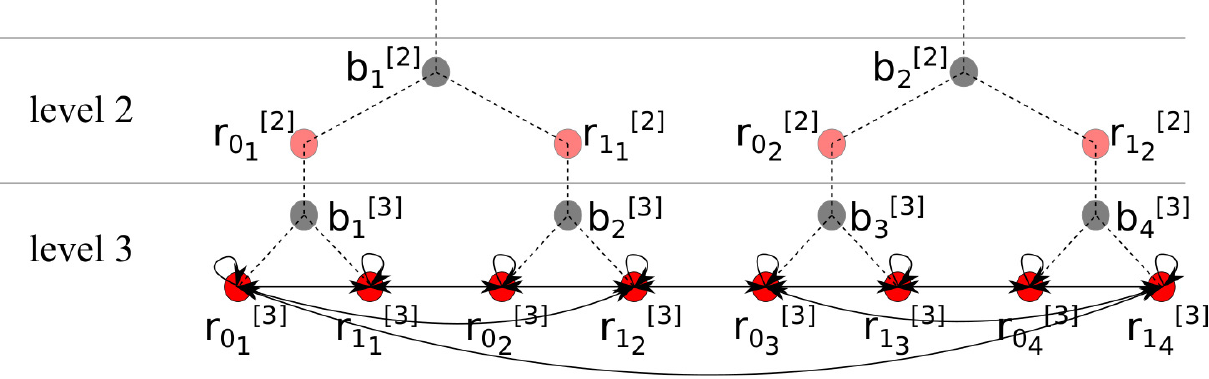}
\caption{Original matrix (left) and the corresponding adjacency graph (right).}
\label{fig:pictorial_alg0}
\end{figure}

\begin{figure}[tbhp] \centering
\includegraphics[width=0.32\textwidth]{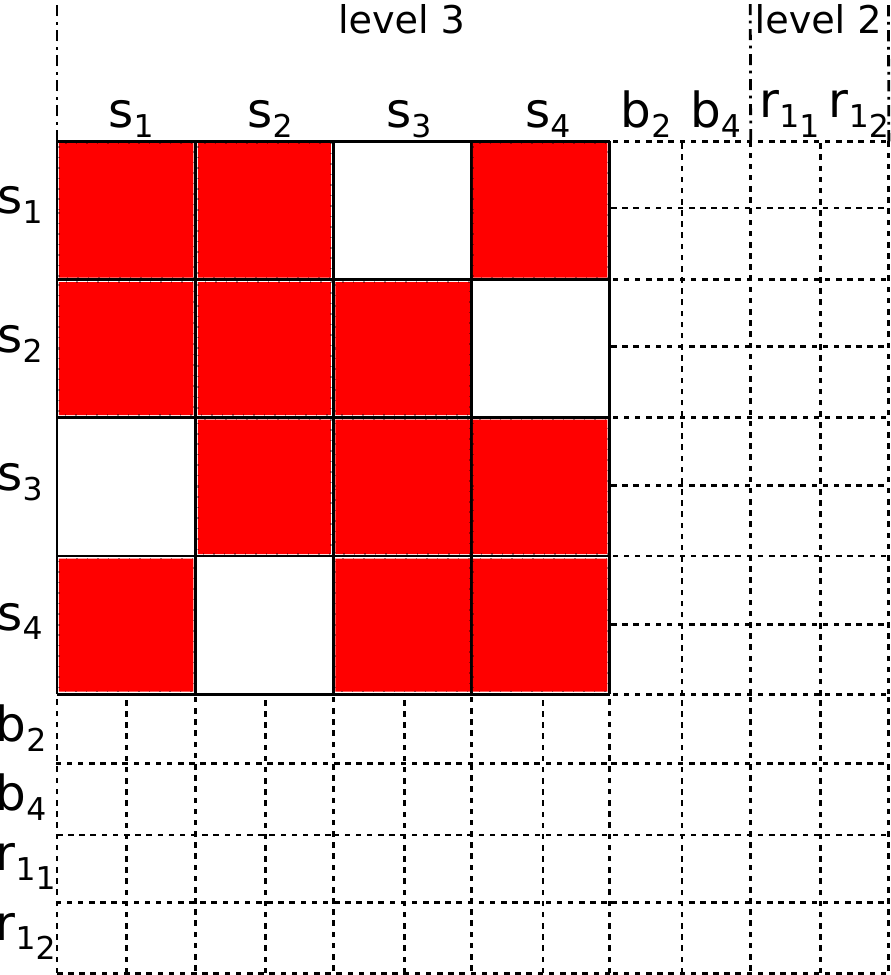}
\hspace{3mm}
\includegraphics[width=0.63\textwidth]{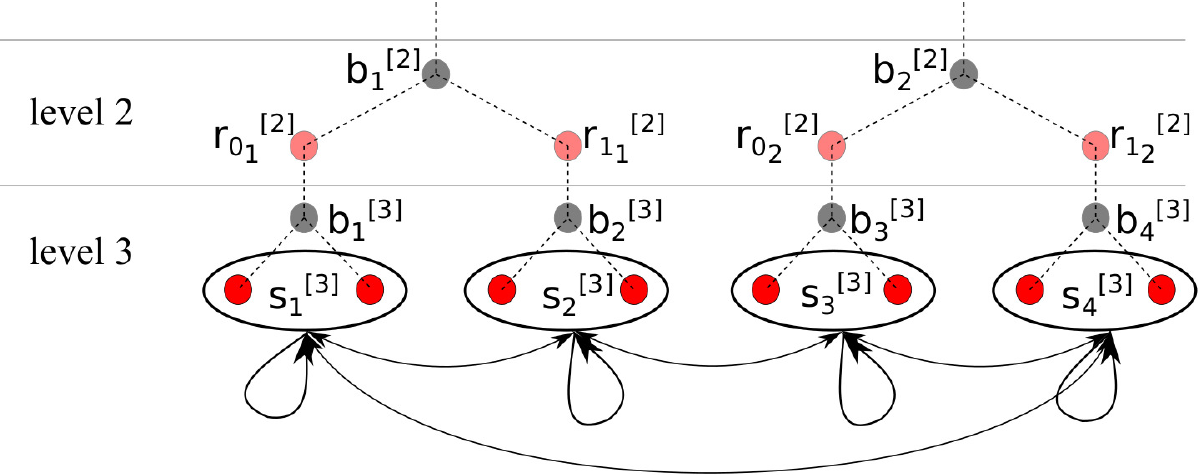}
\caption{Creating super-nodes at level 3 (see \cref{sec:merge}), where the super-node $s_i^{[3]}$ consists of red-nodes ${r_0}_i^{[3]}$ and ${r_1}_i^{[3]}$.}
\label{fig:pictorial_alg1}
\end{figure}

\begin{figure}[tbhp] \centering
\includegraphics[width=0.32\textwidth]{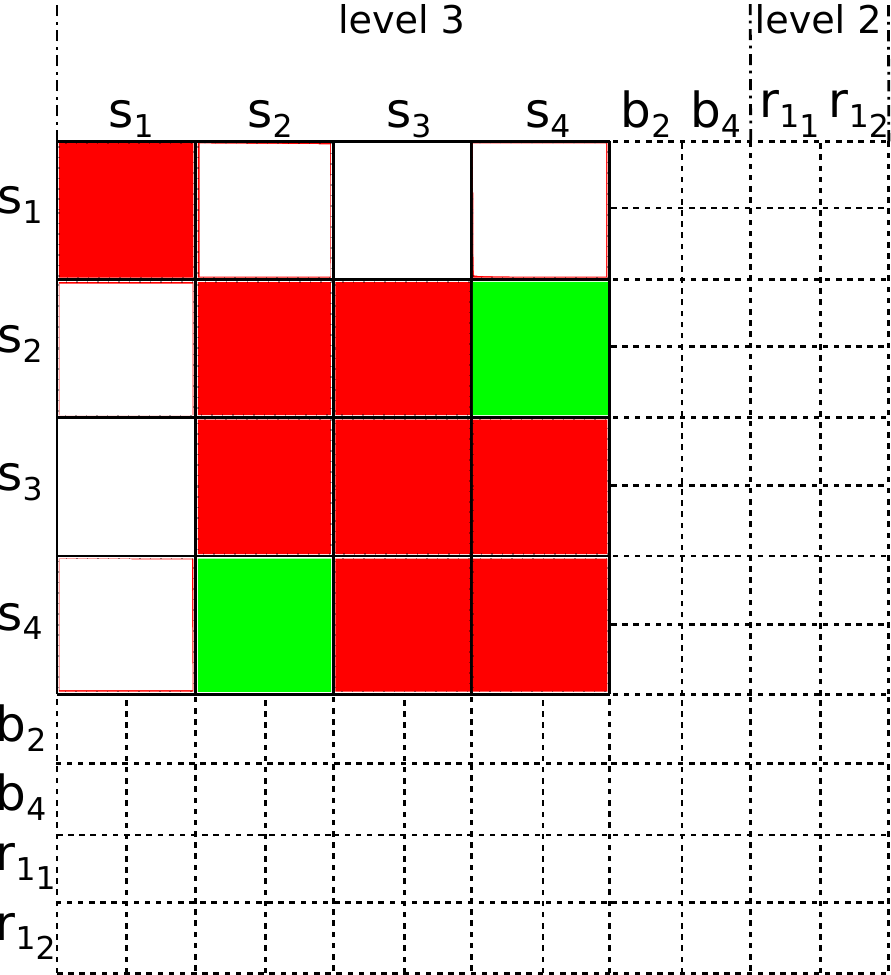}
\hspace{3mm}
\includegraphics[width=0.63\textwidth]{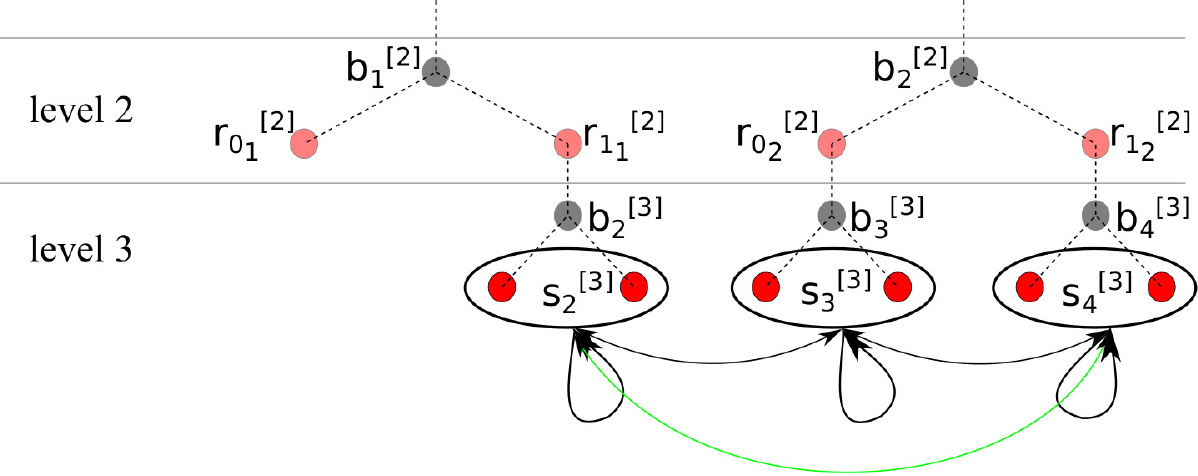}
\caption{Eliminating $s_1^{[3]}$ (see \cref{sec:elimAlg}). Green edges (and their corresponding blocks in the matrix) represent a numerically low-rank interaction between two well-separated nodes to be compressed.}
\label{fig:pictorial_alg2}
\end{figure}

\begin{figure}[tbhp] \centering
\includegraphics[width=0.32\textwidth]{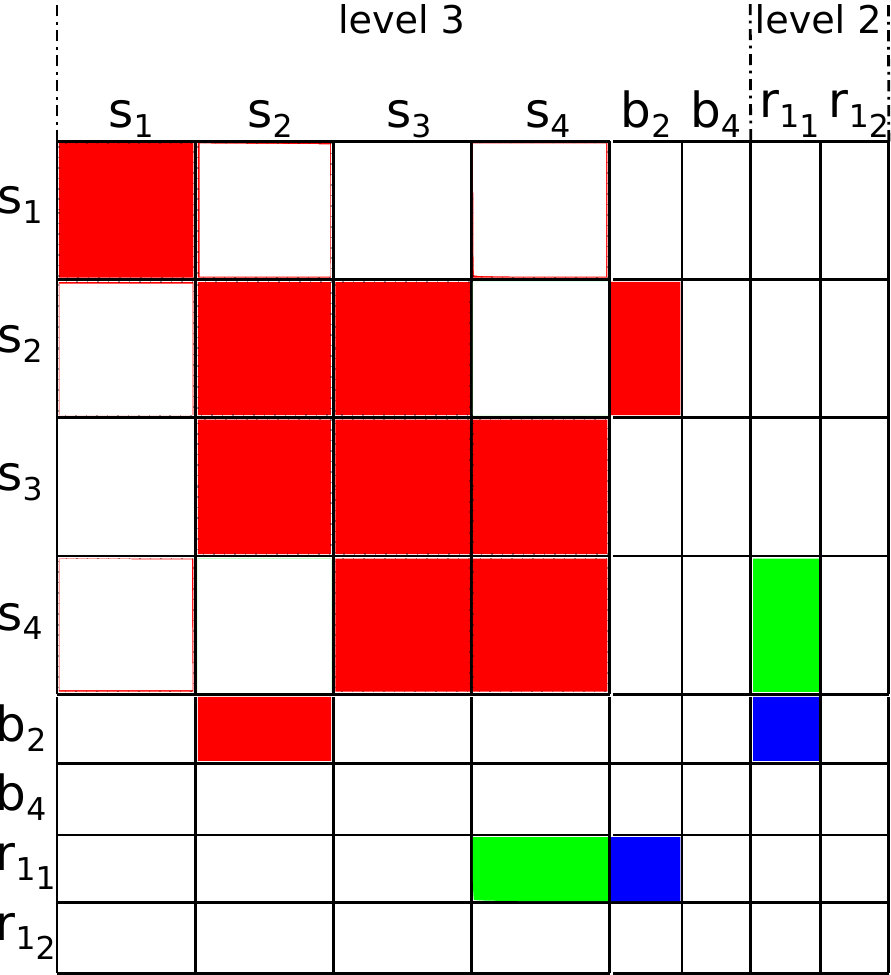}
\hspace{3mm}
\includegraphics[width=0.63\textwidth]{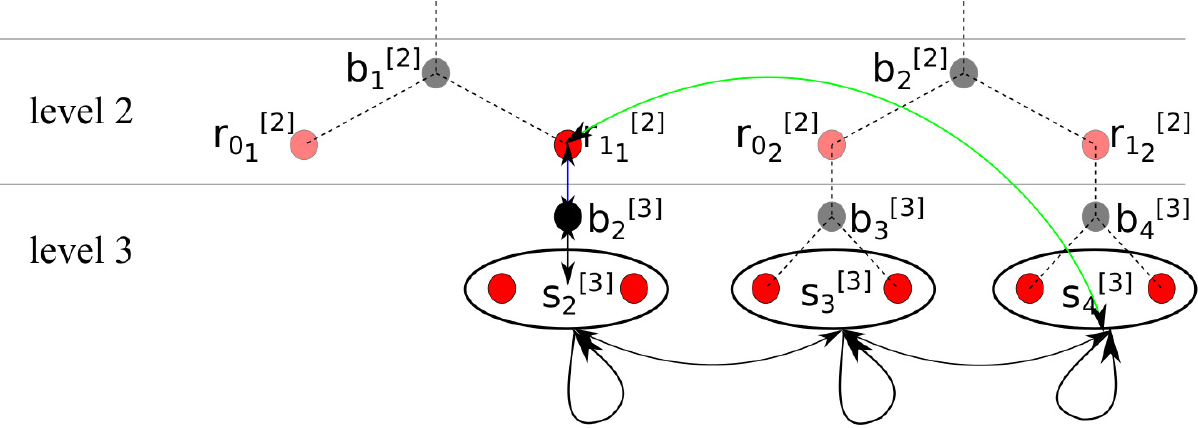}
\caption{Compressing the well-separated edge between $s_2^{[3]}$ and $s_4^{[3]}$ (see \cref{sec:comp}). Blue edges (and their corresponding blocks in the matrix) correspond to minus identity.}
\label{fig:pictorial_alg3}
\end{figure}

\begin{figure}[tbhp] \centering
\includegraphics[width=0.32\textwidth]{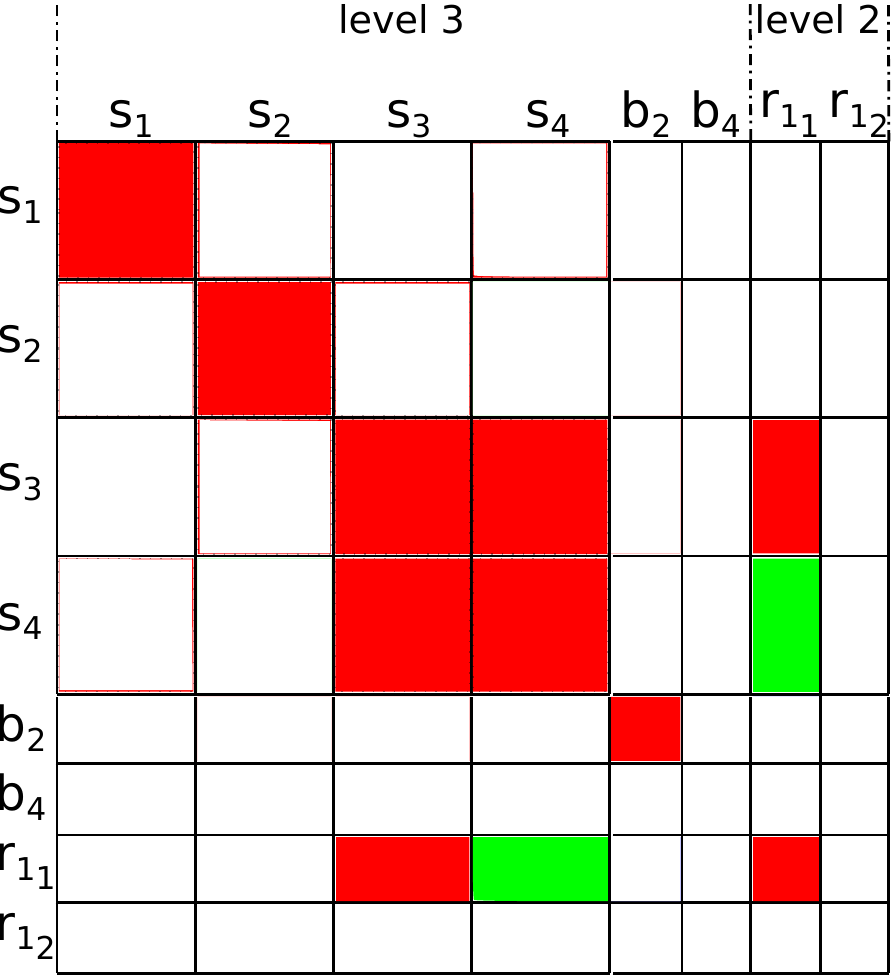}
\hspace{3mm}
\includegraphics[width=0.63\textwidth]{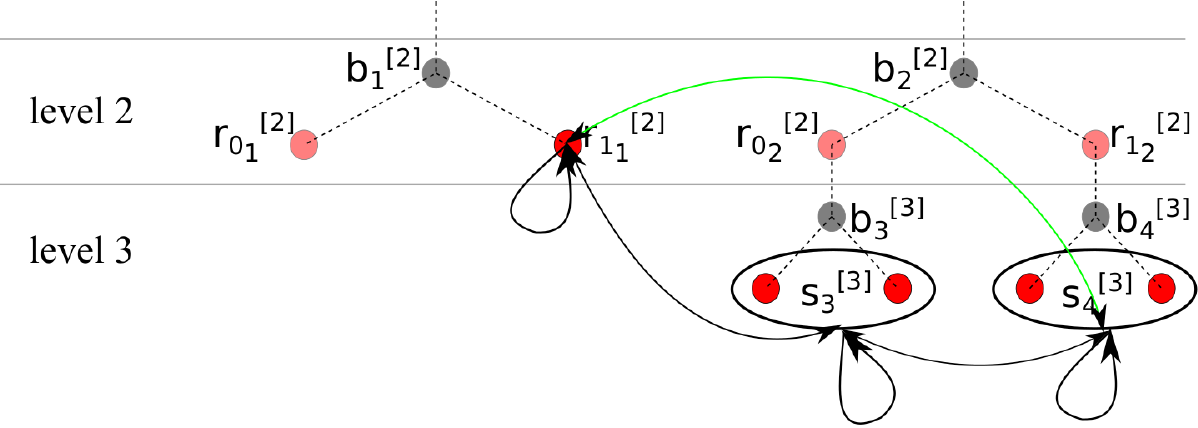}
\caption{Eliminating $s_2^{[3]}$ and $b_2^{[3]}$ (see \cref{sec:elimAlg}).}
\label{fig:pictorial_alg4}
\end{figure}

\begin{figure}[tbhp] \centering
\includegraphics[width=0.32\textwidth]{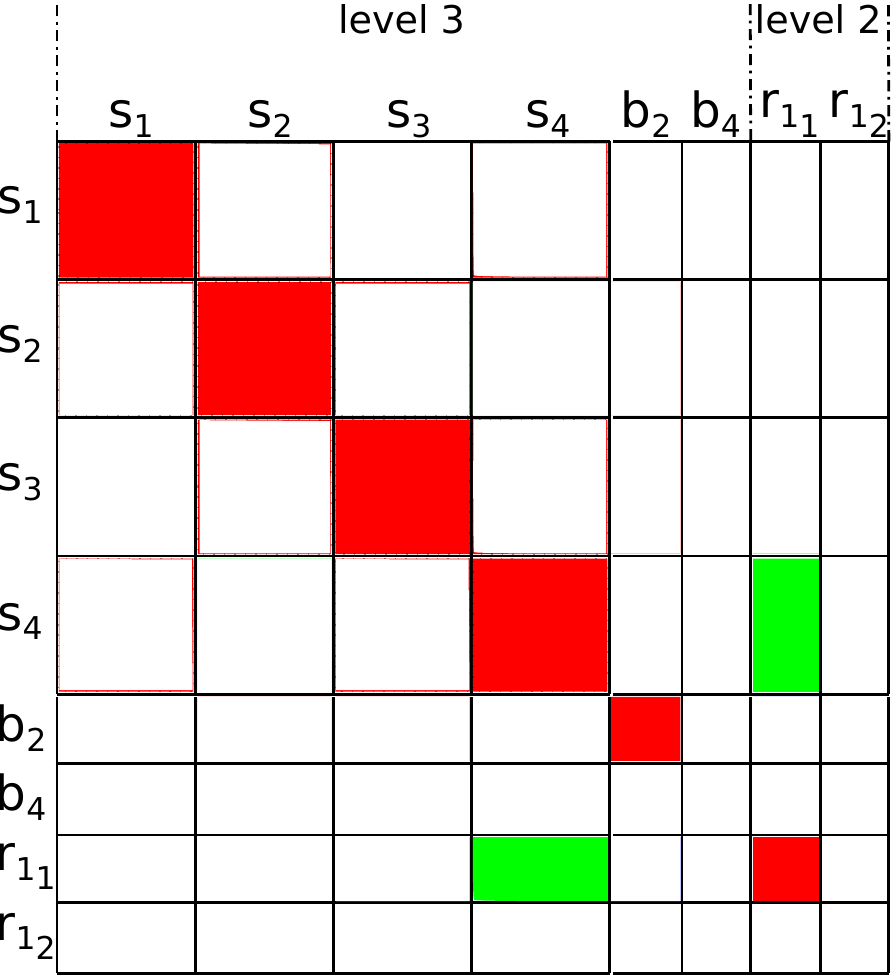}
\hspace{3mm}
\includegraphics[width=0.63\textwidth]{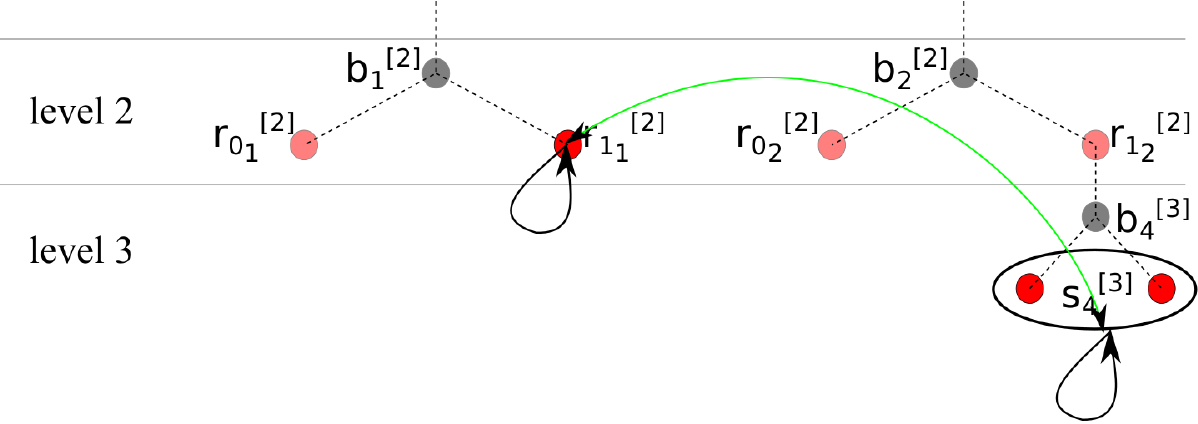}
\caption{Eliminating $s_3^{[3]}$ (see \cref{sec:elimAlg}).}
\label{fig:pictorial_alg5}
\end{figure}

\begin{figure}[tbhp] \centering
\includegraphics[width=0.32\textwidth]{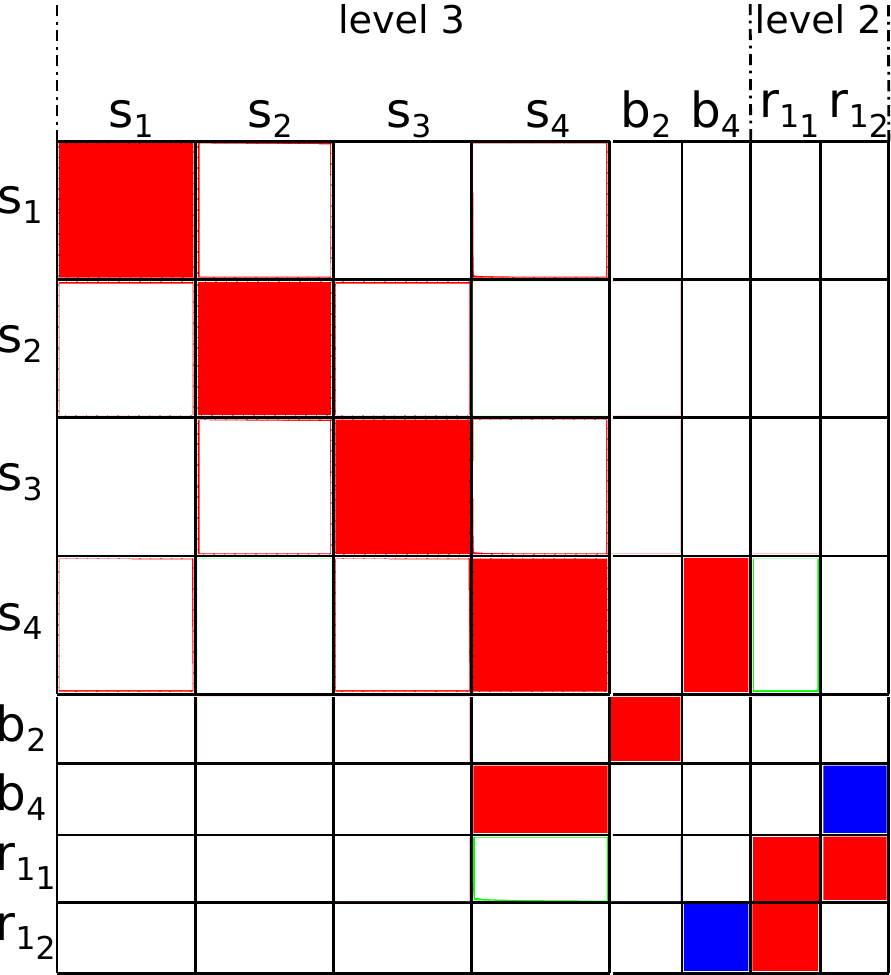}
\hspace{3mm}
\includegraphics[width=0.63\textwidth]{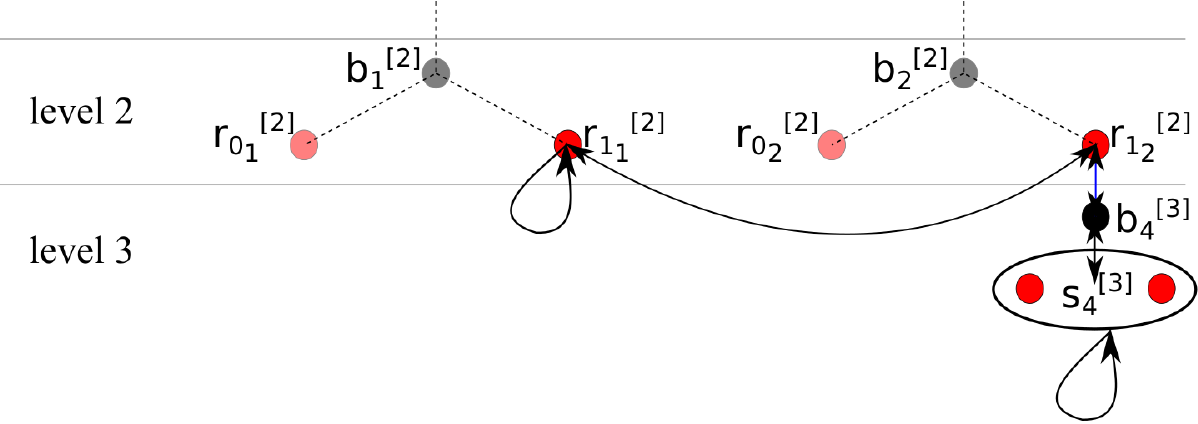}
\caption{Compressing the well-separated edge between ${r_1}_1^{[2]}$ and $s_4^{[3]}$ (see \cref{sec:comp}).}
\label{fig:pictorial_alg6}
\end{figure}

\begin{figure}[tbhp] \centering
\includegraphics[width=0.32\textwidth]{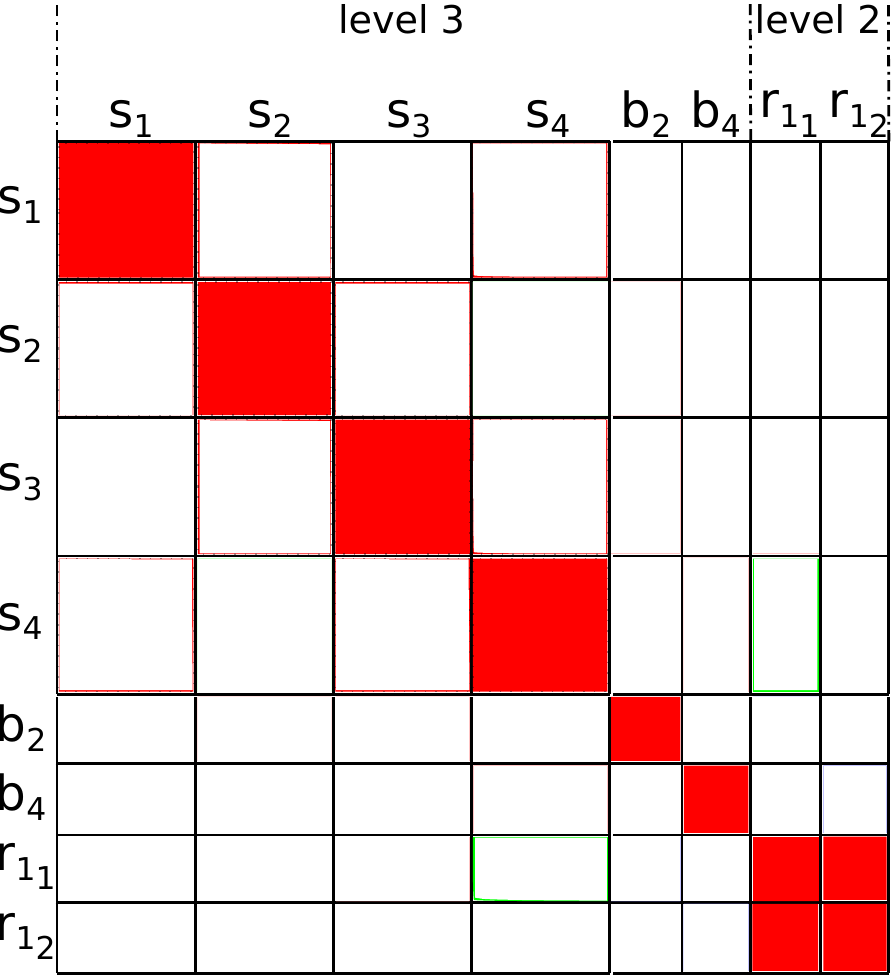}
\hspace{3mm}
\includegraphics[width=0.63\textwidth]{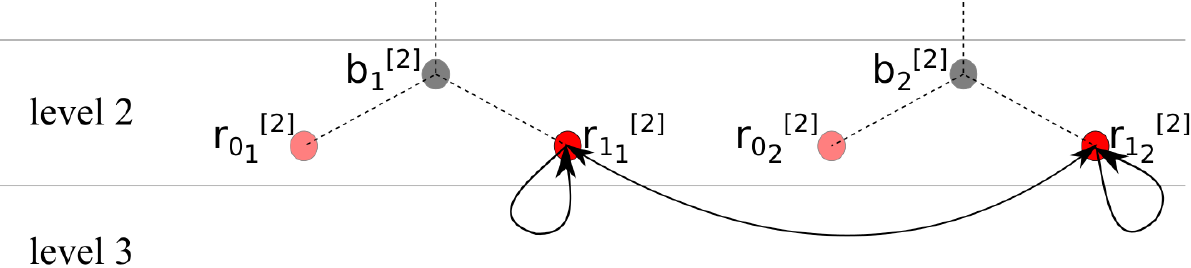}
\caption{Eliminating $s_4^{[3]}$ and $b_4^{[3]}$ (see \cref{sec:elimAlg}).}
\label{fig:pictorial_alg7}
\end{figure}
\clearpage

\appendixnotitle
\label{sec:appB}
\red{In this appendix we provide a graphical example of a nested partitioning using the SCOTCH library for a sparse matrix corresponding to discretization of \cref{eqn:Pois} on a 2D Voronoi grid.}
\begin{figure}[tbhp] \centering
\includegraphics[width=0.4\textwidth]{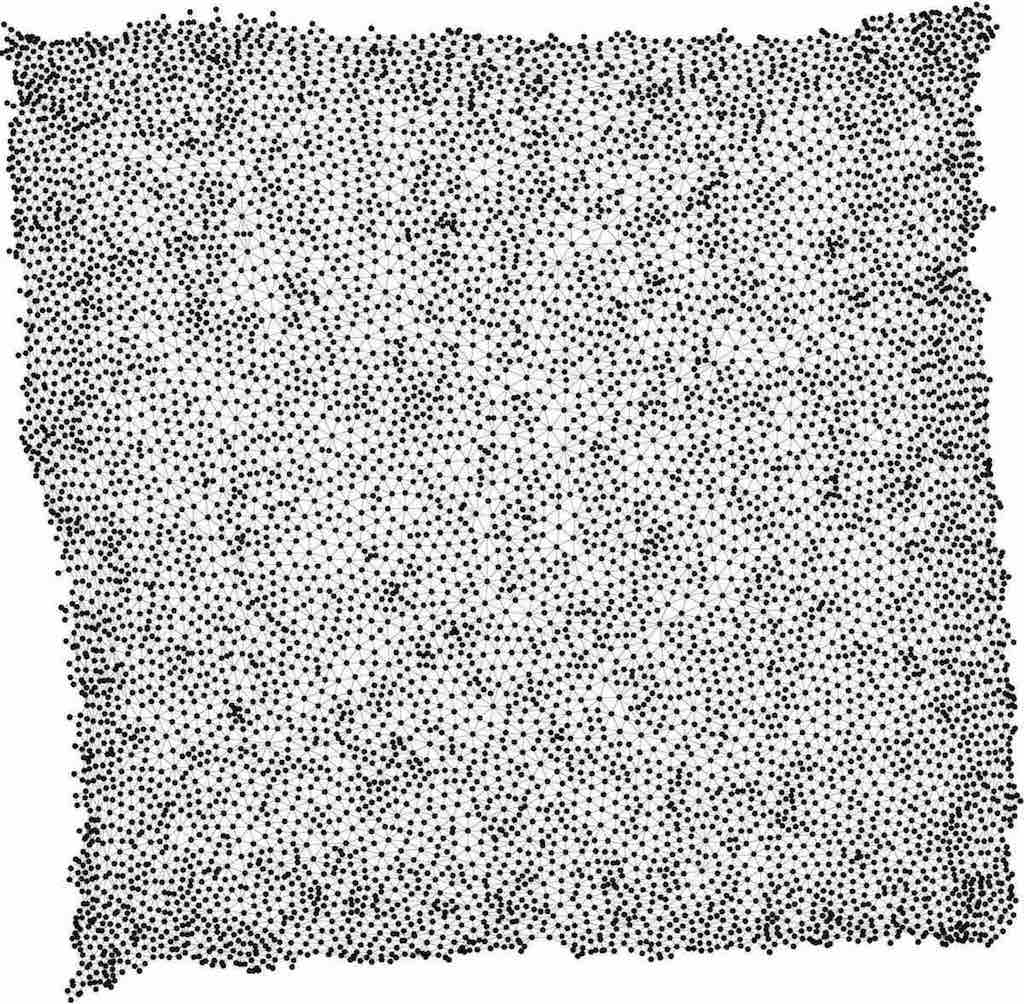}
\hspace{5mm}
\includegraphics[width=0.4\textwidth]{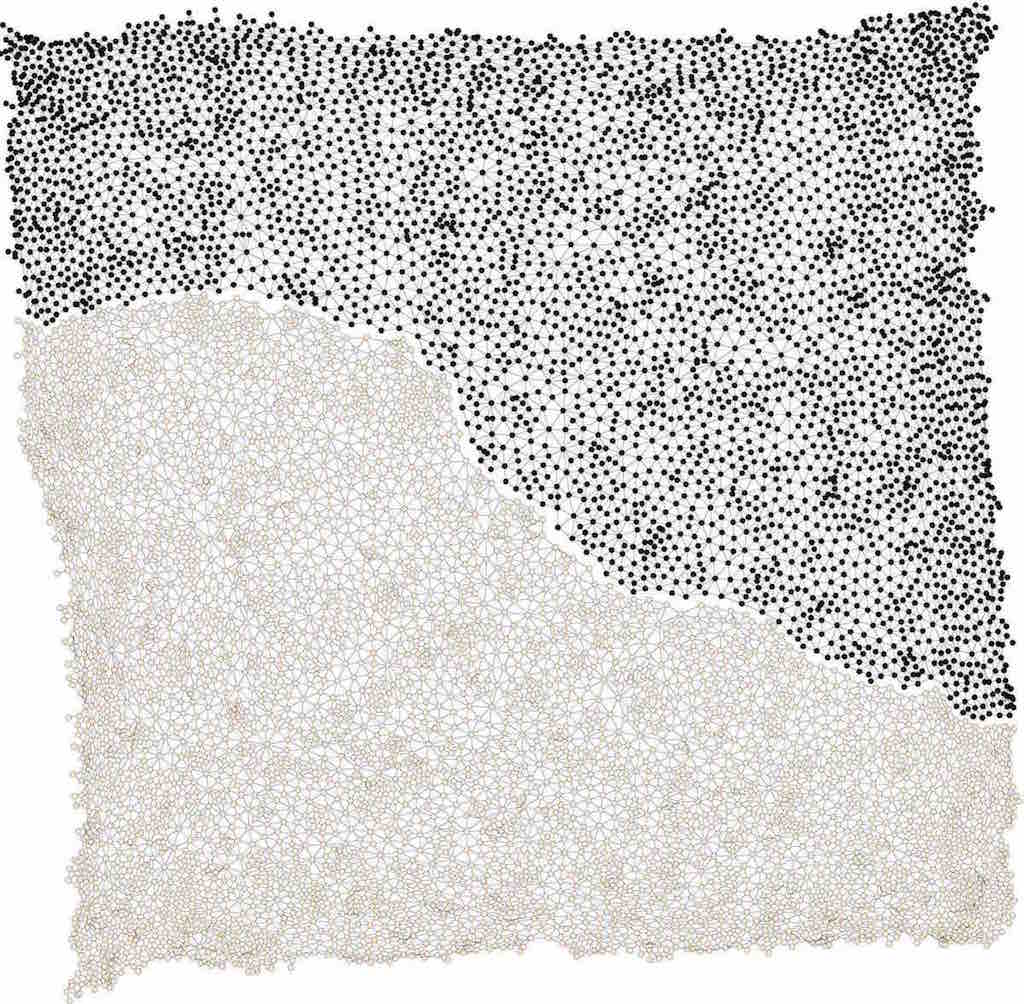}

\includegraphics[width=0.4\textwidth]{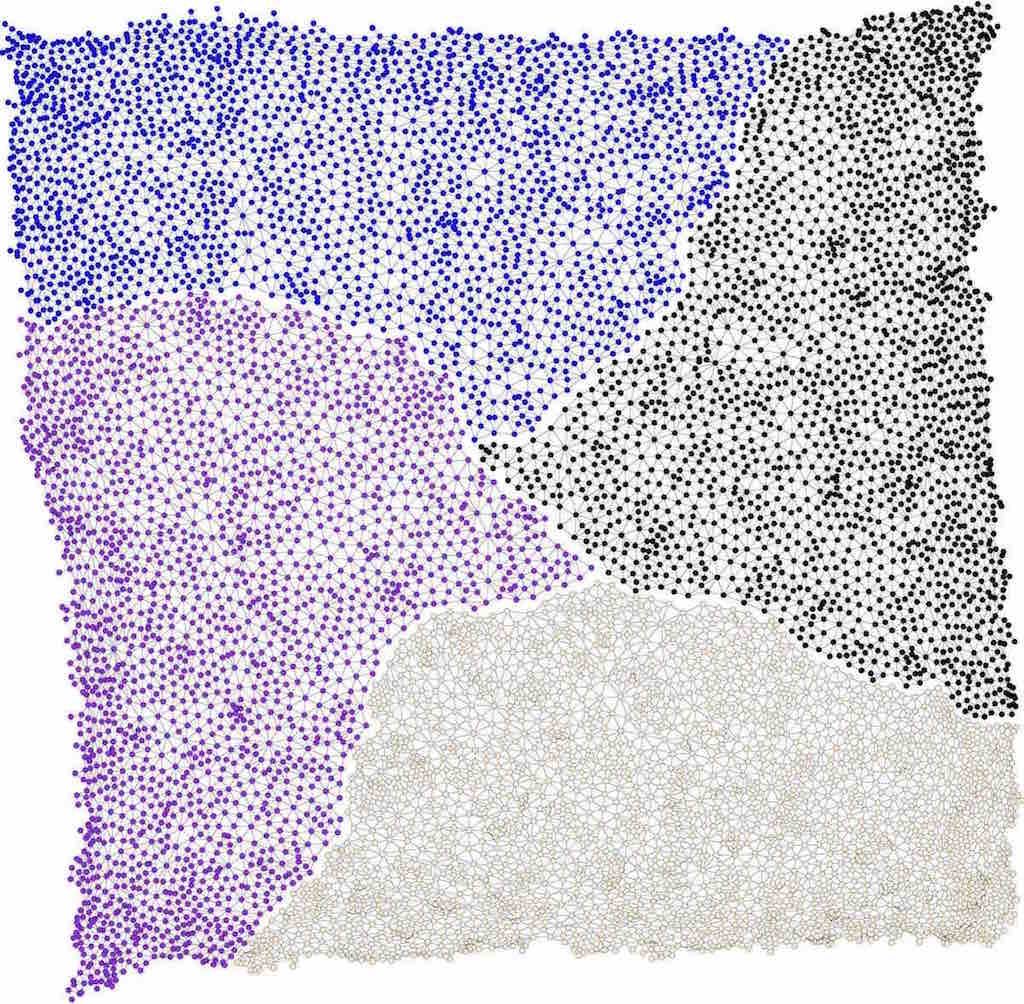}
\hspace{5mm}
\includegraphics[width=0.4\textwidth]{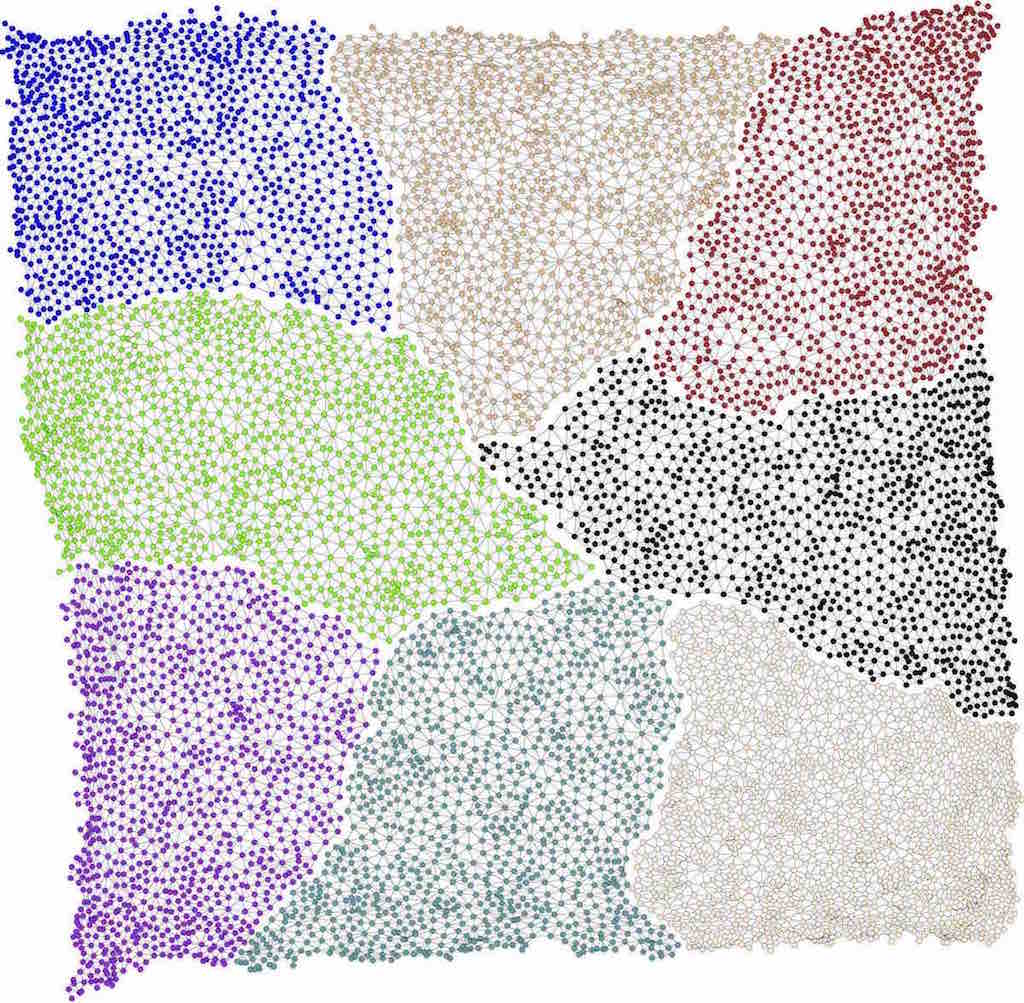}

\includegraphics[width=0.4\textwidth]{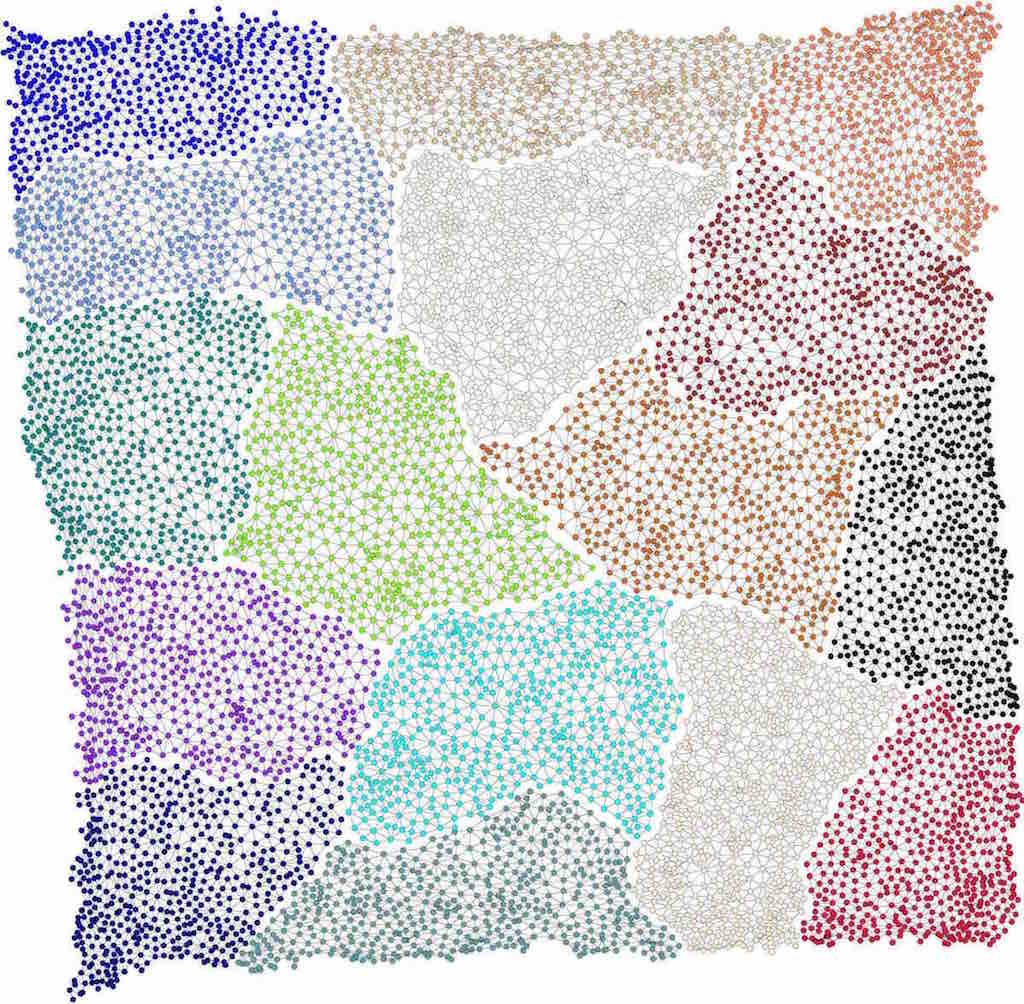}
\hspace{5mm}
\includegraphics[width=0.4\textwidth]{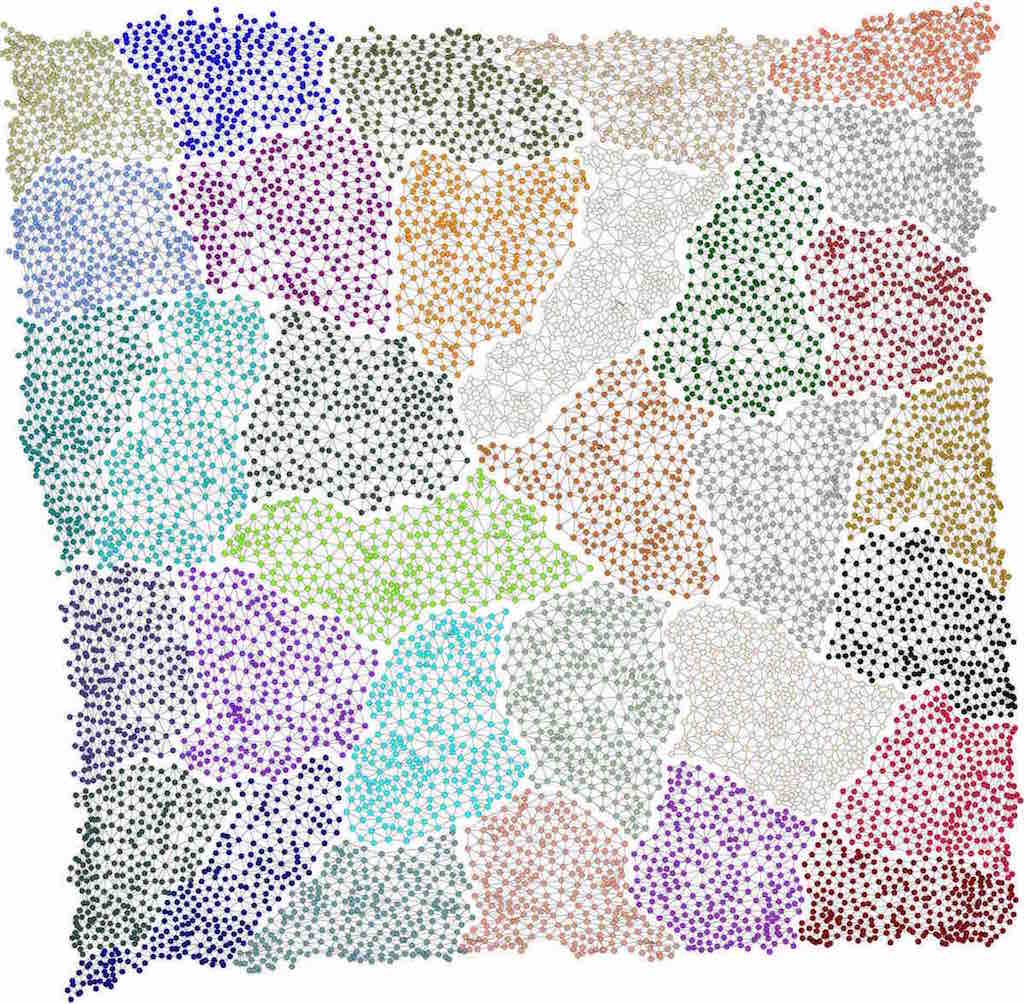}
\caption{An example of 6 levels of a nested partitioning. Clusters are distinguished by different colors. The edges between different clusters are intentionally omitted in this figure for visualization purpose.}
\label{fig:recPart}
\end{figure}
\clearpage

\bibliographystyle{siam}
\bibliography{draft}

\end{document}